\theoremstyle{plain}
\newtheorem{thm}{Theorem}[section]
\newtheorem{lemma}[thm]{Lemma} 
\newtheorem{prop}[thm]{Proposition}
\theoremstyle{remark}
\theoremstyle{definition}
\newtheorem{defi}[thm]{Definition}
\newtheorem{ques}[thm]{Question}
\def\cblu{\color{blue}}
\newcommand\At{{\widetilde A}}
\newcommand\Cpx{{\mathbf C}}
\newcommand\diag{\text{\rm diag}}
\newcommand\Eb{{\mathbb E}}
\newcommand\Ec{{\mathcal{E}}}
\newcommand\Ect{{\widetilde{\Ec}}}
\newcommand\eps{\epsilon}
\newcommand\Ints{{\mathbf Z}}
\newcommand\lspan{\mathrm{span}\,}
\newcommand\Nats{{\mathbf N}}
\newcommand\oup{^{\mathrm o}}
\newcommand\Pc{{\mathcal{P}}}
\newcommand\PC{\operatorname{PC}}
\newcommand\pit{{\tilde\pi}}
\newcommand\Reals{{\mathbf R}}
\newcommand\tr{{\mathrm{tr}}}
\begin{document}

\title[Random Vandermonde matrices]{Asymptotic $*$-moments of some random Vandermonde matrices}

\author[Boedihardjo]{March Boedihardjo$^*$}
\address{M.\ Boedihardjo, Department of Mathematics, Texas A\&M University, College Station, TX 77843-3368, USA}
\email{march@math.tamu.edu}
\thanks{{}$^*$Research supported in part by NSF grant DMS-1301604.
This work is included in the thesis of M. Boedihardjo for partial fulfillment of the requirements to obtain a Ph.D.\ degree
at Texas A\&M University}

\author[Dykema]{Ken Dykema$^\dag$}
\address{K.\ Dykema, Department of Mathematics, Texas A\&M University, College Station, TX 77843-3368, USA}
\email{kdykema@math.tamu.edu}
\thanks{{}$^\dag$Research supported in part by NSF grant DMS-1202660.}

\subjclass[2000]{15B52 (46L54)}
\keywords{Random Vandermonde matrix, R-diagonal element}

\date{July 21, 2017}

\begin{abstract}
Appropriately normalized square random Vandermonde matrices based on independent random variables with uniform
distribution on the unit circle are studied.
It is shown that as the matrix sizes increases without bound,
with respect to the expectation of the trace
there is an asymptotic $*$-distribution, equal to that of a $C[0,1]$-valued R-diagonal element.
\end{abstract}

\maketitle

\section{Introduction}

We consider the random Vandermonde matrix $X_N$, whose $(i,j)$-th entry is $N^{-1/2}\zeta_i^j$,
where $\zeta_1,\ldots,\zeta_N$ are independent with Haar measure distribution on the unit circle.
These have been studied in~\cite{RyanDebbah}, \cite{RyanDebbah11}, \cite{TW11} and~\cite{TW14}
and are of interest for applications in finance, signal array processing, wireless communications
and biology (see~\cite{RyanDebbah} for references).
In~\cite{RyanDebbah}, Ryan and Debbah show that asymptotic moments of $X_{N}^{*}X_{N}$,
(namely, the limits
\[
\lim_{N\to\infty}\Eb\circ\tr((X_N^*X_N)^p),
\]
where $\Eb$ is the expectation and $\tr$ is the normalized trace on matrix algebras),
exist and are given by sums of volumes of certain polytopes.
They also compute some of these asymptotic moments.
In~\cite{TW11}, Tucci and Whiting show among other things that the
asymptotic moments are given by
\[\lim_{N\to\infty}\mathbb{E}\circ\mathrm{tr}((X_{N}^{*}X_{N})^{p})=\int x^{p}\,d\mu(x)\]
for a unique measure $\mu$ on $[0,\infty)$ with unbounded support.
(This uses the Stieltjes solution to the moment problem and a theorem of Carleman --- for the former,
see p.\ 76 of~\cite{Akh}.)
Further results are proved in~\cite{RyanDebbah11} and~\cite{TW14}.

G.\ Tucci asked~\cite{Tucci} whether $X_N$ is asymptotically R-diagonal with respect
to the expectation of the trace.
In this paper, we answer Tucci's question negatively, but show that $X_N$ has an asymptotic $*$-distribution
as $N\to\infty$, which is in fact the $*$-distribution of an element that is R-diagonal over the C$^*$-algebra $C[0,1]$.

To be precise, we show that, for all $n\in\Nats$ and all $\eps(1),\ldots,\eps(n)\in\{1,*\}$,
\[
\lim_{N\to\infty}\Eb\circ\tr(X_N^{\eps(1)}\cdots X_N^{\eps(n)})
\]
exists and we describe this limiting
$*$-moment using the notion of $C[0,1]$-valued R-diagonal\-ity.

Usual (or scalar-valued) R-diagonal elements are very natural in free probability theory,
and have been much studied; they were introduced by Nica and Speicher in~\cite{NS97}.
The algebra-valued version was introduced by {\'S}niady and Speicher in~\cite{SS01}
and has been further studied in~\cite{BD:rdiag}.
We will give the definition from~\cite{BD:rdiag}, which is an easy reformulation of one of the characterizations in~\cite{SS01}.

The setting for algebra-valued R-diagonal elements is a $B$-valued $*$-noncommutative probability space $(A,\Ec)$,
where $B\subseteq A$ is a unital inclusion of unital $*$-algebras and $\Ec:A\to B$ is a conditional expectation,
namely, a $B$-bimodular unital projection.

\begin{defi}\label{def:maxaltptn}
Given $n\in\Nats$ and $\eps=(\eps(1),\ldots,\eps(n))\in\{1,*\}^n$,
we define the {\em maximal alternating interval partition} $\sigma(\eps)$ to be the interval partition
of $\{1,\ldots,n\}$ whose blocks are the maximal interval subsets $I$ of $\{1,\ldots,n\}$ such that if $j\in I$ and $j+1\in I$,
then $\eps(j)\ne\eps(j+1)$.
\end{defi}

For example, if $\eps=\{1,1,*,1,*,*\}$, then $\sigma(\eps)=\{\{1\},\{2,3,4,5\},\{6\}\}$.

\begin{defi}\label{def:rdiag}
An element $a\in A$ is {\em $B$-valued R-diagonal}
if for every integer $k\ge0$ and every $b_1,\ldots,b_{2k}\in B$ we have
\[
\Ec(ab_1a^*b_2ab_3a^*\cdots b_{2k-2}ab_{2k-1}a^*b_{2k}a)=0,
\]
(namely, odd alternating moments vanish)
and, for every integer $n\ge1$, every $\eps\in\{1,*\}^n$ and every choice of $b_1,b_2,\ldots b_n\in B$, we have
\[
\Ec\left(\prod_{I\in\sigma(\eps)}\left(\left(\prod_{j\in I}b_ja^{\eps(j)}\right)-\Ec\left(\prod_{j\in I}b_ja^{\eps(j)}\right)\right)\right)=0,
\]
where in each of the three products above, the terms are taken in order of increasing indices.
\end{defi}

Note that the $B$-valued R-diagonality condition determines all of the $B$-valued $*$-moments
\[
\Ec\big(a^{\eps(1)}b_1a^{\eps(2)}\cdots b_{n-1}a^{\eps(n)}\big)
\]
for $n\in\Nats$, $b_1,\ldots,b_{n-1}\in B$ and arbitrary $\eps(1),\ldots,\eps(n)\in\{1,*\}$,
in terms of the alternating moments of even length, namely those when $n$ is even and $\eps(j)\ne\eps(j+1)$ for all $j$.

\smallskip\noindent
{\bf Contents:}
The contents of the rest of the paper are as follows.
In Section~\ref{sec:altmoms}, we find asymptotics of diagonal entries of
$*$-moments involving alternating $X_N$ and $X_N^*$
with certain deterministic diagonal matrices between.
In Section~\ref{sec:Rdiag}, we prove our main result, characterizing arbitrary asymptotic $*$-moments of $X_N$
based on $C[0,1]$-valued R-diagonality.
In Section~\ref{sec:calcs}, we prove results allowing the asymptotic  alternating $*$-moments of $X_N$
found in Section~\ref{sec:altmoms} to be computed in terms of certain integrals, we show that $X_N$ is not
asymptotically scalar-valued R-diagonal, and we report the results of computations of certain $C[0,1]$-valued cumulants
of the asymptotic $*$-distribution of $X_N$.
(Details of these computations can be found in a Mathematica~\cite{W15} file accompanying the arXived version of this paper.)

\smallskip\noindent
{\bf Notation:}
On matrix algebras, $\mathrm{tr}$ is the normalized trace and $\mathrm{Tr}$ is the usual trace.
For partitions $\pi_1$ and $\pi_2$ of the same set, $\pi_1\vee\pi_2$ means their join in the lattice of all partitions of the set.
We say that a set $S$ {\em splits} a partition $\pi$ if $S$ is the union of some of the blocks of $\pi$.
We write $k_{1}\stackrel{\pi}{\sim}k_{2}$ to mean that $k_{1}$ and $k_{2}$ are in the same block of $\pi$.
The {\em restriction} of a partition $\pi$ to a set $K$ is
the partition $\{S\cap K:S\in\pi\}\backslash\{\emptyset\}$,
and is denoted $\pi\upharpoonright_{K}$.
If $i$ is a function with domain $L$, then $\ker i$ is the partition of $L$ so that $\ell_1$ and $\ell_2$ belong to the same
block of $\ker i$ if and only if $i(\ell_1)=i(\ell_2)$.

\smallskip\noindent
{\bf Acknowledgement:} The authors are grateful to Gabriel Tucci for helpful discussions about random
Vandermonde matrices at an early stage of this research.

\section{Asymptotic alternating $C[0,1]$-valued $*$-moments.}
\label{sec:altmoms}

In this section, we investigate alternating moments in $X_N$ and $X_N^*$.
More specifically, we find the asymptotics of the expectations of diagonal elements of alternating moments of even length,
with certain non-random diagonal matrices interspersed (see Proposition~\ref{prop:diagexpect}).

Let $\tau$ be the tracial state on $C[0,1]$ given by integration with Lebesgue measure.

Given $n\in\Nats$, we let $\Pc(n)$ denote the lattice of all set partitions of $\{1,2,\ldots,n\}$.
Thus $\pi\in\Pc(n)$ if and only if $\pi$ is a collection of disjoint, nonempty sets whose union is $\{1,\ldots,n\}$.
As usual, the elements of $\pi$ are called blocks of the partition, and $|\pi|$ is the number of blocks in the partition.
We will let $S_\pi(j)$ denote the block of $\pi$ that has $j$ as an element.

For $\pi\in\Pc(n)$ and $g_1,\ldots g_n\in C[0,1]$, let $\Gamma_\pi(g_1,\ldots,g_n)\in C[0,1]$ be defined by
\[
\Gamma_\pi(g_1,\ldots,g_n)=
\prod_{j\in S_\pi(1)}g_j\prod_{S\in\pi\backslash\{S_\pi(1)\}}\tau\Big(\prod_{j\in S}g_j\Big)
\]
Thus,
\[
\tau\big(\Gamma_\pi(g_1,\ldots,g_n)\big)=\prod_{S\in\pi}\tau\Big(\prod_{j\in S}g_j\Big)
\]

Given $S\in\pi$, we let $S'=S\backslash\{\max(S)\}$ be $S$ without its largest element
and we let
\begin{equation}\label{eq:J}
J_\pi=\bigcup_{S\in\pi}S'.
\end{equation}
Thus $|J_\pi|=n-|\pi|$.
Naturally, we write $S_\pi'(j)$ for $(S_\pi(j))'$.
For $p\in\{1,\ldots,n\}$ and $S\in\pi$, we write $S\le p$ if and only if  $j\le p$ for every $j\in S$, and if this is not the case, then we
write $S\not\le p$.
We set
\[
I_\pi(p)=\{j\in\{1,\ldots,p\}:S_\pi(j)\not\le p\}
\]
and note $I_\pi(p)\subseteq J_\pi$.

If $ J_\pi=\emptyset$, namely, if $\pi=0_n$ is the partition of $\{1,\ldots,n\}$ into singletons, then we let
\[
\Lambda_\pi(g_1,\ldots,g_{n-1})
=\prod_{j=1}^{n-1}g_j,
\]
where if $n=1$ then we let $\Lambda_\pi()=1$ be the constant function $1$.
Otherwise, if $J_\pi\ne\emptyset$, then
for $t\in[0,1]$ we let
\[
E(\pi,t)=\left\{(t_j)_{j\in J_\pi}\in\Reals^{J_\pi}\;\Bigg|\;\forall p\in\{1,\ldots,n-1\},\,0<t+\sum_{j\in I_\pi(p)}t_j\le 1\right\}.
\]
and we set
\begin{equation}\label{eq:Lambdapi}
\Lambda_\pi(g_1,\ldots,g_{n-1})(t)
=\underset{E(\pi,t)}\int\left(\prod_{p=1}^{n-1}g_p\Big(t+\sum_{j\in I_\pi(p)}t_j\Big)\right)\,d\lambda((t_j)_{j\in J_\pi}),
\end{equation}
where the integral is with respect to $|J_\pi|$-dimensional Lebesgue measure.

The following lemma provides an alternative description of $\Lambda_\pi$ that may be more natural.
It will be used in the proof of Proposition~\ref{prop:diagexpect} and in Section~\ref{sec:calcs}.
\begin{lemma}\label{lem:LambdapiAlt}
Assume $\pi\in\Pc(n)\backslash\{0_n\}$
and let $\Phi_\pi:\Reals^{J_\pi}\times\Reals\to\Reals^n$ be the linear mapping given by
\[
\Phi_\pi((t_j)_{j\in J_\pi},t)=\bigg(t+\sum_{j\in I_\pi(p)}t_j\bigg)_{p=1.}^n
\]
Then $\Phi_\pi$ is an isomorphism onto
the subspace
\begin{equation}\label{eq:Kpi}
K_\pi= \bigg\{(s_p)_{p=1}^n\bigg|\;\forall S\in\pi,\,\sum_{p\in S}s_p-s_{p-1}=0\bigg\}
\end{equation}
of $\Reals^n$, using the convention $s_0=s_n$.
Furthermore, we have
\begin{equation}\label{eq:KZ}
\Phi_\pi(\Ints^{J_\pi}\times\Ints)=K_\pi\cap\Ints^n.
\end{equation}

For each $t\in[0,1]$, let $\Phi_\pi^{(t)}:\Reals^{J_\pi}\to\Reals^n$ be
$\Phi_\pi^{(t)}(\cdot)=\Phi(\cdot,t)$.
Then $\Phi_\pi^{(t)}$ maps $E(\pi,t)$ onto
\[
F(\pi,t):=\{(s_p)_{p=1}^n\in K_\pi\cap(0,1]^n:s_n=t\}.
\]
Moreover, letting ${\cblu\nu_\pi^{(t)}}$  be the push-forward of Lebesgue measure on $\Reals^{J_\pi}$  by $\Phi_\pi^{(t)}$, we have
\begin{equation}\label{eq:intF}
\Lambda_\pi(g_1,\ldots,g_{n-1})(t)=\int_{F(\pi,t)}(g_1\otimes\cdots\otimes g_{n-1}\otimes 1)\,d\nu_\pi^{(t)}.
\end{equation}
\end{lemma}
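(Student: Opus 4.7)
The plan is to prove the linear-algebraic assertions about $\Phi_\pi$ first, then read off \eqref{eq:intF} as a change of variables. The heart of the argument is an explicit formula for the consecutive differences of the coordinates $s_p = t + \sum_{j\in I_\pi(p)}t_j$ of $\Phi_\pi((t_j),t)$. Using the convention $s_0 = s_n$ (which is consistent since $I_\pi(n)=\emptyset$, giving $s_n=t$), I would inspect when the defining condition ``$j\le p$ and $\max S_\pi(j)>p$'' of $I_\pi(p)$ is toggled as $p$ increases by one. A short case analysis yields
\[
s_p - s_{p-1} = \begin{cases} t_p, & p\in J_\pi, \\ -\sum_{j\in S'} t_j, & p=\max S \text{ for some } S\in\pi, \end{cases}
\]
from which $\sum_{p\in S}(s_p-s_{p-1})=0$ for every block $S$ by direct telescoping (the $+t_j$ contributions at $j\in S'$ cancel the $-\sum_{j\in S'}t_j$ at $p=\max S$), placing $\Phi_\pi$ into $K_\pi$.

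Injectivity of $\Phi_\pi$ is then immediate: $\Phi_\pi((t_j),t)=0$ forces $t=s_n=0$, after which each $t_p = s_p - s_{p-1}$ also vanishes. For the dimension count, I would note that the $|\pi|$ defining equations of $K_\pi$ sum (via $s_0=s_n$) to the trivial identity $s_n-s_0=0$ and are otherwise independent---the linear map $(s_p)\mapsto(\sum_{p\in S}(s_p-s_{p-1}))_{S\in\pi}$ surjects onto $\{(a_S):\sum_S a_S=0\}$ by prescribing the differences $s_p-s_{p-1}$ block-by-block---so $\dim K_\pi = n-|\pi|+1 = |J_\pi|+1$ matches the dimension of the domain, making $\Phi_\pi$ a linear isomorphism onto $K_\pi$. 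The integer version \eqref{eq:KZ} follows from the same inversion formula $t=s_n$, $t_p=s_p-s_{p-1}$, which manifestly preserves integrality in both directions.

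For the remaining assertions, since $s_n=t$ holds automatically, the inequality system defining $E(\pi,t)$ reads precisely as $s_p\in(0,1]$ for $p\in\{1,\ldots,n-1\}$, which together with $s_n=t$ is the defining condition of $F(\pi,t)$; so $\Phi_\pi^{(t)}$ restricts to a bijection $E(\pi,t)\to F(\pi,t)$. Then \eqref{eq:intF} is simply the change-of-variables formula for the push-forward $\nu_\pi^{(t)}$ along the injective map $\Phi_\pi^{(t)}$: the integrand $g_1\otimes\cdots\otimes g_{n-1}\otimes 1$ on $F(\pi,t)$ pulls back to $\prod_{p=1}^{n-1}g_p(s_p) = \prod_{p=1}^{n-1}g_p(t+\sum_{j\in I_\pi(p)}t_j)$, which is exactly the integrand of \eqref{eq:Lambdapi}. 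The only non-routine step in the entire argument is the case analysis leading to the difference formula; once that is established, the remainder is bookkeeping and a dimension count.
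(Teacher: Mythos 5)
Your argument is essentially the same as the paper's: establish the difference formula $s_p - s_{p-1}$, deduce $\Phi_\pi(\Reals^{J_\pi}\times\Reals)\subseteq K_\pi$ and injectivity of $\Phi_\pi$, match dimensions, read off the integrality statement from the same inversion formulas, and finish by the change-of-variables/push-forward formula. The only variation is in the dimension count: the paper gets the lower bound $\dim K_\pi\ge n-|\pi|+1$ for free from the injectivity of $\Phi_\pi$ already established, while you compute $\dim K_\pi$ directly by showing the map $(s_p)\mapsto\big(\sum_{p\in S}(s_p-s_{p-1})\big)_{S\in\pi}$ has image exactly $\{(a_S):\sum_S a_S=0\}$; both are correct, the paper's being marginally more economical. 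Otherwise this is the same proof.
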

\begin{proof}
Let $(s_p)_{p=1}^n=\Phi_\pi((t_j)_{j\in J_\pi},t)$.
Since $I_\pi(n)=\emptyset$, we have $s_n=t$.
We will show
\begin{equation}\label{eq:sptp}
\forall p\in\{1,\ldots,n\}\quad
s_p-s_{p-1}=\begin{cases}
t_p,&p\ne\max S_\pi(p) \\
-\sum_{i\in S_\pi(p)\backslash\{p\}}t_i,&p=\max S_\pi(p).
\end{cases}
\end{equation}
Suppose $p\ne\max S_\pi(p)$.
If $p=1$, then $I_\pi(p)=\{1\}$ and 
\[
s_p-s_{p-1}=s_1-s_n=(t+t_1)-t=t_1.
\]
If $p>1$, then $I_\pi(p)=I_\pi(p-1)\cup\{p\}$ and $s_p-s_{p-1}=t_p$.

On the other hand, suppose $p=\max S_\pi(p)$.
If $p=1$, then $S_\pi(p)=\{1\}$ and $I_\pi(p)=\emptyset$ and
\[
s_p-s_{p-1}=s_1-s_n=t-t=0=-\sum_{j\in S_\pi(p)\backslash\{p\}}t_j.
\]
If $p>1$, then
\[
I_\pi(p-1)=I_\pi(p)\cup(S_\pi(p)\backslash\{p\})
\]
and
\[
s_p-s_{p-1}=-\sum_{i\in S_\pi(p)\backslash\{p\}}t_i.
\]
This proves~\eqref{eq:sptp}.

From~\eqref{eq:sptp}, we see immediately $\Phi_\pi(\Reals^{J_\pi}\times\Reals)\subseteq K_\pi$.

We will now show that $\Phi_\pi$ is injective.
Indeed, if $\Phi_\pi((t_j)_{j\in J_\pi})=(s_p)_{p=1}^n=(0)_{p=1}^n$, then $t=s_n=0$ and, for all $p\in J_\pi$, using~\eqref{eq:sptp}, we have $t_p=s_p-s_{p-1}=0$.

Note that $K_\pi$ is the solution space of $|\pi|$ linear equations, but that the sum of all of these linear equations is $0$, so that the dimension
of $K_\pi$ is at most $n-|\pi|+1$.
But, since $\Phi_\pi$ is an injective linear transformation into $K_\pi$, the dimension of $K_\pi$ is at least $|J_\pi|+1=n-|\pi|+1$.
Thus, $\Phi_\pi$ is an isomorphism.

In order to prove~\eqref{eq:KZ}, note that the inclusion $\subseteq$ follows immediately from the definition of $\Phi_\pi$.
The reverse inclusion holds because if $\Phi_\pi((t_j)_{j\in J_\pi},t)=(s_p)_{p=1}^n\in\Ints^n$, then $t=s_n\in\Nats$, while for every $j\in J_\pi$,
by~\eqref{eq:sptp}, $t_j=s_j-s_{j-1}\in\Ints$.

It is now clear that $\Phi_\pi^{(t)}$ maps $E(\pi,t)$ onto $F(\pi,t)$.
It remains only to prove~\eqref{eq:intF}.
From the definition~\eqref{eq:Lambdapi} and the definition of $\Phi_\pi^{(t)}$, we see
\[
\Lambda_\pi(g_1,\ldots,g_{n-1})(t)=\int_{E(\pi,t)}\big((g_1\otimes\cdots\otimes g_{n-1}\otimes 1)\circ\Phi_\pi^{(t)}\big)((t_j)_{j\in J_\pi})\,d\lambda((t_j)_{j\in J_\pi}).
\]
This is, of course, equal to the integral on the right hand side of~\eqref{eq:intF}, by the definition of the push-forward measure.
\end{proof}

\begin{prop}\label{prop:diagexpect}
Let $n\in\Nats$ and suppose $g_1,\ldots,g_{2n}\in C[0,1]$.
Given $N\in\Nats$ and $j\in\{1,\ldots,2n\}$ consider the deterministic $N\times N$ diagonal matrix 
\[
D_N^{(j)}=\diag({\textstyle g_j(\frac 1N),g_j(\frac 2N),\ldots,g_j(\frac NN)}).
\]
For $t\in[0,1]$, let $h_N(t)$ be the least element of $\{1,\ldots,N\}$ so that $t\le h_N(t)/N$.
Then for all $t\in[0,1]$
\begin{multline}\label{eq:diagexpect1}
\lim_{N\to\infty}
\Eb(D_N^{(1)}X_N^*D_N^{(2)}X_ND_N^{(3)}X_N^*D_N^{(4)}X_N\cdots D_N^{(2n-1)}X_N^*D_N^{(2n)}X_N)_{h_N(t),h_N(t)} \\
=\sum_{\pi\in\Pc(n)}g_1(t)\Lambda_\pi(g_3,g_5,\ldots,g_{2n-1})(t)\tau\big(\Gamma_\pi(g_2,g_4,\ldots,g_{2n})\big)
\end{multline}
and
\begin{multline}\label{eq:diagexpect2}
\lim_{N\to\infty}
\Eb(D_N^{(1)}X_ND_N^{(2)}X_N^*D_N^{(3)}X_ND_N^{(4)}X_N^*\cdots D_N^{(2n-1)}X_ND_N^{(2n)}X_N^*)_{h_N(t),h_N(t)} \\
=\sum_{\pi\in\Pc(n)}\Gamma_\pi(g_1,g_3,\ldots,g_{2n-1})(t)\tau\big(\Lambda_\pi(g_2,g_4,\ldots,g_{2n-2})g_{2n}\big)
\end{multline}
Furthermore, in both cases the convergence is uniform for $t\in[0,1]$, and the rate of convergence can be
controlled
in terms of only $\max_i\|g_i\|$ and a common modulus of continuity for $\{g_1,\ldots,g_{2n}\}$.
\end{prop}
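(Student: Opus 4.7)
The plan is to expand the product entry by entry, take expectations using independence of the $\zeta_m$'s and the Haar moment identity $\Eb[\zeta_m^a]=\delta_{a,0}$ for $a\in\Ints$, and recognize the resulting sum as a Riemann sum for the asserted integrals. Setting $h=h_N(t)$, a direct expansion of the product in~\eqref{eq:diagexpect1} gives
\begin{equation*}
(D_N^{(1)}X_N^*\cdots D_N^{(2n)}X_N)_{h,h}
= N^{-n}\sum_{\substack{k_1,\ldots,k_{2n-1}\in\{1,\ldots,N\}\\ k_0=k_{2n}=h}}\Bigl(\prod_{j=1}^{2n}g_j(k_{j-1}/N)\Bigr)\prod_{i=1}^{n}\zeta_{k_{2i-1}}^{k_{2i}-k_{2i-2}},
\end{equation*}
so only the odd-indexed $k$'s carry random factors. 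Taking expectation and grouping the summands by $\pi=\ker(i\mapsto k_{2i-1})\in\Pc(n)$, the expectation vanishes unless for every block $S\in\pi$ the integer constraint $\sum_{i\in S}(k_{2i}-k_{2i-2})=0$ holds, in which case it equals $1$. For each fixed $\pi$, the sum then factors into an odd-$k$ part and an even-$k$ part.

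Next, I would compute both factors asymptotically. The odd-$k$ sum is over $|\pi|$ block representatives $u_S\in\{1,\ldots,N\}$ (distinct, to satisfy $\ker=\pi$; the off-diagonal correction is lower order) and decomposes into a product of one-dimensional Riemann sums, so after normalization by $N^{-|\pi|}$ it converges to $\prod_{S\in\pi}\tau\bigl(\prod_{i\in S}g_{2i}\bigr)=\tau(\Gamma_\pi(g_2,g_4,\ldots,g_{2n}))$. For the even-$k$ sum, substituting $s_p=k_{2p}/N$ turns the block constraints into exactly those defining $K_\pi$ in~\eqref{eq:Kpi}, with boundary values $s_0=s_n=h/N\to t$; by Lemma~\ref{lem:LambdapiAlt} the integer solutions correspond to a lattice in the affine slice $F(\pi,t)$. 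After pulling out $g_1(s_0)\to g_1(t)$ and normalizing by $N^{-|J_\pi|}$, the remaining sum is a Riemann sum for $g_3\otimes g_5\otimes\cdots\otimes g_{2n-1}\otimes 1$ against $\nu_\pi^{(t)}$, which by~\eqref{eq:intF} equals $\Lambda_\pi(g_3,\ldots,g_{2n-1})(t)$. Summing over $\pi$ then yields~\eqref{eq:diagexpect1}.

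Equation~\eqref{eq:diagexpect2} follows by the same scheme, but with the random $\zeta$ factors now attached to the even-indexed $k$'s, so $\pi$ records equalities among $k_0=k_{2n},k_2,\ldots,k_{2n-2}$. The block $S_\pi(1)$ is forced to take the value $h_N(t)$, which in the limit produces exactly the ``distinguished block at $t$'' structure of $\Gamma_\pi(g_1,g_3,\ldots,g_{2n-1})(t)$; the companion odd-$k$ sum runs over $(r_1,\ldots,r_n)\in K_\pi\cap(0,1]^n$ without a fixed endpoint and converges, after slicing by $r_n=t'$, to $\int_0^1 g_{2n}(t')\Lambda_\pi(g_2,\ldots,g_{2n-2})(t')\,dt' = \tau(\Lambda_\pi(g_2,\ldots,g_{2n-2})\,g_{2n})$.

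The main obstacle is making the Riemann-sum convergence uniform in $t\in[0,1]$, with a quantitative rate depending only on $\max_i\|g_i\|_\infty$ and a common modulus of continuity $\omega$ for $\{g_1,\ldots,g_{2n}\}$. Because the domain $F(\pi,t)$ moves with $t$, one must control separately a ``bulk'' approximation error, bounded by a constant depending only on $n$ times $\omega(1/N)\cdot\max_i\|g_i\|_\infty^{2n-1}$, and a ``boundary'' error from lattice points near $\partial F(\pi,t)$, which is supported on a set of $|J_\pi|$-dimensional measure $O(1/N)$ and contributes $O(1/N)\cdot\max_i\|g_i\|_\infty^{2n}$. Both estimates are uniform in $t$ and depend on the $g_i$'s only through the allowed quantities.
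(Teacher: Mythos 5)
Your proposal is correct and follows essentially the same route as the paper: expand the diagonal entry as a sum over indices, kill most terms via $\Eb[\zeta^a]=\delta_{a,0}$, group the survivors by the kernel partition of the $\zeta$-carrying indices, and recognize the two resulting factors (block-representative sum and constrained Riemann sum over $K_\pi$) as converging to $\tau(\Gamma_\pi(\cdots))$ and $\Lambda_\pi(\cdots)$ via Lemma~\ref{lem:LambdapiAlt}, with the asymmetry between \eqref{eq:diagexpect1} and \eqref{eq:diagexpect2} coming from whether the pinned index sits among the $\zeta$-carriers. The bulk/boundary split for uniformity matches the paper's implicit use of Riemann-integration estimates.
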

\begin{proof}
We have
\begin{multline}\label{eq:ExpX*DXhh}
\Eb(D_N^{(1)}X_N^*D_N^{(2)}X_ND_N^{(3)}X_N^*D_N^{(4)}X_N\cdots D_N^{(2n-1)}X_N^*D_N^{(2n)}X_N)_{h_N(t),h_N(t)} \\
=\begin{aligned}[t]
N^{-n}&\sum_{\{(i(1),\ldots,i(2n))\in\{1,\ldots,N\}^{2n}:i(1)=h_N(t)\}}\left(\prod_{j=1}^{2n}g_j(\frac{i(j)}N)\right) \\ 
 &\qquad\qquad\qquad\cdot\Eb(\zeta_{i(2)}^{-i(1)+i(3)}\zeta_{i(4)}^{-i(3)+i(5)}\cdots\zeta_{i(2n-2)}^{-i(2n-3)+i(2n-1)}\zeta_{i(2n)}^{-i(2n-1)+i(1)}).
\end{aligned}
\end{multline}
Let us rearrange the sum by summing first over all partitions $\pi\in\Pc(n)$ and then over all
$i_e=(i(2),i(4),\ldots,i(2n))\in\{1,\ldots,N\}^{n}$ such that $\ker i_e=\pi$, and then over all
$i_o=(i(1),i(3),\ldots,i(2n-1))\in\{1,\ldots,N\}^{n}$ such that $i(1)=h_N(t)$,
where $\ker i_e=\pi$ means that $i(2j_1)=i(2j_2)$ if and only if $j_1$ and $j_2$ are in the same block of $\pi$.
Keeping in mind that the $\zeta_j$ are independent and $\Eb(\zeta_j^k)=0$ if $k\ne0$, we find
that the expectation in~\eqref{eq:ExpX*DXhh} equals
\begin{equation}\label{eq:twosums}
N^{-n}\sum_{\pi\in\Pc(n)}\left(\sum_{\ell}\prod_{S\in\pi}\left(\prod_{j\in S}g_{2j}(\frac{\ell_S}N)\right)\right)
\sum_{i_o\in\Psi_1(\pi,N,h_N(t))}\;\prod_{j=1}^ng_{2j-1}(\frac{i(2j-1)}N),
\end{equation}
where the summation $\sum_{\ell}$
is over all $\ell=(\ell_S)_{S\in\pi}\in\{1,\ldots,N\}^{|\pi|}$ such that $\ell_{S_1}\ne\ell_{S_2}$ if $S_1\ne S_2$,
and $\Psi_1(\pi,N,h_N(t))$ is the set of all $i_o=(i(1),i(3),\ldots,i(2n-1))\in\{1,\ldots,N\}^n$ such that $i(1)=h_N(t)$ and
for all $S\in\pi$,
\[
\sum_{j\in S}-i(2j-1)+i(2j+1)=0,
\]
with the convention $i(2n+1)=i(1)$.
It is straightforward from the theory of Riemann integration to see
\[
\lim_{N\to\infty}N^{-|\pi|}\left(\sum_\ell\prod_{S\in\pi}\left(\prod_{j\in S}g_{2j}(\frac{\ell_S}N)\right)\right)
=\prod_{S\in\pi}\tau\left(\prod_{j\in S}g_{2j}\right)
\]
and that the rate of convergence depends only on $\max_j\|g_{2j}\|$ and on a common modulus of continuity of $\{g_2,g_4,\ldots,g_{2n}\}$

Now we analyze the last summation in~\eqref{eq:twosums}.
Let $\Psi_1\oup(\pi,N,h_N(t))$ be obtained by left rotating each element of $\Psi_1(\pi,N,h_N(t))\}$, i.e., 
\[
\text{from}\quad(i(1),i(3),\ldots,i(2n-1))\quad\text{to}\quad(i(3),i(5),\ldots,i(2n-1),i(1)).
\]
Then, in the notation of Lemma~\ref{lem:LambdapiAlt},
\[
\Psi_1\oup(\pi,N,h_N(t))=K_\pi\cap\{1,\ldots,N\}^n.
\]
Thus, using~\eqref{eq:KZ} and the definition of $\Phi_\pi$, we have
\begin{multline*}
\Phi_\pi^{-1}\big(\Psi_1\oup(\pi,N,h_N(t))\big) \\
=\bigg\{((\ell_j)_{j\in J_\pi},h_N(t))\in\Ints^{J_\pi}\times\{h_N(t)\}\;\bigg|\;
\forall p\in\{1,\ldots,n-1\},\;1\le h_N(t)+\sum_{j\in I_\pi(p)}\ell_j\le N\bigg\}
\end{multline*}
and
\begin{align*}
\Phi_\pi^{-1}\bigg(\frac1N\Psi_1\oup(\pi,N,h_N(t))\bigg)&=\frac1N\Phi_\pi^{-1}\big(\Psi_1\oup(\pi,N,h_N(t))\big) \\
&=\bigg(E\bigg(\pi,\frac{h_N(t)}N\bigg)\cap\frac1N(\Ints^{J_\pi})\bigg)\times\bigg\{\frac{h_N(t)}N\bigg\}.
\end{align*}
Thus, we have
\begin{multline}\label{eq:Psi1sum}
N^{-n+|\pi|}\sum_{(i(1),i(3),\ldots,i(2n-1))\in\Psi_1(\pi,N,h_N(t))}\;\prod_{p=1}^ng_{2p-1}\bigg(\frac{i(2p-1)}N\bigg) \\
=N^{-n+|\pi|}g_1\bigg(\frac{h_N(t)}N\bigg)\sum_{(r_j)_{j\in J_\pi}}\;\prod_{p=2}^ng_{2p-1}\bigg(\frac{h_N(t)}N+\sum_{j\in I_\pi(p)}r_j\bigg),
\end{multline}
where the sum is over all $(r_j)_{j\in J_\pi}\in E(\pi,\frac{h_N(t)}N)\cap\frac1N(\Ints^{J_\pi})$.
Since the $g_j$ are continuous, the right hand side of~\eqref{eq:Psi1sum} is for large $N$ a good approximation for the integral
\[
g_1\bigg(\frac{h_N(t)}N\bigg)\int_{E(\pi,\frac{h_N(t)}N)}\left(\prod_{p=2}^ng_{2p-1}\bigg(\frac{h_N(t)}N+\sum_{j\in I_\pi(p)}t_j\bigg)\right)
\,d\lambda((t_j)_{j\in J_\pi}).
\]
In particular, since also $\lim_{N\to\infty}\frac{h_N(t)}N=t$, we have
\begin{multline*}
\lim_{N\to\infty}N^{-n+|\pi|}\sum_{(i(1),i(3),\ldots,i(2n-1))\in\Psi_1(\pi,N,h_N(t))}\;\prod_{p=1}^ng_{2p-1}\bigg(\frac{i(2p-1)}N\bigg) \\
=g_1(t)\Lambda_\pi(g_3,g_5,\ldots,g_{2n-1}),
\end{multline*}
with the rate of convergence depending only on $\max(\|g_{2j-1}\|)$ and a common modulus
of continuity for $\{g_1,g_3,\ldots,g_{2n-1}\}$.
This proves~\eqref{eq:diagexpect1}, with the desired statement on the rate of convergence.

We prove~\eqref{eq:diagexpect2} similarly.
We have
\begin{multline*}
\Eb(D_N^{(1)}X_ND_N^{(2)}X_N^*D_N^{(3)}X_ND_N^{(4)}X_N^*\cdots D_N^{(2n-1)}X_ND_N^{(2n)}X_N^*)_{h_N(t),h_N(t)} \\
=\begin{aligned}[t]
N^{-n}&\sum_{\{(i(1),\ldots,i(2n))\in\{1,\ldots,N\}^{2n}:i(1)=h_N(t)\}}\left(\prod_{j=1}^{2n}g_j(\frac{i(j)}N)\right) \\ 
 &\qquad\qquad\qquad\cdot\Eb(\zeta_{i(1)}^{-i(2n)+i(2)}\zeta_{i(3)}^{-i(2)+i(4)}\cdots\zeta_{i(2n-1)}^{-i(2n-2)+i(2n)}). 
\end{aligned}
\end{multline*}
The right-hand-side can be rewritten
\[
N^{-n}\sum_{\pi\in\Pc(n)}\left(\sum_\ell\prod_{S\in\pi}\left(\prod_{j\in S}g_{2j-1}(\frac{\ell_S}N)\right)\right)
\sum_{i_e\in\Psi_2(\pi,N)}\;\prod_{j=1}^ng_{2j}(\frac{i(2j)}N),
\]
where
the summation $\sum_{\ell}$
is over all $\ell=(\ell_S)_{S\in\pi}\in\{1,\ldots,N\}^{|\pi|}$ such that $\ell_{S_\pi(1)}=h_N(t)$ and $\ell_{S_1}\ne\ell_{S_2}$ if $S_1\ne S_2$,
while
\[
\Psi_2(\pi,N)
=\bigg\{i_e=(i(2),i(4),\ldots,i(2n))\in\{1,\ldots,N\}^n\;\bigg|\;
\forall S\in\pi,\;\sum_{j\in S}i(2j)-i(2j-2)=0\bigg\},
\]
with the convention $i(0)=i(2n)$.
We see
\[
\lim_{N\to\infty}N^{-|\pi|+1}\sum_\ell\prod_{S\in\pi}\left(\prod_{j\in S}g_{2j-1}(\frac{\ell_S}N)\right)
=\Gamma_\pi(g_1,g_3,\ldots,g_{2n-1})(t).
\]

We have $\Psi_2(\pi,N)=K_\pi\cap\{1,\ldots,N\}^n$ and
\[ 
\Phi_\pi^{-1}(\Psi_2(\pi,N))=
\bigg\{((\ell_j)_{j\in J_\pi},m)\in\Ints^{J_\pi}\times\Ints\;\bigg|\;
\forall p\in\{1,\ldots,n-1\},\;1\le m+\sum_{j\in I_\pi(p)}\ell_j\le N\bigg\}
\] 
and
\begin{equation}\label{eq:PhiInvPsi2}
\Phi_\pi^{-1}\bigg(\frac1N\Psi_2(\pi,N)\bigg)=\frac1N\Phi_\pi^{-1}\big(\Psi_2(\pi,N)\big)
=\bigcup_{m=1}^N\bigg(E\bigg(\pi,\frac mN\bigg)\cap\frac1N(\Ints^{J_\pi})\bigg)\times\bigg\{\frac mN\bigg\}.
\end{equation}
Thus, we have
\[ 
\sum_{(i(2),i(4),\ldots,i(2n))\in\Psi_2(\pi,N)}\;\prod_{p=1}^ng_{2p}\bigg(\frac{i(2p)}N\bigg)
=\sum_{((r_j)_{j\in J_\pi},x)}\;\prod_{p=1}^ng_{2p}\bigg(x+\sum_{j\in I_\pi(p)}r_j\bigg),
\]
where the sum is over all $((r_j)_{j\in J_\pi},x)$ in the set on the right of~\eqref{eq:PhiInvPsi2}.
Thus, using $I_n(\pi)=\emptyset$, we find
\begin{multline*}
\lim_{N\to\infty}N^{-n+|\pi|-1}\sum_{(i(2),i(4),\ldots,i(2n))\in\Psi_2(\pi,N)}\;\prod_{p=1}^ng_{2p}\left(\frac{i(2p)}N\right) \\
\begin{aligned}
&=\int_0^1\bigg(\int_{E(\pi,s)}\bigg(\prod_{p=1}^{n-1}g_{2p}\bigg(s+\sum_{j\in I_\pi(p)}t_j\bigg)\bigg)\,d\lambda((t_j)_{j\in J_\pi})\bigg)\,ds \\
&=\int_0^1\Lambda_\pi(g_2,g_4,\ldots,g_{2n-2})(s)g_{2n}(s)\,ds
\end{aligned}
\end{multline*}
and that the rate of convergence depends only on $\max_{1\le j\le n}\|g_{2j}\|$ and on a common modulus of continuity of $\{g_2,g_4,\ldots,g_{2n}\}$.
This proves~\eqref{eq:diagexpect2}, with the desired statement on the rate of convergence.
\end{proof}

\section{$C[0,1]$-valued R-diagonality}
\label{sec:Rdiag}

In this section, we prove our main theorem (Theorem~\ref{thm:XRDiag}) about asymptotic $*$-moments of random Vandermonde matrices.
It will follow from Proposition~\ref{prop:diagexpect} above, about alternating moments, and the next proposition.
\begin{prop}\label{OneProp}
Let $n\geq 1$. Let $\epsilon_{1},\ldots,\epsilon_{n}\in\{1,*\}$. Let $\sigma\in\Pc(n)$ be the
corresponding maximal alternating interval partition
(see Definition~\ref{def:maxaltptn}).
Let $d_{1},\ldots,d_{n}$ be deterministic diagonal $N\times N$ matrices of norm at most 1. Then
\[\left|\mathbb{E}\circ\mathrm{tr}\prod_{I\in\sigma}\left(\prod_{k\in I}d_{k}X_{N}^{\epsilon_{k}}-\mathbb{E}\circ\mathrm{diag}\left(\prod_{k\in I}d_{k}X_{N}^{\epsilon_{k}}\right)\right)\right|\leq\frac{C}{\sqrt{N}},\]
where $C$ depends only on $n$.
\end{prop}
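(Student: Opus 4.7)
The strategy is a direct entry-wise expansion, following the combinatorial approach used in the proof of Proposition~\ref{prop:diagexpect}, but now tracking the cancellation enforced by the centerings $\tilde A_I := \prod_{k\in I}d_k X_N^{\epsilon_k} - \mathbb{E}\circ\mathrm{diag}(\prod_{k\in I}d_k X_N^{\epsilon_k})$. First, by multilinearity I would write
\[
\prod_{I\in\sigma}\tilde{A}_I \;=\; \sum_{S\subseteq\sigma}(-1)^{|S|}\prod_{I\in\sigma}M_I^{(S)},
\]
where $M_I^{(S)} = \mathbb{E}\circ\mathrm{diag}(\prod_{k\in I}d_k X_N^{\epsilon_k})$ for $I\in S$ and $M_I^{(S)} = \prod_{k\in I}d_k X_N^{\epsilon_k}$ otherwise. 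For a given $S$, the factors indexed by $S$ are deterministic diagonal matrices, which inside the trace forces the adjacent "outer" indices to coincide.

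Next I would expand each surviving $X_N^{\epsilon_k}$ using $(X_N)_{p,q} = N^{-1/2}\zeta_p^q$ and $(X_N^*)_{p,q} = N^{-1/2}\zeta_q^{-p}$ and take expectation. Independence of the $\zeta_j$ together with $\mathbb{E}[\zeta_j^k]=\delta_{k,0}$ restricts the surviving configurations to those for which, at each distinct value of the "$\zeta$-row index," the signed sum of the exponents vanishes. Grouping positions by this coincidence relation yields a partition $\pi$ of the set of positions; for each compatible $\pi$, the number of integer configurations in $\{1,\ldots,N\}^n$ is $O(N^{D(\pi)})$, where $D(\pi)$ is the dimension of the associated polytope (the analogue of $K_\pi$ from Lemma~\ref{lem:LambdapiAlt}).

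The key cancellation is as follows. "Intra-block" partitions $\pi$, those whose blocks each lie entirely inside a single $I\in\sigma$, contribute a quantity that factorizes over $\sigma$ and matches exactly (for each factor) the corresponding $\mathbb{E}\circ\mathrm{diag}(\prod_{k\in I}d_kX_N^{\epsilon_k})$ contribution; this is the content of the calculation in the proof of Proposition~\ref{prop:diagexpect} applied intra-block. In the signed sum over $S\subseteq\sigma$ above, the intra-block contributions therefore cancel entirely, by a standard inclusion--exclusion argument. What remains are the "cross-block" contributions, i.e.\ those coming from partitions $\pi$ in which at least one block intersects more than one interval of $\sigma$.

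Finally, each cross-block partition $\pi$ imposes at least one extra linear relation coupling indices from different $I$'s, dropping the polytope dimension by at least one relative to the naive leading case. Combined with the normalization $N^{-n/2}$ from the $X_N^{\epsilon_k}$ factors and $1/N$ from the trace, and using that $D(\pi)$ is an integer (so the best scaling compatible with the cross-block constraint is $N^{\lfloor n/2+1/2\rfloor-n/2-1}\le N^{-1/2}$), each such contribution is $O(N^{-1/2})$; since the total number of partitions is bounded in terms of $n$, summing yields the desired bound with $C=C(n)$. The main obstacle will be the exact matching in the intra-block cancellation: one needs to check that, for every subset $S$, the intra-block partition weight on $\prod_I M_I^{(S)}$ matches term-by-term the corresponding piece of the full expansion of $\mathbb{E}\circ\mathrm{tr}(\prod_I A_I)$, so that the alternating sum eliminates them precisely; once this is established, the dimension bound for the cross-block remainder follows routinely from counting the linear constraints.
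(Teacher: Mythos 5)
Your high-level strategy — expand entry-wise, identify a combinatorial cancellation, and show that what survives is $O(N^{-1/2})$ — is in the right spirit, but as written it has two genuine gaps, and the paper's proof proceeds quite differently, in a way that avoids both.

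First, the inclusion--exclusion cancellation you propose is exactly the part you flag as "the main obstacle," and it is in fact the hard part; you cannot treat it as a black box. The difficulty is that even when a coincidence partition $\pi$ is ``intra-block'' in your sense, the contribution of a fixed $\pi$ to $\mathbb{E}\circ\mathrm{tr}(\prod_I A_I)$ does not obviously factor over $\sigma$, because the cyclic trace couples the outer row/column indices of adjacent blocks, and because the diagonal projection $\mathbb{E}\circ\mathrm{diag}$ is \emph{not} the same as $\mathbb{E}$ of an alternating product --- Lemmas~\ref{Odd}, \ref{*1} and \ref{1*} show these agree only up to an error which is itself of size $O(N^{-1/2})$ in the right norm. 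The paper sidesteps your inclusion--exclusion bookkeeping entirely: it first proves Lemma~\ref{OneLem}, the version where each centering is by $\mathbb{E}$ rather than $\mathbb{E}\circ\mathrm{diag}$, and there it replaces your cancellation argument by Lemma~\ref{P2}, which bounds $|\mathbb{E}\prod(h_i-\mathbb{E}h_i)|\le\mathbb{E}\prod h_i$ term by term, together with a sharp dichotomy: either (Case~I) a dimension count via Lemmas~\ref{P7}, \ref{P8}, \ref{P11}, \ref{P12} gives the $O(\sqrt{N})$ total, or (Case~II) Lemma~\ref{P10} produces a block that splits $\pi$ and then Lemmas~\ref{P13}--\ref{P14} show that one of the factors $\prod_{k\in S_0}(X_N^{\epsilon_k})_{i(k),i(k+1)}$ is probabilistically \emph{independent} of the others, so the centered expectation vanishes identically. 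That exact-vanishing-by-independence mechanism is not captured by your intra/cross dichotomy, and it is what makes the remainder estimate clean. Only afterwards does the paper pass from $\mathbb{E}$ to $\mathbb{E}\circ\mathrm{diag}$, via Lemma~\ref{Offdiag} (off-diagonal entries of $\mathbb{E}Z_{N,I}$ are $O(1/N)$ entry-wise, hence $O(N^{-1/2})$ in $|\cdot|_{2}$) and the perturbation Lemma~\ref{P19}.

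Second, your dimension-count for ``cross-block'' $\pi$ is a heuristic, not a proof. The assertion that a cross-block constraint ``drops the polytope dimension by at least one relative to the naive leading case,'' and the ad hoc bound $N^{\lfloor n/2+1/2\rfloor - n/2 -1}\le N^{-1/2}$, are not substantiated; the actual bound requires the specific linear-algebra lemmas (\ref{P6}, \ref{P7}) relating the number of admissible index configurations to $|\pi|-\dim\lspan\{\sum_{k\in S}v_k:S\in\pi_1\}$, and then the partition-lattice identity of Lemma~\ref{P8} to turn this into the quantity $|\pi\vee(\pi_2\cup\cdots)|$ that drives the Case~I/Case~II split. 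Without that machinery (or an equivalent substitute), the $O(N^{-1/2})$ remainder bound does not follow ``routinely.'' In short: the skeleton is plausible, but both load-bearing steps — the exact cancellation and the remainder bound — need proofs, and the paper's route (bound centered by uncentered, then split into a dimension-count case and an independence case, then compare $\mathbb{E}$ with $\mathbb{E}\circ\mathrm{diag}$ separately) is a materially different and more tractable path.
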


We begin the proof with some preliminaries.
The following lemma can be proved using Gaussian elimination, for instance.
\begin{lemma}\label{P3}
Let $p\geq 1$. Let $V$ be a subspace of $\mathbb{R}^{p}$. Let $t\in\mathbb{R}^{p}$. Then
\[|\{j\in\{1,\ldots,N\}^{\{1,\ldots,p\}}\cap(t+V)|\leq N^{\dim V}.\]
\end{lemma}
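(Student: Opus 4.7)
The plan is to show that any affine subspace $t + V$ of $\mathbb{R}^p$ of dimension $d := \dim V$ is parametrized by its values on some distinguished set of $d$ coordinates; once this is established, the bound $N^d$ drops out by counting the possibilities for those coordinates.

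First I would select a coordinate subset $I \subseteq \{1,\ldots,p\}$ with $|I| = d$ such that the projection $\pi_I : V \to \mathbb{R}^I$ defined by $v \mapsto (v_i)_{i \in I}$ is a linear isomorphism. To produce $I$, fix any basis $v_1,\ldots,v_d$ of $V$ and form the $p \times d$ matrix $M$ whose columns are $v_1,\ldots,v_d$. Since $\mathrm{rank}\,M = d$, Gaussian elimination applied to $M^T$ (or, equivalently, row reduction of $M$ combined with selection of pivot rows) identifies $d$ rows of $M$ that are linearly independent; let $I$ be their index set. The $d \times d$ submatrix $M_I$ is then invertible, so $\pi_I|_V$ is an isomorphism onto $\mathbb{R}^I$.

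Next I would use this to count integer points. Given $j \in (t+V) \cap \{1,\ldots,N\}^p$, write $j = t + v$ with $v \in V$; then $\pi_I(v) = (j_i - t_i)_{i \in I}$, and since $\pi_I|_V$ is injective, $v$ (and hence $j$) is determined by $(j_i)_{i \in I}$. Thus the restriction map $j \mapsto (j_i)_{i \in I}$ is injective on $(t+V) \cap \{1,\ldots,N\}^p$, mapping into $\{1,\ldots,N\}^I$, a set of cardinality $N^d$, yielding the claimed inequality.

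There is essentially no obstacle here; the argument is a routine pivot-selection in linear algebra. The only point worth flagging is that we must realize $V$ as a graph over a subset of the \emph{standard} coordinate directions rather than over an arbitrary $d$-dimensional subspace, which is exactly what Gaussian elimination on the basis matrix delivers.
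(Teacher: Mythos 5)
Your proof is correct, and it takes exactly the approach the paper gestures at: the paper gives no detailed argument, only the remark that the lemma ``can be proved using Gaussian elimination, for instance,'' and your pivot-selection argument is a careful implementation of precisely that idea.
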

Lemma \ref{P3} can be reformulated follows.
\begin{lemma}\label{P4}
Let $p,r\geq 1$. Let $w_{1},\ldots,w_{r}\in\mathbb{R}^{p}$. Let $m_{1},\ldots,m_{r}\in\mathbb{R}$. Then
\[|\{j\in\{1,\ldots,N\}^{\{1,\ldots,p\}}:j\cdot w_{s}=m_{s}\;\forall 1\leq s\leq r\}|\leq N^{p-\dim\lspan\{w_{1},\ldots,w_{r}\}}.\]
\end{lemma}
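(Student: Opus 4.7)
Lemma \ref{P4} is an immediate reformulation of Lemma \ref{P3}: it replaces the parametric description of an affine subspace $t + V$ by its presentation as the solution set of linear equations. The plan is to make this translation explicit and then invoke Lemma \ref{P3}.

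First, I would set $V := \{v \in \mathbb{R}^p : v \cdot w_s = 0 \text{ for all } 1 \leq s \leq r\}$, so that $V$ is the orthogonal complement (in the standard inner product) of $\lspan\{w_1, \ldots, w_r\}$ in $\mathbb{R}^p$. By the standard orthogonal decomposition, $\dim V = p - \dim \lspan\{w_1, \ldots, w_r\}$. If the set on the left-hand side of Lemma \ref{P4} is empty, the stated bound holds trivially, so I would assume there is at least one element $t$ of this set, for which $t \cdot w_s = m_s$ for each $s$.

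For an arbitrary $j \in \{1, \ldots, N\}^p$, the conditions $j \cdot w_s = m_s$ for $1 \leq s \leq r$ are then equivalent to $(j - t) \cdot w_s = 0$ for each $s$, that is, to $j - t \in V$, i.e., $j \in t + V$. Hence the set in Lemma \ref{P4} equals $(t + V) \cap \{1, \ldots, N\}^p$, and Lemma \ref{P3} bounds its cardinality by $N^{\dim V} = N^{p - \dim \lspan\{w_1, \ldots, w_r\}}$, as required. There is no real obstacle: this is purely a dictionary between two equivalent descriptions of an affine subspace, and the only small points to verify are the handling of the empty case and the dimension identity, both of which are routine.
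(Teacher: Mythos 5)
Your proof is correct and takes exactly the route the paper intends: the paper simply states that Lemma~\ref{P4} is a reformulation of Lemma~\ref{P3} without spelling out the dictionary, and you have supplied precisely that translation (taking $V$ to be the common kernel of the $w_s$, noting $\dim V = p - \dim\lspan\{w_1,\ldots,w_r\}$, handling the empty case, and identifying the solution set with $(t+V)\cap\{1,\ldots,N\}^p$). Nothing to add.
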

\begin{lemma}\label{P1}
Let $\zeta_{1},\ldots,\zeta_{N}$ be independent random variables uniformly distributed on the unit circle. Let $h$ be a product of the random variables $\zeta_{1},\ldots,\zeta_{N}$ and their inverses, possibly with repetitions. Then
\[\mathbb{E}h=\begin{cases}\begin{array}{cc}1,&h=1\\0,&h\neq 1\end{array}.\end{cases}\]
\end{lemma}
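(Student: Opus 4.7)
The plan is to reduce the computation to the independent one-variable case and use the standard moment formula for the uniform distribution on the unit circle.

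First I would rewrite $h$ in canonical form. A priori, $h$ is given as a word in the letters $\zeta_1,\zeta_1^{-1},\ldots,\zeta_N,\zeta_N^{-1}$, but since the $\zeta_j$ commute, we may collect powers and write
\[
h=\prod_{j=1}^{N}\zeta_{j}^{k_{j}}
\]
for a unique tuple of integer exponents $(k_{1},\ldots,k_{N})\in\mathbb{Z}^{N}$, obtained by subtracting the number of occurrences of $\zeta_{j}^{-1}$ from the number of occurrences of $\zeta_{j}$ in the word.

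Next I would use independence. Since the random variables $\zeta_{1},\ldots,\zeta_{N}$ are independent, so are the random variables $\zeta_{1}^{k_{1}},\ldots,\zeta_{N}^{k_{N}}$, and hence
\[
\mathbb{E}h=\prod_{j=1}^{N}\mathbb{E}[\zeta_{j}^{k_{j}}].
\]
Each factor is computed directly from the definition of Haar measure on the unit circle:
\[
\mathbb{E}[\zeta_{j}^{k}]=\frac{1}{2\pi}\int_{0}^{2\pi}e^{ik\theta}\,d\theta=\begin{cases}1,&k=0,\\0,&k\neq 0.\end{cases}
\]
Therefore $\mathbb{E}h$ equals $1$ if every $k_{j}=0$ and equals $0$ otherwise.

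Finally I would check the dichotomy stated in the lemma. If every $k_{j}=0$, then $h=\prod_{j}\zeta_{j}^{0}=1$. Conversely, if some $k_{j_{0}}\neq 0$, then $h\neq 1$ as a random variable: fixing the remaining $\zeta_{i}$ (for $i\neq j_0$) arbitrarily, $h$ ranges over $\{c\,\zeta_{j_{0}}^{k_{j_{0}}}:\zeta_{j_{0}}\in\mathbb{T}\}$ for some constant $c\neq 0$, which is a full copy of the unit circle traversed $|k_{j_{0}}|$ times and hence is not almost surely equal to $1$. Thus the condition ``$h=1$'' is equivalent to ``all $k_{j}=0$'', and the two cases of the formula are established. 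There is no real obstacle here; this lemma is truly elementary and the only thing to be careful about is that ``$h=1$'' refers to $h$ being identically the constant random variable $1$, which matches the vanishing condition on the exponents.
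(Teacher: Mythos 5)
Your proof is correct and uses essentially the same argument as the paper: write $h$ in the canonical form $\prod_j \zeta_j^{k_j}$, invoke independence to factor the expectation, and use $\mathbb{E}[\zeta_j^{k}]=0$ for $k\neq 0$. You simply spell out the single-variable computation and the dichotomy ``$h=1$ iff all exponents vanish'' more explicitly than the paper does.
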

\begin{proof}
Obviously if $h=1$ then $\mathbb{E}h=1$. If $h\neq 1$ then we write $h=\prod_{i=1}^{N}\zeta_{i}^{j(i)}$ where $j(i_{0})\neq 0$ for some $1\leq i_{0}\leq N$. Thus, by independence of $\zeta_{1},\ldots,\zeta_{N}$, we have $\mathbb{E}h=(\mathbb{E}\prod_{i\neq i_{0}}\zeta_{i}^{j(i)})(\mathbb{E}\zeta_{i_{0}}^{j(i_{0})})=0$.
\end{proof}
\begin{lemma}\label{P2}
Let $\zeta_{1},\ldots,\zeta_{N}$ be independent random variables uniformly distributed on the unit circle. Let $h_{1},\ldots,h_{r}$ be products of the random variables $\zeta_{1},\ldots,\zeta_{N}$ and their inverses, possibly with repetition. Then
\[|\mathbb{E}(h_{1}-\mathbb{E}h_{1})\cdots(h_{r}-\mathbb{E}h_{r})|\leq\mathbb{E}h_{1}\cdots h_{r}.\]
\end{lemma}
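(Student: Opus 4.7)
The plan is to exploit Lemma \ref{P1}, which tells us that for any $h$ of the given form, $\mathbb{E}h\in\{0,1\}$, with $\mathbb{E}h=1$ precisely when $h=1$ as a formal product of powers. So each factor $(h_i - \mathbb{E}h_i)$ either simplifies dramatically or vanishes, and the left-hand side collapses in both cases.

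Concretely, I would split into two cases. First, if some $h_i=1$, then $\mathbb{E}h_i=1$ as well, so the factor $(h_i-\mathbb{E}h_i)$ is identically zero, making the left-hand side equal to zero. Since $h_1\cdots h_r$ is itself a product of the $\zeta_k$ and their inverses, Lemma \ref{P1} gives $\mathbb{E}(h_1\cdots h_r)\in\{0,1\}\subseteq[0,\infty)$, so the inequality holds trivially. Second, if $h_i\neq 1$ for every $i$, then $\mathbb{E}h_i=0$ for every $i$ by Lemma \ref{P1}, so
\[
(h_1-\mathbb{E}h_1)\cdots(h_r-\mathbb{E}h_r)=h_1\cdots h_r,
\]
and therefore the left-hand side equals $|\mathbb{E}(h_1\cdots h_r)|$. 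Again by Lemma \ref{P1} the right-hand side $\mathbb{E}(h_1\cdots h_r)$ is a nonnegative integer (in fact $0$ or $1$), so the two sides are equal and the inequality holds with equality.

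There is no real obstacle here: once one observes that the only values $\mathbb{E}h_i$ can take are $0$ and $1$, the case analysis is immediate. The only point worth double-checking is the interpretation of the right-hand side as $\mathbb{E}(h_1\cdots h_r)$ rather than $\prod_i\mathbb{E}h_i$, and the fact that this product of monomials $h_1\cdots h_r$ is again of the form covered by Lemma \ref{P1}, so that nonnegativity of the right-hand side is automatic.
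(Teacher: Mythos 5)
Your proof is correct and follows essentially the same two-case argument as the paper, splitting on whether some $h_i=1$ and applying Lemma~\ref{P1} in each case. The only difference is that you spell out the nonnegativity of the right-hand side slightly more explicitly, which the paper leaves implicit.
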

\begin{proof}
If $h_{i}=1$ for some $1\leq i\leq r$ then
\[|\mathbb{E}(h_{1}-\mathbb{E}h_{1})\cdots(h_{r}-\mathbb{E}h_{r})|=0.\]
If $h_{i}\neq 1$ for all $1\leq i\leq r$ then by Lemma \ref{P1}, $\mathbb{E}h_{i}=0$ for all $1\leq i\leq r$ so
\[|\mathbb{E}(h_{1}-\mathbb{E}h_{1})\cdots(h_{r}-\mathbb{E}h_{r})|=|\mathbb{E}h_{1}\cdots h_{r}|=\mathbb{E}h_{1}\cdots h_{r}.\]
\end{proof}
\begin{lemma}\label{P5}
Let $\zeta_{1},\ldots,\zeta_{N}$ be independent random variables uniformly distributed on the unit circle. Let $h$ be a product of the random variables $\zeta_{1},\ldots,\zeta_{N}$ and their inverses, possibly with repetition.
Let $r\geq 1$. Let $i(1),\ldots,i(r)\in\{1,\ldots,N\}$ be distinct. Then there exists $m_{1},\ldots,m_{r}\in\mathbb{Z}$ such that if $n_{1},\ldots,n_{r}\in\mathbb{Z}$ satisfies
\[\mathbb{E}h\zeta_{i(1)}^{n_{1}}\cdots\zeta_{i(r)}^{n_{r}}\neq 0\]
then $n_{s}=m_{s}$ for all $1\leq s\leq r$.
\end{lemma}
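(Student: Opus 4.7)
The plan is to deduce this directly from Lemma~\ref{P1}. Since $h$ is a product of the $\zeta_j$ and their inverses, we may write
\[
h=\prod_{j=1}^N \zeta_j^{a(j)}
\]
for some integer exponents $a(1),\ldots,a(N)\in\Ints$ (collecting together repeated occurrences). Then for any $n_1,\ldots,n_r\in\Ints$,
\[
h\,\zeta_{i(1)}^{n_1}\cdots\zeta_{i(r)}^{n_r}=\prod_{j=1}^N \zeta_j^{b(j)},
\]
where $b(i(s))=a(i(s))+n_s$ for $1\le s\le r$ (this uses that the $i(s)$ are distinct, so no cancellation or combining occurs between the terms $\zeta_{i(s)}^{n_s}$), and $b(j)=a(j)$ for $j\notin\{i(1),\ldots,i(r)\}$.

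By Lemma~\ref{P1}, the expectation of this product is nonzero if and only if the product equals the constant $1$, i.e., if and only if $b(j)=0$ for every $j\in\{1,\ldots,N\}$. In particular, one must have $a(i(s))+n_s=0$ for each $s\in\{1,\ldots,r\}$, forcing $n_s=-a(i(s))$. Thus setting
\[
m_s=-a(i(s)),\qquad 1\le s\le r,
\]
gives the desired values, and the conclusion of the lemma follows.

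There is no real obstacle: the only subtlety is recording that distinctness of the $i(s)$ ensures that the exponents of the distinguished variables in $h\,\zeta_{i(1)}^{n_1}\cdots\zeta_{i(r)}^{n_r}$ decouple into the $r$ independent constraints $a(i(s))+n_s=0$, rather than producing a single aggregated constraint among the $n_s$'s.
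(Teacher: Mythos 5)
Your argument is exactly the paper's: write $h=\prod_j\zeta_j^{a(j)}$, observe that $\mathbb{E}\,h\zeta_{i(1)}^{n_1}\cdots\zeta_{i(r)}^{n_r}\neq0$ forces (via Lemma~\ref{P1}) the combined exponent at each $\zeta_{i(s)}$ to vanish, and set $m_s=-a(i(s))$. The paper states this in one line; you have merely spelled out the intermediate steps, including the role of distinctness, which the paper leaves implicit.
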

\begin{proof}
We write $h$ as $\prod_{i=1}^{N}\zeta_{i}^{j(i)}$. Then the result follows from Lemma~\ref{P1},  by taking $m_{s}=-j(i(s))$ for $1\leq s\leq r$.
\end{proof}
Combining Lemma \ref{P4} and Lemma \ref{P5}, we obtain
\begin{lemma}\label{P6}
Let $p,r\geq 1$. Let $w_{1},\ldots,w_{r}\in\mathbb{R}^{p}$. Let $\zeta_{1},\ldots,\zeta_{N}$ be independent random variables uniformly distributed on the unit circle. Let $h$ be a product of the random variables $\zeta_{1},\ldots,\zeta_{N}$ and their inverses,
possibly with repetition. Let $i(1),\ldots,i(r)\in\{1,\ldots,N\}$ be distinct. Then
\[\big|\{j\in\{1,\ldots,N\}^{\{1,\ldots,p\}}:\mathbb{E}h\zeta_{i(1)}^{j\cdot w_{1}}\cdots\zeta_{i(r)}^{j\cdot w_{r}}\neq 0\}\big|\leq N^{p-\dim\lspan\{w_{1},\ldots,w_{r}\}}.\]
Equivalently, by Lemma \ref{P1},
\[\sum_{j:\{1,\ldots,p\}\to\{1,\ldots,N\}}|\mathbb{E}h\zeta_{i(1)}^{j\cdot w_{1}}\cdots\zeta_{i(r)}^{j\cdot w_{r}}|\leq N^{p-\dim\lspan\{w_{1},\ldots,w_{r}\}}.\]
\end{lemma}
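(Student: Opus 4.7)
The plan is to read the lemma exactly as advertised in the text preceding it, i.e.\ as a routine combination of Lemmas~\ref{P4} and~\ref{P5}, with the last sentence then following from Lemma~\ref{P1}.

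First I would apply Lemma~\ref{P5} to obtain integers $m_{1},\ldots,m_{r}\in\Ints$ (depending on $h$ and on $i(1),\ldots,i(r)$) with the property that
\[
\mathbb{E}h\,\zeta_{i(1)}^{n_{1}}\cdots\zeta_{i(r)}^{n_{r}}\neq 0
\quad\Longrightarrow\quad n_{s}=m_{s}\text{ for all }1\le s\le r.
\]
Substituting $n_{s}=j\cdot w_{s}$, this shows that the set whose cardinality we wish to bound is contained in
\[
\{j\in\{1,\ldots,N\}^{\{1,\ldots,p\}}:j\cdot w_{s}=m_{s}\ \forall\,1\le s\le r\}.
\]
Since the hypothesis that $i(1),\ldots,i(r)$ are distinct ensures that Lemma~\ref{P5} really does pin down each exponent on $\zeta_{i(s)}$ separately, Lemma~\ref{P4} applied to these particular $m_{1},\ldots,m_{r}$ immediately yields the cardinality bound $N^{p-\dim\lspan\{w_{1},\ldots,w_{r}\}}$.

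For the equivalent reformulation involving the sum of absolute values, I would invoke Lemma~\ref{P1}: each expectation $\mathbb{E}h\,\zeta_{i(1)}^{j\cdot w_{1}}\cdots\zeta_{i(r)}^{j\cdot w_{r}}$ is either $0$ or $1$, so its absolute value equals the indicator of its nonvanishing, and summing over $j$ reduces the sum to the cardinality already controlled.

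There is no genuine obstacle here; the content of the lemma is entirely bundled into the two inputs. The only minor thing to be careful about is the interaction between $h$ and the $\zeta_{i(s)}$: although $h$ may itself contain positive or negative powers of the $\zeta_{i(s)}$, this is harmless because Lemma~\ref{P5} absorbs those contributions into the shift $m_{s}=-j(i(s))$, which is exactly why the constraint on $j\cdot w_{s}$ takes the form of an affine equation rather than a homogeneous one — precisely the form Lemma~\ref{P4} is designed to handle.
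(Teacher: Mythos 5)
Your proposal is correct and is exactly the argument the paper intends: the paper offers no separate proof of Lemma~\ref{P6}, stating only that it follows by combining Lemmas~\ref{P4} and~\ref{P5}, and you have filled in precisely that combination — use Lemma~\ref{P5} (together with distinctness of the $i(s)$) to force $j\cdot w_{s}=m_{s}$ for fixed integers $m_{s}$, then apply Lemma~\ref{P4} to that affine constraint, and finally invoke Lemma~\ref{P1} to convert the cardinality bound into the sum of absolute values.
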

\begin{lemma}\label{P7}
Let $K$ be a finite set. Let $\pi_{1},\pi_{2}$ be partitions of $K$. Let $(v_{k})_{k\in K}$ be a finite collection of vectors in a vector space $V$ such that whenever $(a_{k})_{k\in K}$ are scalars satisfying
\[\sum_{k\in K}a_{k}v_{k}=0,\]
we have $a_{k}=a_{l}$ for all $k\stackrel{\pi_{2}}{\sim}l$. Then
\[\dim\lspan\left\{\sum_{k\in S}v_{k}:S\in\pi_{1}\right\}\geq|\pi_{1}|-|\pi_{1}\vee\pi_{2}|.\]
\end{lemma}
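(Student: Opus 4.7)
Write $w_S=\sum_{k\in S}v_k$ for each $S\in\pi_1$. My plan is to bound $\dim\lspan\{w_S:S\in\pi_1\}$ from below by analysing the space of linear relations among the $w_S$.

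By the rank-nullity principle,
\[
\dim\lspan\{w_S:S\in\pi_1\}=|\pi_1|-\dim R,
\]
where $R$ is the space of tuples $(c_S)_{S\in\pi_1}$ satisfying $\sum_{S\in\pi_1}c_S w_S=0$. So it suffices to show $\dim R\le|\pi_1\vee\pi_2|$.

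The key step is to translate a relation in $R$ back to a relation among the $v_k$. Given $(c_S)_{S\in\pi_1}\in R$, define $a_k=c_{S_{\pi_1}(k)}$ for each $k\in K$; by construction $a$ is constant on the blocks of $\pi_1$. Since $\sum_{k\in K}a_k v_k=\sum_{S\in\pi_1}c_S w_S=0$, the hypothesis on $(v_k)$ forces $a_k=a_l$ whenever $k\stackrel{\pi_2}{\sim}l$. Thus $a$ is constant on blocks of both $\pi_1$ and $\pi_2$, hence constant on blocks of the join $\pi_1\vee\pi_2$ (every $\pi_1\vee\pi_2$-class is generated by alternating $\pi_1$- and $\pi_2$-equivalences). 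Equivalently, viewing $\pi_1\vee\pi_2$ as an equivalence relation on the set of $\pi_1$-blocks (two $\pi_1$-blocks are equivalent iff they lie in a common block of $\pi_1\vee\pi_2$), the tuple $(c_S)_{S\in\pi_1}$ must be constant on these equivalence classes.

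The number of such equivalence classes is exactly $|\pi_1\vee\pi_2|$, since each block of $\pi_1\vee\pi_2$ is a union of blocks of $\pi_1$. Therefore the linear map $R\to\mathbb{R}^{\pi_1\vee\pi_2}$ sending $(c_S)$ to the common value on each class is injective, which yields $\dim R\le|\pi_1\vee\pi_2|$ and hence the desired inequality. The argument is purely combinatorial and linear-algebraic, with no real obstacle beyond verifying that the $\pi_1\vee\pi_2$-equivalence on $\pi_1$-blocks coincides with the transitive closure generated by $\pi_2$; this is a standard consequence of the definition of the join.
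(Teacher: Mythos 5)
Your proposal is correct and follows essentially the same route as the paper's own proof: translate a linear relation $\sum_{S\in\pi_1}c_S\big(\sum_{k\in S}v_k\big)=0$ into a relation $\sum_{k\in K}a_k v_k=0$ with $a_k=c_{S_{\pi_1}(k)}$, invoke the hypothesis to get constancy of $a$ on $\pi_2$-blocks and hence on $\pi_1\vee\pi_2$-blocks, and thereby bound the dimension of the relation space by $|\pi_1\vee\pi_2|$, finishing by rank-nullity. The paper does exactly this, only stating the rank-nullity step implicitly.
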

\begin{proof}
Let $(a_{S})_{S\in\pi_{1}}$ be scalars such that
\[\sum_{S\in\pi_{1}}a_{S}\left(\sum_{k\in S}v_{k}\right)=0.\]
For $k\in K$, let $S(k)$ be the block in $\pi_{1}$ containing $k$. Then
\[0=\sum_{S\in\pi_{1}}a_{S}\left(\sum_{k\in S}v_{k}\right)=\sum_{S\in\pi_{1}}\sum_{k\in S}a_{S}v_{k}=\sum_{S\in\pi_{1}}\sum_{k\in S}a_{S(k)}v_{k}=\sum_{k\in K}a_{S(k)}v_{k}.\]
So by assumption, $a_{S(k)}=a_{S(l)}$ for all $k\stackrel{\pi_{2}}{\sim}l$. Hence, $a_{S(k)}=a_{S(l)}$ for all $k\stackrel{\pi_{1}\vee\pi_{2}}{\sim}l$. Therefore,
\[\dim\left\{(a_{S})_{S\in\pi_{1}}:\sum_{S\in\pi_{1}}a_{S}\left(\sum_{k\in S}v_{k}\right)=0\right\}\leq|\pi_{1}\vee\pi_{2}|.\]
Thus, the result follows.
\end{proof}
\begin{lemma}\label{P8}
Let $K\subset L$ be finite sets. Let $\pi$ be a partition of $L$. Let $\lambda$ be a partition of $K$. Then $\lambda\cup\{\{l\}:l\in L\backslash K\}$ is a partition of $L$ and
\[|(\pi\upharpoonright_{K})\vee\lambda|+|\pi|-|\pi\upharpoonright_{K}|=|\pi\vee(\lambda\cup\{\{l\}:l\in L\backslash K\})|.\]
\end{lemma}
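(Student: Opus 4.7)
The plan is to establish a block-by-block decomposition of $\pi\vee\tilde\lambda$, where I write $\tilde\lambda := \lambda \cup \{\{l\} : l\in L\setminus K\}$. This is clearly a partition of $L$ since $\lambda$ partitions $K$ and the added singletons cover $L\setminus K$.

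First I would classify the blocks of $\pi$ into two types: those intersecting $K$ (call them \emph{Type A}) and those contained entirely in $L\setminus K$ (call them \emph{Type B}). The map $S\mapsto S\cap K$ is a bijection from the Type A blocks of $\pi$ onto the blocks of $\pi\upharpoonright_{K}$, so the number of Type B blocks is exactly $|\pi|-|\pi\upharpoonright_{K}|$.

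Next I would show that the blocks of $\pi\vee\tilde\lambda$ fall into two groups. For the first group, each Type B block of $\pi$ remains a block of $\pi\vee\tilde\lambda$: every $l\in L\setminus K$ has singleton $\tilde\lambda$-class, so any chain of $\pi$/$\tilde\lambda$-equivalences starting inside a Type B block $S$ cannot leave $S$ (the $\tilde\lambda$-steps are trivial there, and $\pi$-steps stay inside $S$). For the second group, I would prove that the blocks of $\pi\vee\tilde\lambda$ meeting $K$ are in bijection with the blocks of $(\pi\upharpoonright_{K})\vee\lambda$ via $B\mapsto B\cap K$. The main content here is a \emph{shortcut} observation: any chain in $L$ connecting two elements of $K$ via $\pi$ and $\tilde\lambda$ can be shortened to a chain entirely inside $K$ using only $\pi\upharpoonright_{K}$ and $\lambda$. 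Indeed, any excursion away from $K$ must enter $L\setminus K$ via a $\pi$-step (since $\tilde\lambda$ has no equivalence class straddling $K$ and $L\setminus K$), and once in $L\setminus K$ every subsequent $\tilde\lambda$-step is forced to be trivial while every $\pi$-step stays in the same $\pi$-block; upon re-entering $K$ one arrives at a point in the same $\pi$-block as the exit point, so the entire excursion collapses to a single $\pi\upharpoonright_{K}$-step.

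Counting the two groups gives $|\pi\vee\tilde\lambda| = (|\pi|-|\pi\upharpoonright_{K}|) + |(\pi\upharpoonright_{K})\vee\lambda|$, which rearranges to the claimed identity. The only genuinely nontrivial step is the shortcut argument supporting the bijection; everything else is routine bookkeeping about how blocks restrict and join.
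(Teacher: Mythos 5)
Your proof is correct and follows essentially the same route as the paper: both partition the blocks of $\pi\vee\tilde\lambda$ according to whether they meet $K$, showing those that do not are exactly the blocks of $\pi$ lying in $L\setminus K$ and those that do biject with blocks of $(\pi\upharpoonright_K)\vee\lambda$. The paper phrases this via the set $K'$ (the union of $\pi$-blocks meeting $K$) and the observation that $K'$ splits $\pi\vee\tilde\lambda$, but the content, including the key identity $(\pi\vee\tilde\lambda)\upharpoonright_K=(\pi\upharpoonright_K)\vee\lambda$ underlying your shortcut argument, is the same.
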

\begin{proof}
Let $K'$ be the union of all blocks in $\pi$ that contain an element in $K$. Then
\[|(\pi\vee(\lambda\cup\{\{l\}:l\in L\backslash K\}))\upharpoonright_{K'}|=|(\pi\vee(\lambda\cup\{\{l\}:l\in L\backslash K\}))\upharpoonright_{K}|=|(\pi\upharpoonright_{K})\vee\lambda|,\]
\[(\pi\vee(\lambda\cup\{\{l\}:l\in L\backslash K\}))\upharpoonright_{L\backslash K'}=\pi\upharpoonright_{L\backslash K'},\]
and
\[|\pi\upharpoonright_{L\backslash K'}|=|\pi|-|\pi\upharpoonright_{K'}|\]
Since $K'$ splits the partition $\pi\vee(\lambda\cup\{\{l\}:l\in L\backslash K\})$, we have
\begin{align*}
|\pi\vee(&\lambda\cup\{\{l\}:l\in L\backslash K\})|\\=&
|(\pi\vee(\lambda\cup\{\{l\}:l\in L\backslash K\}))\upharpoonright_{K'}|+|(\pi\vee(\lambda\cup\{\{l\}:l\in L\backslash K\}))\upharpoonright_{L\backslash K'}|\\=&
|(\pi\upharpoonright_{K})\vee\lambda|+|\pi\upharpoonright_{L\backslash K'}|=|(\pi\upharpoonright_{K})\vee\lambda|+|\pi|-|\pi\upharpoonright_{K'}|.
\end{align*}
\end{proof}
\begin{lemma}\label{P9}
Let $\pi_{1},\pi_{2}$ be a partitions of $L$. If $|\pi_{1}\vee\pi_{2}|>\frac{1}{2}|\pi_{2}|$ then there exists a block $S\in\pi_{2}$ that splits $\pi_{1}$.
\end{lemma}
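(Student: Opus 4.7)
The plan is a short counting argument. The key observation is that $\pi_{1}\vee\pi_{2}$, being the join in the partition lattice, is coarser than both $\pi_{1}$ and $\pi_{2}$, so every block of $\pi_{2}$ is contained in a unique block of $\pi_{1}\vee\pi_{2}$, and each block $T$ of $\pi_{1}\vee\pi_{2}$ is simultaneously a union of $\pi_{1}$-blocks and a union of $\pi_{2}$-blocks. For each $T\in\pi_{1}\vee\pi_{2}$, let $n_{T}$ denote the number of blocks of $\pi_{2}$ contained in $T$; then clearly $n_{T}\ge 1$ and $\sum_{T\in\pi_{1}\vee\pi_{2}}n_{T}=|\pi_{2}|$.

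Next I would apply a pigeonhole step using the hypothesis. From $|\pi_{1}\vee\pi_{2}|>\frac{1}{2}|\pi_{2}|$ we get
\[\sum_{T\in\pi_{1}\vee\pi_{2}}n_{T}=|\pi_{2}|<2|\pi_{1}\vee\pi_{2}|,\]
so some block $T$ of $\pi_{1}\vee\pi_{2}$ must satisfy $n_{T}<2$, hence $n_{T}=1$. Let $S$ be the unique block of $\pi_{2}$ contained in this $T$. Since $T$ is a union of $\pi_{2}$-blocks and $S$ is the only such block inside it, we conclude $T=S$. But $T$ is also a union of $\pi_{1}$-blocks, so $S=T$ is itself a union of blocks of $\pi_{1}$; that is, $S$ splits $\pi_{1}$, as required.

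I do not anticipate any real obstacle: the proof is purely combinatorial and uses only the basic property of the join in the partition lattice together with a one-line averaging argument. The only point to verify carefully is that a block $T$ of the join containing just one block of $\pi_{2}$ must coincide with that block, which is immediate from the fact that $T$ decomposes as a disjoint union of the $\pi_{2}$-blocks it contains.
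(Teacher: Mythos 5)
Your proof is correct and is essentially the argument the paper has in mind: the paper indicates that the lemma follows by the same reasoning that shows a partition of $n$ points with more than $n/2$ blocks must contain a singleton (applied to the partition of the $\pi_2$-blocks induced by $\pi_1\vee\pi_2$), and your $n_T$-counting argument is precisely the pigeonhole instantiation of that idea. The step identifying $T=S$ when $n_T=1$ and the final observation that $T$ is a union of $\pi_1$-blocks are both handled correctly.
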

The proof of Lemma \ref{P9} is analogous to the proof of the fact that a partition of $n$ points with more than $\frac{n}{2}$ blocks must contain a singleton block.

Lemma \ref{P9} can be reformulated as
\begin{lemma}\label{P10}
Let $L$ be a finite set. Let $i:L\to\{1,\ldots,N\}$. Let $\rho$ be a partition of $L$. If $|(\ker i)\vee\rho|>\frac{1}{2}|\rho|$ then there exists a block $S\in\rho$ such that $\{i(l):l\in S\}$ and $\{i(l):l\in L\backslash S\}$ are disjoint.
\end{lemma}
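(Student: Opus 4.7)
The plan is to deduce Lemma~\ref{P10} directly from Lemma~\ref{P9} by taking $\pi_{1}=\ker i$ and $\pi_{2}=\rho$, and then unpacking what "splits" means in this setting. Under the hypothesis $|(\ker i)\vee\rho|>\frac{1}{2}|\rho|$, Lemma~\ref{P9} yields a block $S\in\rho$ with the property that $S$ is a union of blocks of $\ker i$.

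The key observation is the following translation: the blocks of $\ker i$ are precisely the nonempty fibers $i^{-1}(k)$ for $k\in i(L)$. Hence $S$ being a union of blocks of $\ker i$ is equivalent to saying that for every $l\in L$, membership of $l$ in $S$ depends only on $i(l)$. Setting $T=\{i(l):l\in S\}$, this equivalence forces $\{i(l):l\in L\backslash S\}\subseteq i(L)\backslash T$, so the two image sets are disjoint, which is exactly the conclusion of Lemma~\ref{P10}.

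The only real content is therefore this definitional unpacking; no estimation or combinatorial argument is required beyond invoking Lemma~\ref{P9}. There is no substantive obstacle, and writing out the argument amounts to one paragraph verifying that "$S$ splits $\ker i$" and "$i(S)\cap i(L\backslash S)=\emptyset$" are equivalent phrasings.
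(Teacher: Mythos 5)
Your proposal is correct and matches the paper's approach: the paper simply states that Lemma~\ref{P10} is a reformulation of Lemma~\ref{P9}, and your argument spells out exactly the translation needed, namely that ``$S$ splits $\ker i$'' is equivalent to ``$i(S)$ and $i(L\backslash S)$ are disjoint,'' since the blocks of $\ker i$ are the fibers of $i$.
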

\begin{lemma}\label{P11}
Let $U$ be a finite set in $\mathbb{Z}$. For each $k\in(U-1)\cup(U+1)$, define the vector $v_{k}\in\mathbb{R}^{U}$ as
\[v_{k}=\begin{cases}\begin{array}{cc}e_{k+1},&k\in(U-1)\backslash(U+1)\\e_{k+1}-e_{k-1},&k\in(U-1)\cap(U+1)\\-e_{k-1},&k\in(U+1)\backslash(U-1)\end{array}.\end{cases}\]
If\[\sum_{k\in(U-1)\cup(U+1)}a_{k}v_{k}=0\]
then $a_{l-1}=a_{l+1}$ for all $l\in U$.
\end{lemma}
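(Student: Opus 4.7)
The plan is to read off, for each $l \in U$, the coefficient of the basis vector $e_l$ on the left-hand side of $\sum_{k \in (U-1)\cup(U+1)} a_k v_k = 0$, and observe that only two of the $v_k$'s contribute to it. Since the three cases defining $v_k$ all have the form $\pm e_{k\pm 1}$, the basis vector $e_l$ can appear in $v_k$ only when $k+1 = l$ (and $k \in U-1$) or $k-1 = l$ (and $k \in U+1$), i.e.\ for $k = l-1$ or $k = l+1$. Both indices lie in $(U-1)\cup(U+1)$ because $l \in U$, so both corresponding coefficients $a_{l-1}$ and $a_{l+1}$ actually appear in the linear combination.

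Next I would inspect each of the three cases of the definition to determine the coefficient of $e_l$ in $v_{l-1}$ and in $v_{l+1}$. For $v_{l-1}$: if $l-1 \in (U-1)\setminus(U+1)$ then $v_{l-1} = e_l$, and if $l-1 \in (U-1)\cap(U+1)$ then $v_{l-1} = e_l - e_{l-2}$; in both subcases the coefficient of $e_l$ is $+1$ (the third case does not apply since $l-1 \in U-1$). For $v_{l+1}$: if $l+1 \in (U+1)\setminus(U-1)$ then $v_{l+1} = -e_l$, and if $l+1 \in (U-1)\cap(U+1)$ then $v_{l+1} = e_{l+2} - e_l$; in both subcases the coefficient of $e_l$ is $-1$. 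No other $v_k$ involves $e_l$.

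Combining these two contributions, the coefficient of $e_l$ in $\sum_k a_k v_k$ is $a_{l-1} - a_{l+1}$. Since this sum is zero as a vector, this coefficient must vanish, giving $a_{l-1} = a_{l+1}$. As $l \in U$ was arbitrary, the conclusion follows.

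There is essentially no obstacle here; the only mild care needed is the case analysis to check that $+1$ and $-1$ are indeed the coefficients of $e_l$ in $v_{l-1}$ and $v_{l+1}$ regardless of which of the two relevant subcases each index falls in, and the observation that membership of $l-1$ and $l+1$ in $(U-1)\cup(U+1)$ is automatic from $l \in U$.
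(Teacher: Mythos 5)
Your proof is correct and follows essentially the same approach as the paper: both arguments extract the coefficient of $e_l$ (equivalently, take the inner product with $e_l$), observe that only $v_{l-1}$ and $v_{l+1}$ can contribute, and verify through the same case analysis that those contributions are $+a_{l-1}$ and $-a_{l+1}$ respectively.
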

\begin{proof}
Let $l\in U$. Then
\[\sum_{k\in(U-1)\cup(U+1)}a_{k}\langle v_{k},e_{l}\rangle=0.\]
Since $v_{k}\in\lspan\{e_{k-1},e_{k+1}\}$ for all $k\in(U-1)\cup(U+1)$, if $l\neq k+1$ and $l\neq k-1$ then $\langle v_{k},e_{l}\rangle=0$.
Thus, the only values of $k$ for which $\langle v_{k},e_{l}\rangle$ can possibly be nonvanishing are $l-1$ and $l+1$.
Hence,
\[a_{l+1}\langle v_{l+1},e_{l}\rangle+a_{l-1}\langle v_{l-1},e_{l}\rangle=0.\]
Since $l\in U$, we have:
\begin{enumerate}[label=$\bullet$,leftmargin=20pt]
\item $l+1\in U+1$ and $l-1\in U-1$,
\item if $l+1\in U-1$ then $v_{l+1}=e_{l+2}-e_{l}$ and $\langle v_{l+1},e_{l}\rangle=-1$,
\item if $l+1\notin U-1$ then $v_{l+1}=-e_{l}$ and $\langle v_{l+1},e_{l}\rangle=-1$,
\item if $l-1\in U+1$ then $v_{l-1}=e_{l}-e_{l+1}$ and $\langle v_{l-1},e_{l}\rangle=1$,
\item if $l-1\notin U+1$ then $v_{l-1}=e_{l}$ and $\langle v_{l-1},e_{l}\rangle=1$.
\end{enumerate}
In all of the above cases, $\langle v_{l+1},e_{l}\rangle=-1$ and $\langle v_{l-1},e_{l}\rangle=1$. Therefore, $-a_{l+1}+a_{l-1}=0$. So $a_{l-1}=a_{l+1}$.
\end{proof}

\begin{lemma}\label{P12}
Let $U$ be a finite set in $\mathbb{Z}$. Let $\sim$ be the equivalence relation on $(U-1)\cup(U+1)$ generated by $l-1\sim l+1$ $(l\in U)$.
Then this equivalence relation has at most $|(U+1)\backslash(U-1)|$ equivalence classes.
\end{lemma}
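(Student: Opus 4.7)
The approach is to view $\sim$ graph-theoretically. Set $T := (U-1)\cup(U+1)$ and let $G$ be the graph on vertex set $T$ whose edges are $\{l-1,\,l+1\}$ for each $l\in U$. The equivalence classes of $\sim$ are exactly the connected components of $G$, so it suffices to produce an injection from the set of components of $G$ into $(U+1)\setminus(U-1)$.

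The plan is to map each component $C$ to its maximum element (in $\mathbb{Z}$). Let $k := \max C$; I would then show $k\in(U+1)\setminus(U-1)$. The only edge of $G$ incident to $k$ that connects it to a strictly larger vertex is the edge $\{k,\,k+2\}$, and this edge exists precisely when $l := k+1\in U$, i.e., when $k\in U-1$. Since $k=\max C$, the vertex $k+2$ cannot lie in $C$, so no such edge exists, and hence $k\notin U-1$. Because $k\in T=(U-1)\cup(U+1)$, the only remaining possibility is $k\in U+1$, so indeed $k\in(U+1)\setminus(U-1)$.

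Distinct components have distinct maxima, so $C\mapsto\max C$ is an injection from the components of $G$ into $(U+1)\setminus(U-1)$; this immediately yields the bound on the number of equivalence classes. The degenerate case $U=\emptyset$ is trivial since both sides are $0$.

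Honestly, I do not expect any serious obstacle in this lemma: once the edge set $\{l-1,l+1\}_{l\in U}$ is written down and one thinks of using extremal elements, the verification collapses to the single two-line case analysis above. The main temptation to resist is trying to prove the bound via edge-counting or via a linear-algebra argument mirroring Lemma~\ref{P11}; the maximum-element trick is much cleaner and sidesteps both.
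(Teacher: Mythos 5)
Your proof is correct and takes essentially the same approach as the paper: both arguments show that each equivalence class contains an element of $(U+1)\setminus(U-1)$, the paper by walking rightward (in steps of two) from an arbitrary $k$ until it leaves $U-1$, and you by observing directly that the maximum of each class cannot lie in $U-1$. Your max-element packaging is a bit cleaner, but the underlying extremal idea is the same.
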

\begin{proof}
It suffices to show that every element $k$ of $(U-1)\cup(U+1)$ is related to an element in $(U+1)\backslash(U-1)$. If $k\in U+1$ then $k\sim k-2\in U-1$. So replacing $k$ by $k-2$, if necessary, we may assume that $k\in U-1$. Let $p$ be smallest natural number for which $k+2p\notin U-1$. By minimality, $k+2q\in U-1$ for all $0\leq q\leq p-1$. So $k+2q+1\in U$ so by assumption, $k+2q\sim k+2q+2$ for all $0\leq q\leq p-1$. Therefore,
\[k\sim k+2\sim k+4\sim\ldots\sim k+2p.\]
Since $k+2(p-1)\in U-1$, $k+2p\in U+1$. Hence, $k+2p\in(U+1)\backslash(U-1)$. 
\end{proof}

\begin{lemma}\label{P13}
Let $n\geq 1$. Let $\epsilon_{1},\ldots,\epsilon_{n}\in\{1,*\}$. Let $\sigma$ be the corresponding maximal alternating interval partition.
For each $I\in\sigma$, let
\[L(I)=\{k\in I:\epsilon_{k}=1\}\cup\{k+1:k\in I\text{ and }\epsilon_{k}=*\}.\]
Then $L(I_{1})\cap L(I_{2})=\emptyset$ for all distinct $I_{1},I_{2}\in\sigma$.
\end{lemma}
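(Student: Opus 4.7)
The plan is to argue by contradiction via a straightforward case analysis on which of $\epsilon_{k_1},\epsilon_{k_2}$ equal $1$ versus $*$, exploiting the defining property of the maximal alternating interval partition: consecutive indices $k$ and $k+1$ lie in different blocks of $\sigma$ if and only if $\epsilon_k=\epsilon_{k+1}$.

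First I would unpack the definition of $L(I)$ as a function: the assignment $k\mapsto k$ if $\epsilon_k=1$ and $k\mapsto k+1$ if $\epsilon_k=*$ can be viewed as a map $\phi:\{1,\ldots,n\}\to\mathbb{Z}$, so that $L(I)=\phi(I)$. The goal becomes showing that $\phi(I_1)\cap\phi(I_2)=\emptyset$ for distinct blocks $I_1,I_2\in\sigma$. Suppose $m\in\phi(I_1)\cap\phi(I_2)$, coming from $k_1\in I_1$ and $k_2\in I_2$.

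I would then run through the four cases. If $\epsilon_{k_1}=\epsilon_{k_2}=1$, then $k_1=m=k_2$, contradicting $I_1\ne I_2$; if $\epsilon_{k_1}=\epsilon_{k_2}=*$, then $k_1+1=m=k_2+1$ gives $k_1=k_2$, again a contradiction. The interesting mixed cases are: if $\epsilon_{k_1}=1$ and $\epsilon_{k_2}=*$, then $k_1=k_2+1$, so $k_2$ and $k_2+1$ lie in different blocks of $\sigma$; by maximality of the alternating blocks this forces $\epsilon_{k_2}=\epsilon_{k_2+1}$, i.e.\ $*=1$, contradiction. The case $\epsilon_{k_1}=*$, $\epsilon_{k_2}=1$ is symmetric, with $k_1+1=k_2$ forcing $\epsilon_{k_1}=\epsilon_{k_1+1}$, i.e.\ $*=1$.

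There is no real obstacle here; the only thing to be slightly careful about is recording the defining property of $\sigma$ cleanly (namely, that $k$ and $k+1$ are in the same block iff $\epsilon_k\ne\epsilon_{k+1}$, equivalently, they are in different blocks iff $\epsilon_k=\epsilon_{k+1}$), so that the mixed cases yield the contradiction $1=*$ in one line each.
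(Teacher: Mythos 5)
Your argument is correct, and it is essentially the same as the paper's: both proofs reduce to two observations — that the blocks $I_1,I_2$ are disjoint (handling the cases where $\epsilon_{k_1}=\epsilon_{k_2}$), and that by maximality a boundary between consecutive blocks occurs exactly where $\epsilon_k=\epsilon_{k+1}$ (handling the mixed cases). The paper phrases this as a direct intersection check split into four sub-intersections, while you phrase it as a contradiction from a common image point $m$, but the substance is identical.
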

Here is a quick example:  if $\eps=(1,*,1,*,*,*,1,1,*)$, then
\[
\sigma=\big\{\{1,2,3,4\},\{5\},\{6,7\},\{8,9\}\}
\]
and 
\[
L(\{1,2,3,4\})=\{1,3,5\},\quad L(\{5\})=\{6\},\quad L(\{6,7\})=\{7\},\quad L(\{8,9\})=\{8,10\}.
\]
\begin{proof}[Proof of Lemma~\ref{P13}]
Let $k\in I_{2}$ with $\epsilon_{k}=*$. If $k+1\notin I_{2}$, then since $I_{2}\in\sigma$, by the definition of $\sigma$, $\epsilon_{k+1}=*$.
On the other hand, if $k+1\in I_{2}$, then since $I_{1}$ and $I_{2}$ are disjoint blocks and are, therefore, disjoint, $k+1\notin I_{1}$.
In both cases, we have that either $k+1\notin I_{1}$ or $\epsilon_{k+1}\neq 1$. Hence,
\[\{k+1:k\in I_{2}\text{ and }\epsilon_{k}=*\}\cap\{k\in I_{1}:\epsilon_{k}=1\}=\emptyset.\]
Interchanging the roles of $I_{1}$ and $I_{2}$, we have
\[\{k+1:k\in I_{1}\text{ and }\epsilon_{k}=*\}\cap\{k\in I_{2}:\epsilon_{k}=1\}=\emptyset.\]
Since $I_{1}$ and $I_{2}$ are disjoint,
\[\{k\in I_{1}:\epsilon_{k}=1\}\cap\{k\in I_{2}:\epsilon_{k}=1\}=\emptyset\]
and
\[\{k+1:k\in I_{1}\text{ and }\epsilon_{k}=*\}\cap\{k+1:k\in I_{2}\text{ and }\epsilon_{k}=*\}=\emptyset.\]
Therefore, $L(I_{1})\cap L(I_{2})=\emptyset$.
\end{proof}

\begin{lemma}\label{P14}
Let $\epsilon_{1},\ldots,\epsilon_{n}\in\{1,*\}$
and let $\sigma$ be the corresponding maximal alternating
Let $L(I)$ for $I\in\sigma$ be as defined in Lemma~\ref{P13}.
Let
\[
U=\{2\leq k\leq n:\epsilon_{k-1}=1\text{ and }\epsilon_{k}=*\}.
\]
Let $\sim$ be the equivalence relation on $(U-1)\cup(U+1)$ generated by $l-1\sim l+1$ for $l\in U$. Then
\begin{enumerate}[label=(\roman*)]
\item\label{it:P14.1} every equivalence class of $\sim$ is of the form $L(I)$ for some $I\in\sigma$
\item\label{it:P14.2} for every $l\in\{2,\ldots,n\}\backslash(U\cup(U-1)\cup(U+1))$, there exists $I\in\sigma$ such that $\{l\}=L(I)$.
\end{enumerate}
\end{lemma}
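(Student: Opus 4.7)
My plan is to analyze each maximal alternating block $I \in \sigma$ and relate its structure to $L(I)$, to $U \cap I$, and to the equivalence classes of $\sim$.

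For part~\ref{it:P14.1}, the starting observation is that if $l \in U$, then $\epsilon_{l-1} = 1 \neq * = \epsilon_l$, so positions $l-1$ and $l$ alternate and therefore lie in the same block $I \in \sigma$. The element $l-1 \in I$ contributes $l-1$ to $L(I)$ (since $\epsilon_{l-1} = 1$), while $l \in I$ contributes $l+1$ to $L(I)$ (since $\epsilon_l = *$). Hence every generator $l-1 \sim l+1$ connects two elements of the same $L(I)$, and by the disjointness asserted in Lemma~\ref{P13}, each $\sim$-equivalence class is contained in a single $L(I)$. To see that the $\sim$-class actually exhausts $L(I)$, I write $I = \{a, \ldots, b\}$ explicitly. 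Since $\epsilon$ alternates on $I$, a direct computation shows $L(I)$ is an arithmetic progression with common difference $2$, starting at $a$ or $a+1$ (depending on $\epsilon_a$) and ending at $b$ or $b+1$ (depending on $\epsilon_b$); moreover, $U \cap I$ consists precisely of the interior $1 \to *$ transitions of $I$, occurring at positions $a+1, a+3, \ldots$ when $\epsilon_a = 1$ and at positions $a+2, a+4, \ldots$ when $\epsilon_a = *$. In either case, when $U \cap I$ is nonempty, the corresponding relations chain together consecutive terms of the progression, so all of $L(I)$ forms a single $\sim$-class.

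For part~\ref{it:P14.2}, fix $l \in \{2, \ldots, n\} \setminus (U \cup (U-1) \cup (U+1))$. The conditions $l \notin U$, $l+1 \notin U$, $l-1 \notin U$ forbid any $1 \to *$ transition at positions $l$, $l+1$, $l-1$ respectively, which I unpack by cases on $\epsilon_l$. If $\epsilon_l = 1$: either $\epsilon_{l-1} = 1$, in which case $\{l\}$ is itself a block of $\sigma$ (using $\epsilon_{l+1} = 1$ when $l < n$, forced by $l \notin U-1$) with $L(\{l\}) = \{l\}$; or $\epsilon_{l-1} = *$, in which case $\epsilon_{l-2} = *$ (or $l = 2$) and again $\epsilon_{l+1} = 1$ (or $l = n$), so $\{l-1, l\}$ is a block of $\sigma$ with $L(\{l-1, l\}) = \{l\}$. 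If $\epsilon_l = *$: then necessarily $\epsilon_{l-1} = *$ (from $l \notin U$) and $\epsilon_{l-2} = *$ (from $l-1 \notin U$, or $l = 2$), so $\{l-1\}$ is a block of $\sigma$ with $L(\{l-1\}) = \{l\}$.

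The main obstacle is the careful bookkeeping of the several sub-cases arising from the two possible starting letters of each block, the parity of $|I|$, and the boundary issues at $l = 2$ and $l = n$; however, once the structural observation that each $l \in U$ links two elements of a common $L(I)$ is in hand, the remaining verification is mechanical.
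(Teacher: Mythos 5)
Your proof is correct and uses essentially the same strategy as the paper: in part~\ref{it:P14.1} the key observation in both is that each generator $l-1\sim l+1$ (for $l\in U$) links two elements of a common $L(I)$, and then Lemma~\ref{P13} gives disjointness while the alternating structure of a block $I$ gives that $L(I)$ is a single chained $\sim$-class; part~\ref{it:P14.2} is the same case split on $\epsilon_l$ and $\epsilon_{l-1}$ with boundary checks at $l=2$ and $l=n$. Your packaging of $L(I)$ as an arithmetic progression of step $2$ and of $U\cap I$ as the interior $1\to *$ transitions streamlines the paper's more verbose sub-case verification (that $L(I)\subset(U-1)\cup(U+1)$, that $L(I)$ is $\sim$-invariant, and that its elements are all related), but the underlying argument is the same.
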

To illustrate, see the example considered after the statement of Lemma~\ref{P13}.
We have $n=9$, $U=\{2,4,9\}$ and
\[
(U-1)\cup(U+1)=\{1,3,5,8,10\}
\]
with $\sim$ generated by
\[
1\sim3,\quad3\sim5,\quad8\sim10.
\]
Thus, in this example,~\ref{it:P14.1} clearly holds.
Moreover, we have
\[
\{2,\ldots,n\}\setminus(U\cup(U-1)\cup(U+1))=\{6,7\}
\]
and in this example,~\ref{it:P14.2} clearly holds as well.
\begin{proof}[Proof of Lemma~\ref{P14}]
We will first prove~\ref{it:P14.1}.
Let\[\sigma_{0}=\{I\in\sigma:I\cap(U-1)\neq\emptyset\}.\]
We want to show that for every $I\in\sigma_{0}$, $L(I)$ is an equivalence class of $\sim$. After proving this, we show that $\cup_{I\in\sigma_{0}}L(I)=(U-1)\cup(U+1)$. This immediately gives the conclusion of~\ref{it:P14.1},
because $\{L(I)\}_{I\in\sigma_{0}}$ is the partition of $(U-1)\cup(U+1)$ that corresponds to the equivalence relation $\sim$.
\begin{enumerate}[label=\arabic*.,labelwidth=3ex,leftmargin=20pt] 
\item We first show that $L(I)\subset(U-1)\cup(U+1)$ for every $I\in\sigma_{0}$. Since $I\in\sigma_{0}$, there exists $l\in I$ such that $\epsilon_{l}=1$ and $\epsilon_{l+1}=*$. Since $I\in\sigma$, by the definition of $\sigma$, we have $l+1\in I$. (In particular, $l$ and $l+1$ are in $I$.)
\begin{enumerate}[label=\Alph*.,labelwidth=3ex,leftmargin=7pt]
\item  Suppose $k\in I$ and $\eps_k=1$.  We will show $k\in (U-1)\cup(U+1)$.
 Since $I$ is an interval of length at least 2, if $k\in I$ then $k+1\in I$ or $k-1\in I$.
If $k+1\in I$, then by the definition of $\sigma$, $\eps_{k+1}=*$ so $k+1\in U$ and $k\in U-1$.
Suppose $k+1\not\in I$.
Then either $k=n$ or $\eps_{k+1}=1$ and in either case, $k+1\notin U$.
Then $k-1\in I$ and $\eps_{k-1}=*$.
If $k-2\in I$, then $\eps_{k-2}=1$ and $k-1\in U$ and $k\in U+1$.
Otherwise, if $k-2\notin I$, then either $k=2$ or $\eps_{k-2}=*$ and we have $I=\{k-1,k\}$.
But $k\notin U$, so $I\cap(U-1)=\emptyset$, contrary to the hypothesis $I\in\sigma_0$.
Thus, we have shown $\{k\in I:\epsilon_{k}=1\}\subset(U-1)\cup(U+1)$.
\item Suppose $k\in I$ and $\eps_k=*$.  We will show $k+1\in(U-1)\cup(U+1)$.
 Since $I\in\sigma$, by the definition of $\sigma$, we must have $\epsilon_{k-1}=1$ unless $k$ is the smallest element of $I$.
\begin{enumerate}[label=\Roman*.,labelwidth=3ex,leftmargin=7pt]
\item If $\epsilon_{k-1}=1$ then $k\in U$ so $k+1\in U+1$.
\item If $k$ is the smallest element of $I$, then $k\leq l$. Since $\epsilon_{k}=*$ and $\epsilon_{l}=1$, $k\neq l$. So $k+1\leq l$ so $k+2\leq l+1\in I$. Since $I$ is an interval, it follows that $k,k+1,k+2\in I$. So $\epsilon_{k}=*$, $\epsilon_{k+1}=1$ and $\epsilon_{k+2}=*$. So $k+2\in U$ so $k+1\in U-1$.
\end{enumerate}
Thus, we have shown $\{k+1:k\in I\text{ and }\epsilon_{k}=*\}\subset(U-1)\cup(U+1)$.
\end{enumerate}
It follows that $L(I)\subset(U-1)\cup(U+1)$.
\item To show that $L(I)$ is an equivalence class of $\sim$,
we will prove that $L(I)$ is preserved by the equivalence relation $\sim$ and that all elements of $L(I)$ are related.
\begin{enumerate}[label=\Alph*.,labelwidth=3ex,leftmargin=7pt]
\item Suppose $k_{0}\in U$ and $k_{0}-1\in L(I)$.
Since $k_0\in U$, we have
\begin{equation}\label{eq:Uk0k0-1}
\epsilon_{k_{0}-1}=1,\qquad\epsilon_{k_{0}}=*
\end{equation}
Since $k_0-1\in L(I)$, either $k_{0}-1\in\{k\in I:\epsilon_{k}=1\}$ or $k_{0}-1\in\{k+1:k\in I\text{ and }\epsilon_{k}=*\}$.
In the first case, $k_{0}-1\in I$ and $\epsilon_{k_{0}}=*$ so $k_{0}\in I$.
In the second case, $k_{0}-2\in I$ and $\epsilon_{k_{0}-2}=*$; by~\eqref{eq:Uk0k0-1}, we have $k_{0}\in I$.
In both cases, $k_{0}\in I$ and $\eps_{k_0}=*$, so $k_{0}+1\in L(I)$.

On the other hand, if $k_{0}\in U$ and $k_{0}+1\in L(I)$ then either $k_{0}+1\in\{k\in I:\epsilon_{k}=1\}$
or $k_0+1\in\{k+1:k\in I\text{ and }\epsilon_{k}=*\}$.
In the first case, $k_{0}+1\in I$ and $\epsilon_{k_{0}+1}=1$.
Using~\eqref{eq:Uk0k0-1}, $k_{0}-1\in I$.
In the second case, $k_{0}\in I$.
By~\eqref{eq:Uk0k0-1}, $k_{0}-1\in I$.
In both cases $k_{0}-1\in I$ and $\eps_{k_0-1}=1$, so $k_{0}-1\in L(I)$.

Therefore, $L(I)$ is preserved by the equivalence relation $\sim$.
\item To prove that all elements of $L(I)$ are related, note that since $I$ is an interval with alternating values of $\epsilon_{k}$, $\{k\in I:\epsilon_{k}=1\}$ is of the form $\{k_{0},k_{0}+2,\ldots,k_{0}+2p\}$ for some $p\geq 0$ where $\epsilon_{k_{0}}=1$, $\epsilon_{k_{0}+1}=*$, $\epsilon_{k_{0}+2}=1$,$\ldots$, $\epsilon_{k_{0}+2p-1}=*$, $\epsilon_{k_{0}+2p}=1$. Thus, $k_{0}+1,k_{0}+3,\ldots,k_{0}+2p-1\in U$. Thus,
\[k_{0}\sim k_{0}+2\sim k_{0}+4\sim\ldots\sim k_{0}+2p.\]
This means that all the elements in $\{k\in I:\epsilon_{k}=1\}$ are related. Using the same argument, one can show that all the elements in $\{k+1:k\in I\text{ and }\epsilon_{k}=*\}$ are related. Just as the beginning of the first part of the proof, since $I\in\sigma_{0}$, there exists $l\in I$ such that $\epsilon_{l}=1$ and $\epsilon_{l+1}=*$ (thus also $l+1\in I$). So $l\in\{k\in I:\epsilon_{k}=1\}$ and $l+2\in\{k+1:k\in I\text{ and }\epsilon_{k}=*\}$. Since $l+1\in U$, $l\sim l+2$. Therefore, all elements in $L(I)=\{k\in I:\epsilon_{k}=1\}\cup\{k+1:k\in I\text{ and }\epsilon_{k}=*\}$ are related.
\end{enumerate}
Therefore, $L(I)$ is an equivalence class of $\sim$ for every $I\in\sigma_{0}$.
\item It remains to show that $\cup_{I\in\sigma_{0}}L(I)=(U-1)\cup(U+1)$. Since $L(I)\subset(U-1)\cup(U+1)$ by the first part of the proof, it suffices to show that $(U-1)\cup(U+1)\subset\cup_{I\in\sigma_{0}}L(I)$.
\begin{enumerate}[label=\Roman*.,labelwidth=3ex,leftmargin=7pt]
\item If $k_{0}\in U-1$ then $\epsilon_{k_{0}}=1$. Let $I\in\sigma$ contain $k_{0}$. Then $k_{0}\in L(I)$ and $I\in\sigma_{0}$.
\item If $k_{0}\in U+1$ then $\epsilon_{k_{0}-2}=1$ and $\epsilon_{k_{0}-1}=*$.
Let $I\in\sigma$ contain $k_{0}-1$.
Then $k_{0}\in L(I)$ and $I\in\sigma_{0}$, since $k_{0}-2\in I\cap(U-1)$.
\end{enumerate}
\end{enumerate}
This completes the proof of~\ref{it:P14.1}.

We now prove~\ref{it:P14.2}. Let $l\in\{2,\ldots,n\}\backslash(U\cup(U-1)\cup(U+1))$.
\begin{enumerate}[label=\arabic*.,labelwidth=3ex,leftmargin=20pt]
\item If $\epsilon_{l}=1$, then since $l\notin U-1$, either $l=n$ or $\epsilon_{l+1}=1$.
Since $l\notin U+1$, either $l=2$ or $\epsilon_{l-2}=*$ or $\epsilon_{l-1}=1$.
\begin{enumerate}[label=\Alph*.,labelwidth=3ex,leftmargin=7pt]
\item If $l<n$ and $\epsilon_{l-1}=1$,
then $\epsilon_{l-1}=\epsilon_{l}=\epsilon_{l+1}=1$, which implies $\{l\}\in\sigma$.
Moreover, since $\epsilon_{l}=1$, $L(\{l\})=\{l\}$.
\item If $l=n$ and $\epsilon_{l-1}=1$,
then similarly, $\epsilon_{n-1}=\epsilon_{n}=1$ and we have $\{n\}\in\sigma$ and $L(\{n\})=\{n\}$.
\item If $\epsilon_{l-1}=*$ and $2<l<n$, then $\epsilon_{l-2}=*$ and, since $\epsilon_{l}=\epsilon_{l+1}=1$, we have
$\{l-1,l\}\in\sigma$ and $L(\{l-1,l\})=\{l\}$.
\item If $e_{n-1}=*$ and if $2<l=n$ or $2=l<n$, then similarly and $\{l-1,l\}\in\sigma$ and $L(\{l-1,l\})=\{l\}$.
\item If $2=l=n$ and $e_1=*$, then $\{1,2\}\in\sigma$ and $L(\{1,2\})=\{2\}$.
\end{enumerate}
\item If $\epsilon_{l}=*$, then since $l\notin U$ we have $\epsilon_{l-1}=*$.
Since $l\notin U+1$, either $l=2$ or $\epsilon_{l-2}=*$.
In either case, we have $\{l-1\}\in\sigma$ and $L(\{l-1\})=\{l\}$.
\end{enumerate}
 This completes the proof.
\end{proof}
In the sequel, if $A$ is a $N\times N$ random matrix and $p\geq 1$ then
\[|A|_{p}:=(\mathbb{E}\circ\mathrm{tr}(A^{*}A)^{\frac{p}{2}})^{\frac{1}{p}}.\]
Thus, if $A$ is deterministic then $|A|_{p}=(\mathrm{tr}(A^{*}A)^{\frac{p}{2}})^{\frac{1}{p}}$ is the normalized Schatten $p$ norm.
\begin{lemma}\label{P15}
Let $A$ be a $N\times N$ random matrix with integrable entries. Let $p\geq 1$. Then
\[|\mathbb{E}A|_{p}\leq|A|_{p}.\]
\end{lemma}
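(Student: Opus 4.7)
The plan is to recognize this as a Jensen-type inequality for the normalized Schatten $p$-norm. First I would introduce, for a deterministic matrix $B$, the notation $\|B\|_p=(\mathrm{tr}((B^*B)^{p/2}))^{1/p}$. This is the normalized Schatten $p$-norm, and for $p\ge 1$ it is genuinely a norm on $M_N(\Cpx)$: one sees this either by recalling that the unnormalized Schatten $p$-norm is a norm (a standard fact proved via singular values and Hölder's inequality for sequences) and dividing by $N^{1/p}$, or by using trace duality. With this notation, the definition in the paper gives the identities
\[
|A|_p^p=\Eb\big[\|A\|_p^p\big],\qquad |\Eb A|_p=\|\Eb A\|_p,
\]
the second one because $\Eb A$ is deterministic.

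Next I would exploit convexity: since $\|\cdot\|_p$ is a norm on the finite-dimensional space $M_N(\Cpx)$ and $t\mapsto t^p$ is a convex nondecreasing function on $[0,\infty)$ for $p\ge 1$, the map $\Phi:M_N(\Cpx)\to[0,\infty)$ defined by $\Phi(X)=\|X\|_p^p$ is convex. Applying Jensen's inequality for vector-valued integration (using the integrability hypothesis on the entries of $A$, which guarantees $\Phi(A)\in L^1$ and $\Eb A$ is well-defined), I get
\[
\|\Eb A\|_p^p=\Phi(\Eb A)\le\Eb\big[\Phi(A)\big]=\Eb\big[\|A\|_p^p\big].
\]
Taking $p$-th roots and substituting the two identities from the first paragraph yields
\[
|\Eb A|_p=\|\Eb A\|_p\le\bigl(\Eb\big[\|A\|_p^p\big]\bigr)^{1/p}=|A|_p,
\]
which is exactly the claim.

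There is no real obstacle here; the only point worth stating carefully is the verification that $\|\cdot\|_p$ is a norm (so that $\Phi$ is convex) and the invocation of Jensen's inequality in the vector-valued setting, both of which are standard. Alternatively, one could proceed by duality: writing $\|\Eb A\|_p=\sup\{|\mathrm{tr}((\Eb A)B^*)|:\|B\|_q\le 1\}$ with $1/p+1/q=1$, pushing $\Eb$ outside, and applying Hölder gives the same bound. Either route works, and both are short.
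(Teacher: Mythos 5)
Your proof is correct and is essentially the same Jensen-type argument the paper uses. The only cosmetic difference is that you apply Jensen once to the convex function $\Phi(X)=\|X\|_p^p$, whereas the paper splits it into two steps — Jensen for the norm $\|\cdot\|_p$ to get $\|\Eb A\|_p\le\Eb\|A\|_p$, then H\"older to get $\Eb\|A\|_p\le(\Eb\|A\|_p^p)^{1/p}$ — but these amount to the same thing.
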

\begin{proof}
Since $|\cdot|_{p}$ is a norm on deterministic $N\times N$ matrices,
\[|\mathbb{E}A|_{p}\leq\mathbb{E}(\mathrm{tr}(A^{*}A)^{\frac{p}{2}})^{\frac{1}{p}}\leq(\mathbb{E}\circ\mathrm{tr}(A^{*}A)^{\frac{p}{2}})^{\frac{1}{p}}=|A|_{p},\]
where the first inequality follows from Jensen's inequality and the second inequality follows from H\"older's inequality.
\end{proof}
\begin{lemma}[\cite{Bhatia}, Exercise IV.2.7]\label{P16}
Let $A_{1}$ and $A_{2}$ be $N\times N$ (deterministic) matrices. Let $p,q,r$ be positive real numbers such that $\frac{1}{p}+\frac{1}{q}=\frac{1}{r}$. Then
\[|A_{1}A_{2}|_{r}\leq|A_{1}|_{p}|A_{2}|_{q}.\]
\end{lemma}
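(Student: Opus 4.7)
The lemma is the noncommutative H\"older inequality for the normalized Schatten $p$-norm $|\cdot|_p$; the cited reference, Exercise IV.2.7 of Bhatia, states the analogous inequality for the classical (unnormalized) Schatten norm, so the plan is simply to reduce to that version.

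First, I would relate the two norms. Since $\tr = N^{-1}\mathrm{Tr}$ on $M_N(\Cpx)$, one has
\[
|A|_p = \bigl(\tr(|A|^p)\bigr)^{1/p} = N^{-1/p}\,\|A\|_{S_p},
\]
where $\|A\|_{S_p} := (\mathrm{Tr}(|A|^p))^{1/p}$ denotes the classical Schatten $p$-norm.

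Second, I would invoke the classical noncommutative H\"older inequality
\[
\|A_1A_2\|_{S_r} \leq \|A_1\|_{S_p}\,\|A_2\|_{S_q}, \qquad \tfrac{1}{p}+\tfrac{1}{q}=\tfrac{1}{r},
\]
which is the content of the Bhatia exercise. If a self-contained derivation is wanted, the standard route is to establish Horn's weak-majorization inequality $s(A_1A_2) \prec_w s(A_1)\cdot s(A_2)$ for singular-value sequences---equivalently, the Lieb--Thirring trace inequality after reducing to positive matrices by polar decompositions---and then combine it with the scalar H\"older inequality applied to those sequences raised to the appropriate power.

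Finally, the normalization factors combine exactly under the hypothesis $\tfrac{1}{p}+\tfrac{1}{q}=\tfrac{1}{r}$:
\[
|A_1A_2|_r = N^{-1/r}\|A_1A_2\|_{S_r} \leq N^{-1/p}\|A_1\|_{S_p}\cdot N^{-1/q}\|A_2\|_{S_q} = |A_1|_p\,|A_2|_q.
\]
The only substantive obstacle is the classical Schatten--H\"older inequality invoked in the middle step; it is genuinely nontrivial, but since it appears as a textbook exercise in Bhatia, the rescaling to the normalized norm makes the present lemma an immediate corollary.
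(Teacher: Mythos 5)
Your proposal is correct, but there is nothing in the paper to compare it against: the paper simply cites Lemma~\ref{P16} as Exercise IV.2.7 in Bhatia's \emph{Matrix Analysis} and offers no proof of its own. Your rescaling $|A|_p = N^{-1/p}\|A\|_{S_p}$ is the right (and essentially only) observation needed to pass from the unnormalized Schatten--H\"older inequality to the normalized form used here, and the exponents combine exactly because $\tfrac1p+\tfrac1q=\tfrac1r$. Your sketch of a self-contained route via Horn's log-majorization of singular values and scalar H\"older is also the standard textbook derivation, so everything checks out.
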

Applying Lemma~\ref{P16} repeatedly, one obtains
\begin{lemma}\label{P17}
Let $A_{1},\ldots,A_{s}$ be $N\times N$ (deterministic) matrices. Let $p_{1},\ldots,p_{s},r\geq 1$ be such that $\frac{1}{p_{1}}+\ldots+\frac{1}{p_{s}}=\frac{1}{r}$. Then
\[|A_{1}\ldots A_{s}|_{r}\leq|A_{1}|_{p_{1}}\ldots|A_{s}|_{p_{s}}.\]
\end{lemma}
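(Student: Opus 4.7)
The plan is to prove Lemma~\ref{P17} by induction on $s$, using Lemma~\ref{P16} as the two-factor case and a telescoping choice of intermediate exponents.

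For the base case $s=1$, there is nothing to prove: the hypothesis forces $p_{1}=r$ and the inequality is an equality. For $s=2$, the statement is exactly Lemma~\ref{P16}.

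For the inductive step, assuming the result for products of $s-1$ matrices, I would split off the first factor. Define $q\geq 1$ by
\[
\frac{1}{q}=\frac{1}{p_{2}}+\cdots+\frac{1}{p_{s}},
\]
which is legitimate since $\frac{1}{q}=\frac{1}{r}-\frac{1}{p_{1}}\in(0,1]$ (using $r\leq p_{1}$, which is forced by all $p_{i}\geq 1$ and $\frac{1}{r}=\sum\frac{1}{p_{i}}\geq\frac{1}{p_{1}}$). Then $\frac{1}{p_{1}}+\frac{1}{q}=\frac{1}{r}$, so Lemma~\ref{P16} applied to the two factors $A_{1}$ and $A_{2}\cdots A_{s}$ gives
\[
|A_{1}(A_{2}\cdots A_{s})|_{r}\leq|A_{1}|_{p_{1}}\,|A_{2}\cdots A_{s}|_{q}.
\]
The inductive hypothesis applied to $A_{2},\ldots,A_{s}$ with exponents $p_{2},\ldots,p_{s}$ and target $q$ yields
\[
|A_{2}\cdots A_{s}|_{q}\leq|A_{2}|_{p_{2}}\cdots|A_{s}|_{p_{s}},
\]
and the two inequalities combine to the desired bound.

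There is essentially no obstacle here: the only point that requires a moment's thought is checking that the intermediate exponent $q$ is at least $1$ so that Lemma~\ref{P16} and the inductive hypothesis are applicable, and this follows automatically from $p_{i}\geq 1$ for all $i$. The argument is the standard inductive extension of H\"older's inequality from two to $s$ factors.
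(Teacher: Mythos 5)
Your proof is correct and matches the paper's approach: the paper gives no details for Lemma~\ref{P17}, stating only that one applies Lemma~\ref{P16} repeatedly, and your induction with the telescoping intermediate exponent $q$ is exactly the standard way to carry that out. One tiny imprecision: the bound $q\geq 1$ follows from $\frac1q\leq\frac1r\leq 1$ (i.e.\ from $r\geq 1$), not, as your last sentence suggests, from $p_i\geq 1$ alone; this does not affect the validity of the argument.
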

Applying the above to random matrices, we get the following:
\begin{lemma}\label{P18}
Let $A_{1},\ldots,A_{s}$ be $N\times N$ random matrices 
having finite moments of all orders.
Let $p_{1},\ldots,p_{s},r\geq 1$ be such that $\frac{1}{p_{1}}+\cdots+\frac{1}{p_{s}}=\frac{1}{r}$. Then
\begin{equation}\label{eq:EHoelder}
|A_{1}\ldots A_{s}|_{r}\leq|A_{1}|_{p_{1}}\ldots|A_{s}|_{p_{s}}.
\end{equation}
\end{lemma}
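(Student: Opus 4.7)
The plan is to reduce to the deterministic case (Lemma~\ref{P17}) by applying it pointwise on the probability space, and then to handle the resulting expectation by the classical H\"older inequality for scalar random variables.

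For each fixed $\omega$ in the underlying probability space, let $\|A_i(\omega)\|_p:=(\tr(A_i(\omega)^*A_i(\omega))^{p/2})^{1/p}$ denote the deterministic normalized Schatten $p$-norm of the matrix $A_i(\omega)$, so that $|A_i|_p=(\Eb\|A_i\|_p^p)^{1/p}$. First, applying Lemma~\ref{P17} pointwise in $\omega$ yields
\[
\|A_1(\omega)\cdots A_s(\omega)\|_r\;\le\;\|A_1(\omega)\|_{p_1}\cdots\|A_s(\omega)\|_{p_s}.
\]
Next, I raise both sides to the $r$-th power and take expectations, obtaining
\[
\Eb\,\|A_1\cdots A_s\|_r^r\;\le\;\Eb\bigl(\|A_1\|_{p_1}^r\cdots\|A_s\|_{p_s}^r\bigr).
\]

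Now I apply the standard H\"older inequality to this scalar-valued expectation with conjugate exponents $p_1/r,\ldots,p_s/r$. These are legitimate since each $p_i/r\ge 1$ (as $1/r\ge 1/p_i$) and $\sum_{i=1}^s r/p_i=1$ by the hypothesis $\sum_i 1/p_i=1/r$. This gives
\[
\Eb\bigl(\|A_1\|_{p_1}^r\cdots\|A_s\|_{p_s}^r\bigr)\;\le\;\prod_{i=1}^s\bigl(\Eb\|A_i\|_{p_i}^{p_i}\bigr)^{r/p_i}=\prod_{i=1}^s|A_i|_{p_i}^r.
\]
Taking $r$-th roots of the chain of inequalities then produces exactly~\eqref{eq:EHoelder}.

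There is no real obstacle here: both ingredients (the deterministic matrix H\"older inequality of Lemma~\ref{P17} and scalar H\"older) are already on hand, and the compatibility of the exponents is immediate. The only mild point to note is the assumption that the $A_i$ have finite moments of all orders, which guarantees that all the integrals appearing are finite and that applying H\"older and Fubini/Tonelli is justified; if one wished to weaken the hypothesis, one would just assume $|A_i|_{p_i}<\infty$ for each $i$, which is what the estimate actually requires.
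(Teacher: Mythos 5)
Your proposal is correct and takes essentially the same approach as the paper: apply Lemma~\ref{P17} pointwise in the sample space (equivalently, raise its conclusion to the $r$-th power), take expectations, and then apply the classical scalar H\"older inequality with the conjugate exponents $p_1/r,\ldots,p_s/r$. The only difference is cosmetic notation; the paper writes everything directly in terms of traces rather than introducing the pointwise Schatten norm $\|A_i(\omega)\|_{p_i}$.
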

\begin{proof}
By Lemma \ref{P17},
\[\mathrm{tr}((A_{1}\cdots A_{s})^{*}(A_{1}\cdots A_{s}))^{\frac{r}{2}}\leq(\mathrm{tr}(A_{1}^{*}A_{1})^{\frac{p_{1}}{2}})^{\frac{r}{p_{1}}}\cdots(\mathrm{tr}(A_{s}^{*}A_{s})^{\frac{p_{s}}{2}})^{\frac{r}{p_{s}}}.\]
Taking expectations and using H\"older's inequality, we obtain
\begin{eqnarray*}
\mathbb{E}\circ\mathrm{tr}((A_{1}\cdots A_{s})^{*}(A_{1}\cdots A_{s}))^{\frac{r}{2}}&\leq&\mathbb{E}((\mathrm{tr}(A_{1}^{*}A_{1})^{\frac{p_{1}}{2}})^{\frac{r}{p_{1}}}\cdots(\mathrm{tr}(A_{s}^{*}A_{s})^{\frac{p_{s}}{2}})^{\frac{r}{p_{s}}})\\&\leq&(\mathbb{E}\circ\mathrm{tr}(A_{1}^{*}A_{1})^{\frac{p_{1}}{2}})^{\frac{r}{p_{1}}}\cdots(\mathbb{E}\circ\mathrm{tr}(A_{s}^{*}A_{s})^{\frac{p_{s}}{2}})^{\frac{r}{p_{s}}}\\&=&|A_{1}|_{p_{1}}^{r}\cdots|A_{s}|_{p_{s}}^{r}.
\end{eqnarray*}
Thus,~\eqref{eq:EHoelder} holds.
\end{proof}
\begin{lemma}\label{P19}
Let $A_{1}^{(1)},\ldots,A_{s}^{(1)},A_{1}^{(2)},\ldots,A_{s}^{(2)}$ be $N\times N$ random matrices
having finite moments of all orders.
Let $M=\max\{|A_{l}^{(1)}|_{2(s-1)},|A_{l}^{(2)}|_{2(s-1)}:1\leq l\leq s\}$ if $s\geq 2$ and let $M=1$ if $s=1$. Then
\[\left|\mathbb{E}\circ\mathrm{tr}\left(\prod_{l=1}^{s}\left(A_{l}^{(1)}+A_{l}^{(2)}\right)\right)-\mathbb{E}\circ\mathrm{tr}\left(\prod_{l=1}^{s}A_{l}^{(1)}\right)\right|\leq 2^{s}M^{s-1}\max_{1\leq l\leq s}|A_{l}^{(2)}|_{2}.\]
\end{lemma}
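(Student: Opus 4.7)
The plan is a straightforward telescoping/expansion argument, using Lemma~\ref{P18} (the H\"older inequality for the $|\cdot|_p$ norms on random matrices) as the main analytic tool.

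First, I would expand the product over $l$ as
\[
\prod_{l=1}^{s}(A_{l}^{(1)}+A_{l}^{(2)})=\sum_{\eps\in\{1,2\}^{s}}A_{1}^{(\eps(1))}\cdots A_{s}^{(\eps(s))}.
\]
The term corresponding to $\eps=(1,\ldots,1)$ cancels with $\prod_{l}A_{l}^{(1)}$, so the left-hand side equals $\big|\mathbb{E}\circ\mathrm{tr}\sum_{\eps\ne(1,\ldots,1)}A_{1}^{(\eps(1))}\cdots A_{s}^{(\eps(s))}\big|$, a sum of $2^{s}-1$ terms. The basic observation is the trace estimate $|\mathbb{E}\circ\mathrm{tr}(A)|\le\mathbb{E}|\mathrm{tr}(A)|\le\mathbb{E}(\mathrm{tr}(A^{*}A)^{1/2})=|A|_{1}$ (using $|\mathrm{tr}(A)|\le|A|_{1}$ for deterministic matrices, then Jensen), which reduces everything to bounding each term in $|\cdot|_{1}$.

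For each $\eps\neq(1,\ldots,1)$, I would pick an index $l_{0}=l_{0}(\eps)$ with $\eps(l_{0})=2$. In the case $s\geq 2$, I apply Lemma~\ref{P18} with $r=1$, assigning the exponent $p_{l_{0}}=2$ to the distinguished factor $A_{l_{0}}^{(2)}$ and the exponent $p_{l}=2(s-1)$ to each of the remaining $s-1$ factors; these exponents satisfy $\tfrac{1}{2}+(s-1)\cdot\tfrac{1}{2(s-1)}=1$, so
\[
\big|A_{1}^{(\eps(1))}\cdots A_{s}^{(\eps(s))}\big|_{1}\le\big|A_{l_{0}}^{(2)}\big|_{2}\prod_{l\neq l_{0}}\big|A_{l}^{(\eps(l))}\big|_{2(s-1)}\le M^{s-1}\max_{1\le l\le s}|A_{l}^{(2)}|_{2}.
\]
Summing over the $2^{s}-1<2^{s}$ choices of $\eps$ gives the desired bound. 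For $s=1$ there is only one term, namely $A_{1}^{(2)}$, and $|\mathbb{E}\circ\mathrm{tr}(A_{1}^{(2)})|\le|A_{1}^{(2)}|_{1}\le|A_{1}^{(2)}|_{2}\le 2\cdot 1\cdot|A_{1}^{(2)}|_{2}$, which matches the convention $M=1$.

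There is no real obstacle here; the only subtlety is choosing the H\"older exponents correctly (one factor at $|\cdot|_{2}$, the rest at $|\cdot|_{2(s-1)}$) so that the product bound produces precisely one factor of $\max_{l}|A_{l}^{(2)}|_{2}$ and the prescribed power $M^{s-1}$, and handling the degenerate case $s=1$ separately to justify the convention $M=1$.
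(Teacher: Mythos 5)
Your proof is correct and follows essentially the same route as the paper's: expand the product into $2^s$ terms, cancel the all-$(1)$ term, and for each remaining term pick an index $l_0$ with $\eps(l_0)=2$ and apply H\"older with exponents $p_{l_0}=2$ and $p_l=2(s-1)$ otherwise. The only cosmetic difference is that the paper reaches the needed per-term bound by cyclically permuting and applying Cauchy--Schwarz before H\"older, whereas you invoke $|\mathbb{E}\circ\mathrm{tr}(A)|\le|A|_1$ and then apply Lemma~\ref{P18} with $r=1$ all at once.
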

\begin{proof}
If $s=1$, the result follows from the Cauchy--Schwarz inequality.
Assume $s\geq 2$. First,
\[\mathbb{E}\circ\mathrm{tr}\left(\prod_{l=1}^{s}\left(A_{l}^{(1)}+A_{l}^{(2)}\right)\right)=\sum_{\epsilon_{1},\ldots,\epsilon_{s}\in\{1,2\}}\mathbb{E}\circ\mathrm{tr}(A_{1}^{(\epsilon_{1})}\cdots A_{s}^{\epsilon_{s}}).\]
So
\begin{equation}\label{Product}
\left|\mathbb{E}\circ\mathrm{tr}\left(\prod_{l=1}^{s}\left(A_{l}^{(1)}+A_{l}^{(2)}\right)\right)-\mathbb{E}\circ\mathrm{tr}\left(\prod_{l=1}^{s}A_{l}^{(1)}\right)\right|\leq\sum_{\substack{\epsilon_{1},\ldots,\epsilon_{s}\in\{1,2\}\\\exists l_{0}\text{ s.t. }\epsilon_{l_{0}}\text{ is }2}}|\mathbb{E}\circ\mathrm{tr}(A_{1}^{(\epsilon_{1})}\cdots A_{s}^{(\epsilon_{s})})|.
\end{equation}
For each $\epsilon_{1},\ldots,\epsilon_{s}\in\{1,2\}$ with $\epsilon_{l_{0}}=2$, taking $p_{l}=2(s-1)$ for $l\neq l_{0}$, $p_{l_{0}}=2$ and $r=1$ in Lemma \ref{P17}, we obtain
\begin{align*}
\big|\mathbb{E}\circ\mathrm{tr}(A_{1}^{(\epsilon_{1})}\cdots A_{s}^{(\epsilon_{s})})\big|
&\leq\big|A_{l_{0}}^{(2)}\big|_{2}\,\big|A_{l_0+1}^{(\eps_{l_0+1})}\cdots A_s^{(\eps_s)}A_1^{(\eps_1)}\cdots A_{l_0-1}^{(\eps_{l_0-1})}\big|_2 \\[1ex]
&\leq\big|A_{l_{0}}^{(2)}\big|_{2}\,\prod_{j\ne l_0}\big|A_j^{(\eps_j)}\big|_{2(s-1)}\le M^{s-1}\big|A_{l_{0}}^{(2)}\big|_{2},
\end{align*}
where for the first inequality we used the trace property and the Cauchy--Schwarz inequality, while for the second we used
H\"older's inequality (Lemma~\ref{P18}).
Since there are $2^s-1$ terms in the summation in~\eqref{Product}, the desired upper bound holds.
\end{proof}

\medskip
We will now show that the off-diagonal entries of alternating products in $X_N$ and $X_N^*$,
with deterministic diagonal matrices interspersed,
have expectations that are zero or are asymptotically small as the matrix size goes to infinity.

\begin{lemma}
Let $\zeta_{1},\ldots,\zeta_{N}$ be independent random variables uniformly distributed on the unit circle. Let $i(1),\ldots,i(r)\in\{1,\ldots,N\}$. If $j(1),\ldots,j(r)\in\mathbb{Z}$ satisfy
\[\mathbb{E}\zeta_{i(1)}^{j(1)}\cdots\zeta_{i(r)}^{j(r)}\neq 0\]
then $j(1)+\cdots+j(r)=0$.
\end{lemma}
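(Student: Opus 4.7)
The plan is to reduce directly to Lemma~\ref{P1} by grouping together all factors that share a common underlying random variable. Although the indices $i(1),\ldots,i(r)$ may repeat, the product $\zeta_{i(1)}^{j(1)}\cdots\zeta_{i(r)}^{j(r)}$ is unchanged if we collect exponents by index: for each $i\in\{1,\ldots,N\}$, set
\[
J(i) = \sum_{\substack{1\le s\le r\\ i(s)=i}} j(s),
\]
(with the empty sum equal to $0$) so that
\[
\zeta_{i(1)}^{j(1)}\cdots\zeta_{i(r)}^{j(r)} \;=\; \prod_{i=1}^{N}\zeta_{i}^{J(i)}.
\]

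Next I would apply Lemma~\ref{P1} to this reformulated product. Since the right-hand side is a product of powers of the $\zeta_i$ with no repetitions of base, Lemma~\ref{P1} forces the expectation to be $0$ unless every $J(i)=0$. Assuming the expectation is nonzero, we must therefore have $J(i)=0$ for all $i$, and consequently
\[
j(1)+\cdots+j(r) \;=\; \sum_{i=1}^{N}\sum_{\substack{1\le s\le r\\ i(s)=i}} j(s) \;=\; \sum_{i=1}^{N} J(i) \;=\; 0,
\]
which is the desired conclusion.

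There is no substantive obstacle here; the statement is essentially a bookkeeping consequence of Lemma~\ref{P1}, noting that the vanishing of each per-index exponent sum $J(i)$ automatically implies the vanishing of their total. (Conceptually, the same conclusion follows from the rotation invariance of the joint distribution of $(\zeta_1,\ldots,\zeta_N)$: replacing each $\zeta_i$ by $e^{\mathrm{i}\theta}\zeta_i$ preserves the joint law but multiplies the product by $e^{\mathrm{i}\theta(j(1)+\cdots+j(r))}$, so the expectation can be nonzero only when $j(1)+\cdots+j(r)=0$.)
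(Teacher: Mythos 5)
Your proof is correct and takes essentially the same route as the paper: you collect exponents by underlying index (the paper does this via $\ker i$), invoke Lemma~\ref{P1} to conclude that each per-variable exponent sum $J(i)$ vanishes, and then sum over $i$. The rotation-invariance remark is a nice independent sanity check but is not part of the main argument.
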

\begin{proof}
Let $\pi=\ker i$. By Lemma~\ref{P1}, $\sum_{k\in S}j(k)=0$ for all $S\in\pi$.
So
\[
\sum_{k=1}^{r}j(k)=\sum_{S\in\pi}\sum_{k\in S}j(k)=0.
\]
\end{proof}
\begin{lemma}\label{Odd}
Let $n\geq 1$ be an odd number. Let $\epsilon_{1},\ldots,\epsilon_{n}\in\{1,*\}$ be alternating. Let $d_{1},\ldots,d_{n}$ be deterministic diagonal $N\times N$ matrices. Then
\[\mathbb{E}\prod_{k=1}^{n}d_{k}X_{N}^{\epsilon_{k}}=0.\]
\end{lemma}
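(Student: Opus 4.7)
The plan is to compute, for arbitrary fixed $a,b\in\{1,\ldots,N\}$, the expectation of the $(a,b)$-entry of the product $\prod_{k=1}^n d_k X_N^{\epsilon_k}$ and show that it vanishes. Expanding the matrix product via intermediate indices $i(2),\ldots,i(n)\in\{1,\ldots,N\}$, and setting $i(1)=a$, $i(n+1)=b$, this entry equals
\[
N^{-n/2}\sum_{i(2),\ldots,i(n)}\left(\prod_{k=1}^n d_k(i(k))\right)\mathbb{E}\prod_{k=1}^n (\zeta^{\epsilon_k})_{i(k),i(k+1)},
\]
where I am using that each $d_k$ is diagonal and that $(X_N)_{u,v}=N^{-1/2}\zeta_u^v$ and $(X_N^*)_{u,v}=N^{-1/2}\zeta_v^{-u}$. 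It then suffices to show, term-by-term, that each of the inner expectations is zero.

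The key step is to invoke the lemma stated just before Lemma~\ref{Odd}, according to which a nonzero expectation $\mathbb{E}\zeta_{i(1)}^{j(1)}\cdots\zeta_{i(r)}^{j(r)}$ forces the total exponent $j(1)+\cdots+j(r)$ to vanish. So I would add up the signed exponents of all the $\zeta$ factors. The $k$-th factor contributes $+i(k+1)$ when $\epsilon_k=1$ and $-i(k)$ when $\epsilon_k=*$. Because $\epsilon_1,\ldots,\epsilon_n$ alternate and $n$ is odd, all interior indices cancel in pairs and the total exponent telescopes to a single boundary term: if $\epsilon_k=1$ on odd $k$ and $\epsilon_k=*$ on even $k$, the sum reduces to $i(n+1)=b$; in the opposite case it reduces to $-i(1)=-a$. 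In either situation a nonvanishing expectation would force $a=0$ or $b=0$, contradicting $a,b\in\{1,\ldots,N\}$.

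Hence every term in the sum is zero, and so the $(a,b)$-entry of $\mathbb{E}\prod_{k=1}^n d_k X_N^{\epsilon_k}$ is zero for every $(a,b)$, giving the claimed matrix identity. I don't expect any real obstacle here; the argument is essentially bookkeeping of $\zeta$ exponents plus the preceding vanishing lemma, with the parity hypothesis ($n$ odd) being exactly what prevents the telescoping sum from cancelling completely and forces it to leave behind a nonzero boundary index.
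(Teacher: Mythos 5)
Your proposal is correct and follows essentially the same route as the paper: expand the $(a,b)$-entry into a sum over interior indices, compute the total exponent of the $\zeta$'s using the preceding lemma, and observe that the alternating, odd-length structure makes that total telescope to a single boundary index ($i(n+1)$ or $-i(1)$), which is nonzero. The only cosmetic difference is that the paper treats only the $\epsilon_1=1$ case in detail (remarking the other is similar), whereas you carry out both parities.
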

\begin{proof}
The proof when $\epsilon_{1}=1$ and the proof when $\epsilon_{1}=*$ are similar. So we only do the case when $\epsilon_{1}=1$. Let $i(1),i(n+1)\in\{1,\ldots,N\}$.
\begin{eqnarray}\label{Exp}
\left(\mathbb{E}\prod_{k=1}^{n}d_{k}X_{N}^{\epsilon_{k}}\right)_{i(1),i(n+1)}&=&
\sum_{i:\{2,3,\ldots,n\}\to\{1,\ldots,N\}}\mathbb{E}\prod_{k=1}^{n}(d_{k})_{i(k),i(k)}(X_{N}^{\epsilon_{k}})_{i(k),i(k+1)}\nonumber\\&=&
\sum_{i:\{2,3,\ldots,n\}\to\{1,\ldots,N\}}\prod_{k=1}^{n}(d_{k})_{i(k),i(k)}\mathbb{E}\prod_{k=1}^{n}(X_{N}^{\epsilon_{k}})_{i(k),i(k+1)}.
\end{eqnarray}
Since $\epsilon_{1}=1$ and $\epsilon_{1},\ldots,\epsilon_{n}$ are alternating, $\epsilon_{k}=1$ when $k$ is odd, and $\epsilon_{k}=*$ when $k$ is even so
\begin{eqnarray*}
\mathbb{E}\prod_{k=1}^{n}(X_{N}^{\epsilon_{k}})_{i(k),i(k+1)}&=&
\frac{1}{N^{\frac{n}{2}}}\mathbb{E}\prod_{l=1}^{(n+1)/2}(X_{N}^{\epsilon_{2l-1}})_{i(2l-1),i(2l)}\prod_{m=1}^{(n-1)/2}(X_{N}^{\epsilon_{2m}})_{i(2m),i(2m+1)}\\&=&
\frac{1}{N^{\frac{n}{2}}}\mathbb{E}\prod_{l=1}^{(n+1)/2}\zeta_{i(2l-1)}^{i(2l)}\prod_{m=1}^{(n-1)/2}\zeta_{i(2m+1)}^{-i(2m)}.
\end{eqnarray*}
Since the sum of the exponents is
\[\sum_{l=1}^{(n+1)/2}i(2l)+\sum_{m=1}^{(n-1)/2}(-i(2m))=i(n+1)\neq 0,\]
by Lemma \ref{Odd}, $\mathbb{E}\prod_{k=1}^{n}(X_{N}^{\epsilon_{k}})_{i(k),i(k+1)}=0$. Thus, the result follows.
\end{proof}
Using an argument similar to that in the proof of Lemma \ref{Odd}, one obtains
\begin{lemma}\label{*1}
Let $n\geq2$ be an even number. Let $\epsilon_{1},\ldots,\epsilon_{n}\in\{1,*\}$ be alternating. Let $d_{1},\ldots,d_{n}$ be deterministic diagonal $N\times N$ matrices. If $\epsilon_{1}=*$ then
\[\mathbb{E}\prod_{k=1}^{n}d_{k}X_{N}^{\epsilon_{k}}\]
is a diagonal matrix.
\end{lemma}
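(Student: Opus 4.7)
My plan is to follow the same template as the proof of Lemma~\ref{Odd}: expand an off-diagonal entry of $\mathbb{E}\prod_{k=1}^n d_k X_N^{\epsilon_k}$, reduce it to a sum of expectations of products of the $\zeta_j$ and their inverses, and show each such expectation vanishes by applying the unnumbered lemma that appears immediately before Lemma~\ref{Odd} (the one stating that if $\mathbb{E}\zeta_{i(1)}^{j(1)}\cdots\zeta_{i(r)}^{j(r)}\neq 0$ then $j(1)+\cdots+j(r)=0$).

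Concretely, I would fix $i(1),i(n+1)\in\{1,\ldots,N\}$ with $i(1)\neq i(n+1)$ and write the $(i(1),i(n+1))$-entry of $\mathbb{E}\prod_{k=1}^n d_kX_N^{\epsilon_k}$ as a sum over the intermediate indices $i(2),\ldots,i(n)$, pulling the deterministic diagonal factors $(d_k)_{i(k),i(k)}$ out of the expectation, exactly as in equation~(\ref{Exp}) in the proof of Lemma~\ref{Odd}. Since $\epsilon_1=*$, $n$ is even, and the $\epsilon_k$ alternate, we have $\epsilon_k=*$ for $k$ odd and $\epsilon_k=1$ for $k$ even. Using $(X_N)_{a,b}=N^{-1/2}\zeta_a^b$ and $(X_N^*)_{a,b}=N^{-1/2}\zeta_b^{-a}$, each odd $k$ contributes a factor $\zeta_{i(k+1)}^{-i(k)}$ and each even $k$ contributes a factor $\zeta_{i(k)}^{i(k+1)}$, which pair up into $n/2$ groups sharing the base $\zeta_{i(2m)}$. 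Thus each summand is a scalar multiple of
\[
\mathbb{E}\prod_{m=1}^{n/2}\zeta_{i(2m)}^{\,i(2m+1)-i(2m-1)}.
\]

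The total of the exponents in this product telescopes:
\[
\sum_{m=1}^{n/2}\bigl(i(2m+1)-i(2m-1)\bigr)=i(n+1)-i(1),
\]
which is nonzero by the assumption $i(1)\neq i(n+1)$. The cited preceding lemma then forces the displayed expectation to vanish, so every summand contributes $0$, and the $(i(1),i(n+1))$-entry of $\mathbb{E}\prod_{k=1}^n d_kX_N^{\epsilon_k}$ is $0$. Since this holds for all off-diagonal index pairs, the matrix is diagonal.

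I do not expect any real obstacle; this is essentially a parity-reversed variant of Lemma~\ref{Odd}. The only thing worth being careful about is that in the odd case of Lemma~\ref{Odd} the telescoping sum reduces to the single quantity $i(n+1)$, which is automatically nonzero, giving the stronger conclusion that the whole matrix vanishes; in the present even, $\epsilon_1=*$ case the sum instead collapses to $i(n+1)-i(1)$, yielding only the weaker and correct statement that off-diagonal entries vanish. Sign/index bookkeeping and the parity of $n$ are the only things to verify.
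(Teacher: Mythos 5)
Your argument is correct and is exactly what the paper means by ``using an argument similar to that in the proof of Lemma~\ref{Odd}'': you expand the $(i(1),i(n+1))$ entry as in~\eqref{Exp}, pair the factors into $\zeta_{i(2m)}^{\,i(2m+1)-i(2m-1)}$, observe the exponent sum telescopes to $i(n+1)-i(1)\ne 0$, and invoke the unnumbered lemma preceding Lemma~\ref{Odd} to kill each term. Your closing remark correctly identifies why this case only yields vanishing of off-diagonal entries rather than of the whole matrix.
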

\begin{lemma}\label{1*}
Let $n\geq 1$ be an even number. Let $\epsilon_{1},\ldots,\epsilon_{n}\in\{1,*\}$ be alternating. Suppose that $\epsilon_{1}=1$. Let $d_{1},\ldots,d_{n}$ be deterministic diagonal $N\times N$ matrices of norm at most 1. Let
\[Z_{N}=\prod_{k=1}^{n}d_{k}X_{N}^{\epsilon_{k}}.\]
Then for every integer $p\geq 1$, there is a constant $C=C(n,p)$ such that
\begin{equation}\label{eq:TrCpn}
\mathrm{Tr}((\mathbb{E}Z_{N}-\mathbb{E}\circ\mathrm{diag}Z_{N})^{*}(\mathbb{E}Z_{N}-\mathbb{E}\circ\mathrm{diag}Z_{N}))^{p}\leq C.
\end{equation}
\end{lemma}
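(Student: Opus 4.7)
The plan is to show that every off-diagonal entry $(W_N)_{a,b} = (\mathbb{E}Z_N)_{a,b}$ (for $a \ne b$) is bounded by $C(n)/N$, then to conclude $\mathrm{Tr}((W_N^*W_N)^p) \le C^{2p}$ by expanding the trace as a sum over at most $N^{2p}$ index tuples, each contributing at most $(C/N)^{2p}$ in modulus.

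For the entry bound, I would write $(Z_N)_{a,b}$ explicitly: since $\epsilon_1 = 1$ and the $\epsilon_k$ alternate, setting $i_1 = a$ and $i_{n+1} = b$ and summing over internal $i_2,\ldots,i_n \in \{1,\ldots,N\}$ yields
\begin{equation*}
(Z_N)_{a,b} = N^{-n/2} (d_1)_a \sum_{i_2,\ldots,i_n} \Big(\prod_{k=2}^n (d_k)_{i_k}\Big) \zeta_a^{i_2}\zeta_{i_3}^{-i_2+i_4}\cdots\zeta_{i_{n-1}}^{-i_{n-2}+i_n}\zeta_b^{-i_n}.
\end{equation*}
Grouping internal odd indices by the partition $\pi$ of the odd positions $\{1,3,\ldots,n+1\}$ induced by coincidences of the $\zeta$-subscripts, Lemma~\ref{P1} restricts attention to partitions in which positions $1$ and $n+1$ lie in different blocks (since $a \ne b$) and for which every block's exponent sum vanishes. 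For each such $\pi$ with $|\pi| = q$, the number of internal odd-index configurations is $O(N^{q-2})$ (the blocks containing positions $1$ and $n+1$ are fixed to $a,b$, the other $q-2$ blocks take distinct values), and by Lemma~\ref{P6} the number of even-index configurations satisfying the exponent-vanishing constraints is at most $N^{n/2-r}$, where $r$ is the rank of the collection of $q$ block-sum vectors in $\mathbb{R}^{n/2}$.

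The key computation is to show $r = q - 1$. Writing the position exponent vectors as $v_l = e_l - e_{l-1}$ for $l = 1,\ldots,n/2+1$ (with the convention $e_0 = e_{n/2+1} = 0$), the block-sum vectors telescope to zero, so $r \le q - 1$. For the reverse, any linear dependence $\sum_{j=1}^q c_j (\text{block}_j\text{-sum}) = 0$ expands to $\sum_l (c_{\beta(l)} - c_{\beta(l+1)}) e_l = 0$, where $\beta(l)$ denotes the block containing the $l$-th position; this forces $c_{\beta(l)} = c_{\beta(l+1)}$ for each $l$, hence all the $c_j$ are equal. Thus $r = q - 1$ exactly, and each admissible partition contributes $O(N^{-n/2+(q-2)+(n/2-(q-1))}) = O(N^{-1})$ to $|(\mathbb{E}Z_N)_{a,b}|$. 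Summing over the $O_n(1)$-many partitions yields $|(W_N)_{a,b}| \le C(n)/N$ uniformly.

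Finally,
\begin{equation*}
\mathrm{Tr}((W_N^*W_N)^p) = \sum_{c_1,m_1,\ldots,c_p,m_p} \prod_{s=1}^{p} \overline{(W_N)_{m_s,c_s}}\,(W_N)_{m_s,c_{s+1}}
\end{equation*}
(with $c_{p+1} = c_1$) has at most $N^{2p}$ nonvanishing terms (since $(W_N)_{ii} = 0$), each bounded in modulus by $(C/N)^{2p}$, giving the estimate $\mathrm{Tr}((W_N^*W_N)^p) \le C^{2p}$, with a constant depending only on $n$ and $p$. The main obstacle is the rank identity $r = q - 1$; once this is in hand, the rest is a routine dimension count and accounting.
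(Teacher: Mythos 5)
Your proposal is correct and follows essentially the same route as the paper: bound each off-diagonal entry of $\mathbb{E}Z_N$ by $C(n)/N$ via the partition-by-coincidences decomposition and the rank estimate for the telescoping block-sum vectors, then pass to the trace by a direct index count. The only cosmetic differences are that you relabel the exponent vectors as $v_l=e_l-e_{l-1}$ and prove the rank identity $r=q-1$ directly, whereas the paper states the linear-independence fact and invokes Lemma~\ref{P7} to get $r\geq q-1$ (which is all that is needed); both hinge on exactly the same telescoping computation.
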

\begin{proof}
Let $i(1)\neq i(n+1)\in\{1,\ldots,N\}$. By~\eqref{Exp},
\begin{eqnarray*}
(\mathbb{E}Z_{N})_{i(1),i(n+1)}&=&\left(\mathbb{E}\prod_{k=1}^{n}d_{k}X_{N}^{\epsilon_{k}}\right)_{i(1),i(n+1)}\\&=&\sum_{i:\{2,3,\ldots,n\}\to\{1,\ldots,N\}}\prod_{k=1}^{n}(d_{k})_{i(k),i(k)}\mathbb{E}\prod_{k=1}^{n}(X_{N}^{\epsilon_{k}})_{i(k),i(k+1)}.
\end{eqnarray*}
Since the $d_{k}$ have norms at most 1, we have
\begin{equation}\label{Expandineq}
|(\mathbb{E}Z_{N})_{i(1),i(n+1)}|\leq\sum_{i:\{2,3,\ldots,n\}\to\{1,\ldots,N\}}\left|\mathbb{E}\prod_{k=1}^{n}(X_{N}^{\epsilon_{k}})_{i(k),i(k+1)}\right|.
\end{equation}
Since $\epsilon_{k}=1$ when $k$ is odd and $\epsilon_{k}=*$ when $k$ is even,
\begin{align*}
\mathbb{E}\prod_{k=1}^{n}&(X_{N}^{\epsilon_{k}})_{i(k),i(k+1)}=
\mathbb{E}\prod_{l=1}^{n/2}(X_{N}^{\epsilon_{i(2l-1)}})_{i(2l-1),i(2l)}\prod_{m=1}^{n/2}(X_{N}^{\epsilon_{2m}})_{i(2m),i(2m+1)} \\
&=\frac{1}{N^{\frac{n}{2}}}\mathbb{E}\prod_{l=1}^{n/2}\zeta_{i(2l-1)}^{i(2l)}\prod_{m=1}^{n/2}\zeta_{i(2m+1)}^{-i(2m)}
=\frac{1}{N^{\frac{n}{2}}}\mathbb{E}\prod_{l=1}^{n/2}\zeta_{i(2l-1)}^{i(2l)}\prod_{m=2}^{(n/2)+1}\zeta_{i(2m-1)}^{-i(2m-2)}\\
&=\frac{1}{N^{\frac{n}{2}}}\mathbb{E}\zeta_{i(1)}^{i(2)}\left(\prod_{l=2}^{n/2}\zeta_{i(2l-1)}^{i(2l)}\right)\left(\prod_{m=2}^{n/2}\zeta_{i(2m-1)}^{-i(2m-2)}\right)\zeta_{i(n+1)}^{-i(n)} \\
&=\frac{1}{N^{\frac{n}{2}}}\mathbb{E}\zeta_{i(1)}^{i(2)}\left(\prod_{l=2}^{n/2}\zeta_{i(2l-1)}^{i(2l)-i(2l-2)}\right)\zeta_{i(n+1)}^{-i(n)}.
\end{align*}
Let $v_{1},v_{3},\ldots,v_{n+1}\in\mathbb{R}^{n}$ be given by
\begin{align*}
v_{1}&=e_{2} \\
v_{2l-1}&=e_{2l}-e_{2l-2}, \quad(l=2,\ldots,\frac{n}{2}) \\
v_{n+1}&=-e_{n}.
\end{align*}
Let $j:\{2,4,\ldots,n\}\to\{1,\ldots,N\}$ be the restriction of $i$ to $\{2,4\ldots,n\}$.
Then we have
\[\mathbb{E}\prod_{k=1}^{n}(X_{N}^{\epsilon_{k}})_{i(k),i(k+1)}=\frac{1}{N^{\frac{n}{2}}}\mathbb{E}\prod_{l=1}^{(n/2)+1}\zeta_{i(2l-1)}^{j\cdot v_{2l-1}}=\frac{1}{N^{\frac{n}{2}}}\mathbb{E}\prod_{k\in\{1,3,\ldots,n+1\}}\zeta_{i(k)}^{j\cdot v_{k}}.\]
Let $\pi$ be a partition of $\{1,3,\ldots,n+1\}$. Suppose that $\ker(i\upharpoonright_{\{1,3,\ldots,n+1\}})=\pi$. For each $S\in\pi$, all the $i(k)$ are same for $k\in S$ and we denote this value by $i(S)$. Thus,
\[\mathbb{E}\prod_{k=1}^{n}(X_{N}^{\epsilon_{k}})_{i(k),i(k+1)}=\frac{1}{N^{\frac{n}{2}}}\mathbb{E}\prod_{S\in\pi}\zeta_{i(S)}^{j\cdot(\sum_{k\in S}v_{k})}.\]
By Lemma \ref{P6},
\[\sum_{j:\{2,4,\ldots,n\}\to\{1,\ldots,N\}}\left|\mathbb{E}\prod_{k=1}^{n}(X_{N}^{\epsilon_{k}})_{i(k),i(k+1)}\right|\leq\frac{1}{N^{\frac{n}{2}}}N^{\frac{n}{2}-\dim\lspan\{\sum_{k\in S}v_{k}:S\in\pi\}}.\]
If $a_{1},a_{3},\ldots,a_{n+1}$ are scalars satisfying
\[a_{1}v_{1}+a_{3}v_{3}+\ldots+a_{n+1}v_{n+1}=0,\]
then $a_{1}=a_{3}=\ldots=a_{n+1}$. Thus, by Lemma \ref{P7}, $\dim\lspan\{\sum_{k\in S}v_{k}:S\in\pi\}\geq|\pi|-1$ so
\begin{eqnarray*}
\sum_{j:\{2,4,\ldots,n\}\to\{1,\ldots,N\}}\left|\mathbb{E}\prod_{k=1}^{n}(X_{N}^{\epsilon_{k}})_{i(k),i(k+1)}\right|&\leq&
\frac{1}{N^{\frac{n}{2}}}N^{\frac{n}{2}-(|\pi|-1)}\\&=&N^{1-|\pi|}.
\end{eqnarray*}
Considering all the cases when  $\{1\}$ is or is not a singleton block and $\{n+1\}$ is or is not a singleton block of $\pi$,
we see that the number of choices of $i(3),i(5),\ldots,i(n-1)$ such that $\ker(i\upharpoonright_{\{1,3,\ldots,n+1\}})=\pi$
is at most $N^{|\pi|-2}$ and, thus,
\[\sum_{\substack{i:\{2,\ldots,n\}\to\{1,\ldots,N\}\\\ker(i\upharpoonright_{\{1,3,\ldots,n+1\}})=\pi}}\left|\mathbb{E}\prod_{k=1}^{n}(X_{N}^{\epsilon_{k}})_{i(k),i(k+1)}\right|\leq N^{|\pi|-2}N^{1-|\pi|}=\frac{1}{N}.\]
Summing over all partitions $\pi$ of $\{1,3,\ldots,n+1\}$, we have
\[\sum_{i:\{2,\ldots,n\}\to\{1,\ldots,N\}}\left|\mathbb{E}\prod_{k=1}^{n}(X_{N}^{\epsilon_{k}})_{i(k),i(k+1)}\right|\leq\frac{C_n}{N}.\]
So by \eqref{Expandineq},
\[|(\mathbb{E}Z_{N})_{i(1),i(n+1)}|\leq\frac{C_n}{N}.\]
So each entry in $\mathbb{E}(Z_{N}-\mathrm{diag}Z_{N})$ has absolute value at most $C_n/N$.
From this, the result follows easily.
Indeed, each entry of 
\[
(\mathbb{E}Z_{N}-\mathbb{E}\circ\mathrm{diag}Z_{N})^{*}(\mathbb{E}Z_{N}-\mathbb{E}\circ\mathrm{diag}Z_{N})
\]
has absolute value at most $C_n^2/N$.
Taken to the $p$-th power, every entry has absolute value at most $C_n^{2p}/N$, and the result~\eqref{eq:TrCpn} follows
with constant $C=C_n^{2p}$.
\end{proof}
Using a similar argument as in the proof of Lemma \ref{1*}, (essentially, by treating also the case $i(n+1)=i(1)$ in that proof)
one obtains the following lemma.
\begin{lemma}[Compare with Proposition 1 in \cite{RyanDebbah}]\label{Bound}
For every integer $p\geq 1$, we have $\mathbb{E}\circ\mathrm{Tr}(X_{N}^{*}X_{N})^{p}\leq CN$, where $C$ depends only on $p$.
\end{lemma}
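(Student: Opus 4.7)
The plan is to run the argument of Lemma~\ref{1*} almost verbatim, with only the diagonal constraint $i(n+1)=i(1)$ imposed so as to form the trace. Using the trace identity $\mathrm{Tr}((X_N^*X_N)^p)=\mathrm{Tr}((X_NX_N^*)^p)$, I would set $n=2p$ and $\epsilon_k=1$ for odd $k$, $\epsilon_k=*$ for even $k$, so that
\[
\mathbb{E}\circ\mathrm{Tr}((X_NX_N^*)^p)=\sum_{i:\{1,\ldots,n\}\to\{1,\ldots,N\}}\mathbb{E}\prod_{k=1}^{n}(X_N^{\epsilon_k})_{i(k),i(k+1)},
\]
with the cyclic convention $i(n+1)=i(1)$. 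This places us in precisely the setting of the computation in Lemma~\ref{1*}, but with $i(1)$ and $i(n+1)$ identified.

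Repeating that monomial computation under this identification, the two boundary factors $\zeta_{i(1)}^{i(2)}$ and $\zeta_{i(n+1)}^{-i(n)}$ merge into the single factor $\zeta_{i(1)}^{i(2)-i(n)}$. Correspondingly, the two boundary vectors $v_1=e_2$ and $v_{n+1}=-e_n$ used in Lemma~\ref{1*} collapse into the single vector $v_1=e_2-e_n$, leaving the $p$ vectors $v_{2l-1}=e_{2l}-e_{2l-2}$ ($l=1,\ldots,p$, with $e_0:=e_n$) in $\mathbb{R}^{\{2,4,\ldots,n\}}$. Their only linear relation is that they sum to zero, so Lemma~\ref{P7} yields $\dim\lspan\{\sum_{k\in S}v_k:S\in\pi\}\geq|\pi|-1$ for every partition $\pi$ of the odd-index set $\{1,3,\ldots,n-1\}$.

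Combining the counting as in Lemma~\ref{1*}: for each partition $\pi$, the $N^{-p}$ prefactor, the at most $N^{|\pi|}$ choices of odd indices with kernel partition exactly $\pi$, and the Lemma~\ref{P6} bound of $N^{p-(|\pi|-1)}$ for the sum over the $p$ even indices multiply to $N^{-p}\cdot N^{|\pi|}\cdot N^{p-|\pi|+1}=N$. Summing over the (constantly many, i.e.\ $B_p$-many) partitions of a $p$-element set yields the asserted bound $C_pN$.

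The only real subtlety compared to Lemma~\ref{1*} is the collapse of the two outer vectors into one: this reduces the number of linearly-relevant vectors from $p+1$ to $p$, so the per-entry bound of order $1/N$ produced there is promoted here to a per-diagonal-contribution bound of order $1$, which after summing the $N$ diagonal entries gives the final factor $N$. This is exactly the modification flagged by the parenthetical remark preceding the statement, and once the bookkeeping of the collapsed vector is done correctly the rest of the proof is mechanical.
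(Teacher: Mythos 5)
Your proposal is correct and takes essentially the same route the paper intends: the paper proves Lemma~\ref{Bound} by the parenthetical remark ``by treating also the case $i(n+1)=i(1)$'' in the argument of Lemma~\ref{1*}, which is exactly the merging of the boundary vectors $e_2$ and $-e_n$ into $e_2-e_n$ that you carry out, yielding the rank bound $|\pi|-1$ on the $p$-element odd-index set and hence the count $N^{-p}\cdot N^{|\pi|}\cdot N^{p-|\pi|+1}=N$ per partition.
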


\begin{lemma}\label{Offdiag}
Let $\epsilon_{1},\ldots,\epsilon_{n}\in\{1,*\}$ be alternating. Let $d_{1},\ldots,d_{n}$ be deterministic diagonal $N\times N$ matrices of norm at most 1. Let
\[Z_{N}=\prod_{k=1}^{n}d_{k}X_{N}^{\epsilon_{k}}\]
Then for every integer $p\geq 1$,
\[|Z_{N}|_{2p}\leq C\quad\text{ and }\quad|\mathbb{E}Z_{N}-\mathbb{E}\circ\mathrm{diag}Z_{N}|_{2p}\leq\frac{C}{N^{\frac{1}{2p}}},\]
where $C$ depends only on $n$ and $p$.
\end{lemma}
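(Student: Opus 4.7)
The plan is to split the argument according to the parity of $n$ and the value of $\epsilon_1$, noting that two of the three cases collapse immediately. When $n$ is odd, Lemma~\ref{Odd} gives $\mathbb{E}Z_N=0$, and hence also $\mathbb{E}\circ\mathrm{diag}\,Z_N=0$, so both sides of the second inequality vanish. When $n$ is even with $\epsilon_1=*$, Lemma~\ref{*1} says $\mathbb{E}Z_N$ is already diagonal, so $\mathbb{E}\circ\mathrm{diag}\,Z_N=\mathrm{diag}(\mathbb{E}Z_N)=\mathbb{E}Z_N$ and the difference again vanishes.

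The substantive case is $n$ even with $\epsilon_1=1$. Here I would directly apply Lemma~\ref{1*}, which furnishes an $N$-independent bound
\[
\mathrm{Tr}\bigl((\mathbb{E}Z_N-\mathbb{E}\circ\mathrm{diag}\,Z_N)^*(\mathbb{E}Z_N-\mathbb{E}\circ\mathrm{diag}\,Z_N)\bigr)^p\le C.
\]
Dividing by $N$ passes from the unnormalized trace $\mathrm{Tr}$ to the normalized trace $\mathrm{tr}$, and extracting the $(2p)$-th root yields $|\mathbb{E}Z_N-\mathbb{E}\circ\mathrm{diag}\,Z_N|_{2p}\le C'/N^{1/(2p)}$, as desired.

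For the uniform bound $|Z_N|_{2p}\le C$, the plan is to view $Z_N$ as a product of $2n$ alternating deterministic and random factors $d_1,X_N^{\epsilon_1},d_2,X_N^{\epsilon_2},\ldots,d_n,X_N^{\epsilon_n}$ and apply the non-commutative H\"older inequality (Lemma~\ref{P18}) with every factor assigned Schatten index $4pn$, so that $2n\cdot\tfrac{1}{4pn}=\tfrac{1}{2p}$. Each $|d_k|_{4pn}\le 1$ since $d_k$ is diagonal with entries of modulus at most $1$. Each $|X_N^{\epsilon_k}|_{4pn}$ is $O(1)$ by Lemma~\ref{Bound}, which gives $\mathbb{E}\circ\mathrm{tr}\bigl((X_N^*X_N)^{2pn}\bigr)\le C$; for the case $\epsilon_k=*$, one uses that $X_N^*X_N$ and $X_NX_N^*$ share the same nonzero eigenvalues, so $|X_N^*|_{4pn}=|X_N|_{4pn}$.

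No genuine obstacle is anticipated: all the combinatorial work sits inside Lemmas~\ref{1*} and~\ref{Bound}, and the present lemma is essentially a repackaging of those two bounds via the (non-commutative) H\"older inequality.
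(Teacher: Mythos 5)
Your proof is correct and follows essentially the same route as the paper: Lemma~\ref{Bound} plus non-commutative H\"older for the first bound, and the three-case split via Lemmas~\ref{Odd}, \ref{*1}, \ref{1*} (with the normalization $\mathrm{tr}=N^{-1}\mathrm{Tr}$ to extract the $N^{-1/(2p)}$) for the second. The only cosmetic difference is in the first inequality, where the paper groups each $d_kX_N^{\epsilon_k}$ as a single factor with Schatten index $2pn$ and peels off $\|d_k\|\le 1$ via the submultiplicativity $|d_kA|_q\le\|d_k\|\,|A|_q$, whereas you split into $2n$ factors all at index $4pn$; both are valid applications of Lemma~\ref{P18}.
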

\begin{proof}
By Lemma \ref{Bound}, for every integer $q\geq 1$,
\[|X_{N}|_{2q}\leq C_q,\]
where $C_q$ depends only on $q$.
Thus, taking $p_{1}=\cdots=p_{n}=2pn$, $r=2p$ in Lemma \ref{P18}, we have
\begin{eqnarray*}
|Z_{N}|_{2p}=\left|\prod_{k=1}^{n}d_{k}X_{N}^{\epsilon_{k}}\right|_{2p}&\leq&\prod_{k=1}^{n}|d_{k}X_{N}^{\epsilon_{k}}|_{2pn}\\&\leq&
\prod_{k=1}^{n}\|d_{k}\||X_{N}^{\epsilon_{k}}|_{2pn}\leq\prod_{k=1}^{n}|X_{N}^{\epsilon_{k}}|_{2pn}\leq\prod_{k=1}^{n}C_{2pn}.
\end{eqnarray*}
This proves the first inequality. The other inequality follows by combining Lemmas \ref{Odd}, \ref{*1} and \ref{1*}.
\end{proof}

\medskip
We are now ready to prove Proposition~\ref{OneProp}.
We first prove a weaker version of it, with $\mathbb{E}\circ\mathrm{diag}$ replaced by $\mathbb{E}$.
The convention regarding ordering in products is described in Definition~\ref{def:rdiag}.
\begin{lemma}\label{OneLem}
Let $n\geq 1$. Let $\epsilon_{1},\ldots,\epsilon_{n}\in\{1,*\}$. Let $\sigma$ be the interval partition of $\{1,\ldots,n\}$ defined by
\[k\stackrel{\sigma}{\sim}k+1\Longleftrightarrow\epsilon_{k}\neq\epsilon_{k+1}\]
for $k\in\{1,\ldots,n-1\}$. Let $d_{1},\ldots,d_{n}$ be deterministic diagonal $N\times N$ matrices of norm at most 1. Then
\[\left|\mathbb{E}\circ\mathrm{Tr}\prod_{S\in\sigma}\left(\prod_{k\in S}d_{k}X_{N}^{\epsilon_{k}}-\mathbb{E}\left(\prod_{k\in S}d_{k}X_{N}^{\epsilon_{k}}\right)\right)\right|\leq C\sqrt{N},\]
where $C$ depends only on $n$.
\end{lemma}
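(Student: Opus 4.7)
The plan is to expand the unnormalized trace in matrix coordinates and use a $T$-subset expansion for the deviations. For each block $S=\{a,\ldots,b\}\in\sigma$,
\[
\Bigl(\prod_{k\in S}d_kX_N^{\eps_k}\Bigr)_{i(a),i(b+1)}=\sum_{i(a+1),\ldots,i(b)}\prod_{k\in S}(d_k)_{i(k),i(k)}(X_N^{\eps_k})_{i(k),i(k+1)},
\]
so after expanding each factor $\prod_{k\in S}d_kX_N^{\eps_k}-\mathbb{E}(\cdot)$ binomially and taking the cyclic trace followed by $\mathbb{E}$, we obtain
\[
\sum_{T\subseteq\sigma}(-1)^{|T|}\sum_{i:\{1,\ldots,n\}\to\{1,\ldots,N\}}\Bigl(\prod_{k=1}^n(d_k)_{i(k),i(k)}\Bigr)\Phi_T(i),
\]
where $i(n+1):=i(1)$ and
\[
\Phi_T(i)=\prod_{S\in T}\mathbb{E}\!\prod_{k\in S}(X_N^{\eps_k})_{i(k),i(k+1)}\;\cdot\;\mathbb{E}\!\prod_{S\notin T}\prod_{k\in S}(X_N^{\eps_k})_{i(k),i(k+1)}.
\]
Since each entry $(X_N^{\eps_k})_{i(k),i(k+1)}$ equals $N^{-1/2}$ times a $\zeta$-power, $\Phi_T(i)$ factors as $N^{-n/2}$ times a product of expectations of $\zeta$-monomials, which by Lemma~\ref{P1} vanishes unless the exponents satisfy a system of linear equations in $i$.

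Next, group the inner sum by $\rho:=\ker i$ and invoke the decoupling cancellation. If $|\rho\vee\sigma|>\tfrac12|\sigma|$ then, by Lemma~\ref{P10}, there exists $S_0\in\sigma$ with $\{i(k):k\in S_0\}$ disjoint from $\{i(k):k\notin S_0\}$. The $\zeta$'s appearing in block $S_0$ are therefore independent of all other $\zeta$'s, so
\[
\mathbb{E}\!\prod_{S\notin T}\prod_{k\in S}(X_N^{\eps_k})_{i(k),i(k+1)}=\mathbb{E}\!\prod_{k\in S_0}(X_N^{\eps_k})_{i(k),i(k+1)}\cdot\mathbb{E}\!\prod_{S\notin T\cup\{S_0\}}\prod_{k\in S}(X_N^{\eps_k})_{i(k),i(k+1)}
\]
whenever $S_0\notin T$. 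Comparing the term indexed by $T$ (with $S_0\notin T$) to the term indexed by $T\cup\{S_0\}$, the two $\Phi$-values agree while the signs $(-1)^{|T|}$ are opposite, so they cancel in pairs. Only those $(\rho,i)$ with $|\rho\vee\sigma|\le\tfrac12|\sigma|$ contribute.

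For the surviving terms, bound the number of admissible $i$ with $\ker i=\rho$ using Lemma~\ref{P6} together with the dimension estimate of Lemma~\ref{P7}, applied with $\pi_1=\rho$ and $\pi_2$ the partition coming from the Lemma~\ref{P1} linear constraints on the exponent vectors $v_k$. The block structure of $\pi_2$ is identified via Lemmas~\ref{P11}--\ref{P14}: its equivalence classes are the sets $L(I)$, $I\in\sigma$, together with the singletons from $\{2,\ldots,n\}\setminus(U\cup(U-1)\cup(U+1))$. Lemma~\ref{P8} relates $|\rho\vee\sigma|$ to the relevant restricted joins, and Lemma~\ref{P7} yields $\dim\lspan\{\sum_{k\in S}v_k:S\in\rho\}\ge|\rho|-|\rho\vee\sigma|$. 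Combining the $N^{-n/2}$ normalization with these dimension estimates and with the constraint $|\rho\vee\sigma|\le\tfrac12|\sigma|$, the contribution for each surviving $\rho$ is at most $O(N^{1/2})$; summing over the finitely many choices of $\rho$ and $T$ yields $C\sqrt N$ with $C$ depending only on $n$.

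The main obstacle will be the cancellation step: one must verify carefully that when $S_0\in\sigma$ decouples, the expectation factors exactly as claimed and that the pairing $T\leftrightarrow T\cup\{S_0\}$ produces termwise cancellation uniformly over the surviving indices. The secondary difficulty is the bookkeeping required to extract exactly $\sqrt N$ (rather than $1$ or $N$) from the dimension count; this is precisely what the structural Lemmas~\ref{P11}--\ref{P14}, linking the span of the vectors $v_k$ to the alternating partition $\sigma$, have been designed to handle.
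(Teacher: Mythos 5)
Your overall architecture (expand the trace in matrix coordinates, sum by $\ker i$, split into a vanishing case and a dimension-count case) matches the paper's, and your binomial expansion over $T\subseteq\sigma$ with $T\leftrightarrow T\cup\{S_0\}$ pairwise cancellation is a reasonable alternative to the paper's use of Lemma~\ref{P2}. However, the proposal has a genuine gap at the decisive step.

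\textbf{The independence claim is applied with the wrong partition.} You apply Lemma~\ref{P10} to $\ker i$ and $\sigma$, concluding there is a block $S_0\in\sigma$ with $\{i(k):k\in S_0\}$ disjoint from $\{i(k):k\notin S_0\}$, and then assert that the $\zeta$'s appearing in the factor $\prod_{k\in S_0}(X_N^{\eps_k})_{i(k),i(k+1)}$ are independent of the rest. This does not follow: the random variables appearing in block $S_0$ are $\{\zeta_{i(l)}:l\in L(S_0)\}$, where $L(S_0)=\{k\in S_0:\eps_k=1\}\cup\{k+1:k\in S_0,\eps_k=*\}$ as in Lemma~\ref{P13}, and $L(S_0)$ is \emph{not} a subset of $S_0$ (it can reach one index to the right). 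Concretely, take $\eps=(1,*,1,*,*)$, so $\sigma=\{\{1,2,3,4\},\{5\}\}$, and take $i=(1,2,1,2,3)$ with $i(6)=i(1)=1$. Then $\ker i\vee\sigma=\sigma$ has $2>\tfrac12|\sigma|=1$ blocks, so your Case II applies, and both choices of $S_0$ satisfy your disjointness condition. But the $\zeta$-indices for block $\{5\}$ are $\{i(6)\}=\{1\}$ and for block $\{1,2,3,4\}$ are $\{i(1),i(3),i(5)\}=\{1,3\}$, both of which contain $1$. The two block factors are therefore \emph{not} independent and the asserted factorization of the expectation, and hence the pairwise cancellation, fails. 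The paper handles exactly this issue by passing to the restriction $i_L$ (with $L=\{1,\ldots,n+1\}\setminus U$) and applying Lemma~\ref{P10} to $\ker i_L$ against the partition whose blocks are the sets $L(I)$ (plus possibly the singletons $\{1\},\{n+1\}$); Lemmas~\ref{P13} and~\ref{P14} are what establish that this is the correct partition to use. You cite these lemmas only in the dimension-count step, but they are indispensable here as well.

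\textbf{The dimension count is also misstated.} The vectors $v_k$ of Lemma~\ref{P11} are indexed by $(U-1)\cup(U+1)\subseteq\{1,\ldots,n+1\}$ and live in $\Reals^U$, so one cannot simply take $\pi_1=\rho=\ker i$ in Lemma~\ref{P7}: $\rho$ is a partition of $\{1,\ldots,n\}$ and does not index these vectors. The paper uses $\pi_1=(\ker i_L)\upharpoonright_{(U-1)\cup(U+1)}$, then Lemma~\ref{P8} to convert $|\pi_1\vee\pi_2|$ into $|\ker i_L\vee\rho'|$ where $\rho'$ is the $L(I)$-based partition of $L$. Your claimed bound $\dim\lspan\{\sum_{k\in S}v_k:S\in\rho\}\ge|\rho|-|\rho\vee\sigma|$ does not follow from the cited lemmas, since $\sigma$ (with blocks $I$, as a partition of $\{1,\ldots,n\}$) is not the partition $\pi_2$ (with blocks $L(I)$, as a partition of $(U-1)\cup(U+1)$). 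Both the case split and the exponent count therefore need to be redone after the shift from $\{I\}$ to $\{L(I)\}$ is made.
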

\begin{proof}
Since $\sigma$ is an interval partition, we can expand
\begin{align*}
\mathbb{E}\circ\mathrm{Tr}&\prod_{S\in\sigma}\left(\prod_{k\in S}d_{k}X_{N}^{\epsilon_{k}}-\mathbb{E}\left(\prod_{k\in S}d_{k}X_{N}^{\epsilon_{k}}\right)\right)\\
=&\sum_{\substack{i:\{1,\ldots,n+1\}\to\{1,\ldots,N\}\\i(n+1)=i(1)}}
\begin{aligned}[t]
\Bigg(\mathbb{E}&\prod_{S\in\sigma}\Bigg(\prod_{k\in S}(d_{k})_{i(k),i(k)}(X_{N}^{\epsilon_{k}})_{i(k),i(k+1)} \\
&-\mathbb{E}\left(\prod_{k\in S}(d_{k})_{i(k),i(k)}(X_{N}^{\epsilon_{k}})_{i(k),i(k+1)}\right)\Bigg)\Bigg)
\end{aligned}
\\=&\sum_{\substack{i:\{1,\ldots,n+1\}\to\{1,\ldots,N\}\\i(n+1)=i(1)}}\prod_{k=1}^{n}(d_{k})_{i(k),i(k)}
\begin{aligned}[t]
\Bigg(\mathbb{E}&\prod_{S\in\sigma}\Bigg(\prod_{k\in S}(X_{N}^{\epsilon_{k}})_{i(k),i(k+1)} \\
&-\mathbb{E}\left(\prod_{k\in S}(X_{N}^{\epsilon_{k}})_{i(k),i(k+1)}\right)\Bigg).
\end{aligned}
\end{align*}
Since the $d_{k}$ have norms at most 1, it follows that
\begin{multline}\label{Onealign}
\left|\mathbb{E}\circ\mathrm{Tr}\prod_{S\in\sigma}\left(\prod_{k\in S}d_{k}X_{N}^{\epsilon_{k}}
-\mathbb{E}\left(\prod_{k\in S}d_{k}X_{N}^{\epsilon_{k}}\right)\right)\right| \\
\leq
\sum_{i:\{1,\ldots,n+1\}\to\{1,\ldots,N\}}\delta_{i(n+1),i(1)}
\begin{aligned}[t]
\Bigg|\mathbb{E}&\prod_{S\in\sigma}\Bigg(\prod_{k\in S}(X_{N}^{\epsilon_{k}})_{i(k),i(k+1)} \\
&-\mathbb{E}\left(\prod_{k\in S}(X_{N}^{\epsilon_{k}})_{i(k),i(k+1)}\right)\Bigg)\Bigg|.
\end{aligned}
\end{multline}
For each $i:\{1,\ldots,n+1\}\to\{1,\ldots,N\}$, by Lemma \ref{P2}, we have
\begin{multline}\label{Estimate}
\left|\mathbb{E}\prod_{S\in\sigma}\left(\prod_{k\in S}(X_{N}^{\epsilon_{k}})_{i(k),i(k+1)}-\mathbb{E}\left(\prod_{k\in S}(X_{N}^{\epsilon_{k}})_{i(k),i(k+1)}\right)\right)\right| \\
\leq\mathbb{E}\prod_{S\in\sigma}\left(\prod_{k\in S}(X_{N}^{\epsilon_{k}})_{i(k),i(k+1)}\right)
=\mathbb{E}\prod_{k=1}^{n}(X_{N}^{\epsilon_{k}})_{i(k),i(k+1)}.
\end{multline}
Let\[U=\{2\leq k\leq n:\epsilon_{k-1}=1\text{ and }\epsilon_{k}=*\}.\]
Let $Y_{N}=\sqrt{N}X_{N}$. Then
\begin{align*}
\mathbb{E}\prod_{k=1}^{n}&(Y_{N}^{\epsilon_{k}})_{i(k),i(k+1)}\\
&=\mathbb{E}\prod_{k\in U}(Y_{N}^{\epsilon_{k}})_{i(k),i(k+1)}\prod_{k\in U-1}(Y_{N}^{\epsilon_{k}})_{i(k),i(k+1)}
\prod_{k\in\{1,\ldots,n\}\backslash(U\cup(U-1))}(Y_{N}^{\epsilon_{k}})_{i(k),i(k+1)} \displaybreak[2]\\
&=\mathbb{E}\prod_{k\in U}(Y_{N}^{\epsilon_{k}})_{i(k),i(k+1)}\prod_{k\in U}(Y_{N}^{\epsilon_{k-1}})_{i(k-1),i(k)}
\prod_{k\in\{1,\ldots,n\}\backslash(U\cup(U-1))}(Y_{N}^{\epsilon_{k}})_{i(k),i(k+1)}\displaybreak[2]\\
&=\mathbb{E}\prod_{k\in U}\zeta_{i(k+1)}^{-i(k)}\prod_{k\in U}\zeta_{i(k-1)}^{i(k)}
\prod_{k\in\{1,\ldots,n\}\backslash(U\cup(U-1))}(Y_{N}^{\epsilon_{k}})_{i(k),i(k+1)}\displaybreak[2]\\
&=\mathbb{E}\prod_{k\in U+1}\zeta_{i(k)}^{-i(k-1)}\prod_{k\in U-1}\zeta_{i(k)}^{i(k+1)}
\prod_{k\in\{1,\ldots,n\}\backslash(U\cup(U-1))}(Y_{N}^{\epsilon_{k}})_{i(k),i(k+1)} \displaybreak[2]\\
&=\mathbb{E}\Bigg(
\begin{aligned}[t]
&\prod_{k\in (U+1)\backslash(U-1)}\zeta_{i(k)}^{-i(k-1)}\prod_{(U-1)\backslash(U+1)}\zeta_{i(k)}^{i(k+1)}\\
&\hspace{2em}\times\prod_{k\in (U+1)\cap(U-1)}\zeta_{i(k)}^{i(k+1)-i(k-1)}\prod_{k\in\{1,\ldots,n\}\backslash(U\cup(U-1))}(Y_{N}^{\epsilon_{k}})_{i(k),i(k+1)}\Bigg).
\end{aligned}
\end{align*}
Let $L=\{1,\ldots,n+1\}\backslash U$. Note that by the definition of $U$, $(U-1)\cup(U+1)\subset L$. Also, if $k\in\{1,\ldots,n+1\}\backslash(U\cup(U-1))$ then $k\in L$ and $k+1\in L$.

Let $i_U:U\to\{1,\ldots,N\}$ be the restriction of $i$ to $U$. Let $i_{L}:L\to\{1,\ldots,N\}$ be the restriction of $i$ to $L$.
With $v_{k}\in\Reals^U$ defined as in Lemma \ref{P11}, we have
\[\mathbb{E}\prod_{k=1}^{n}(Y_{N}^{\epsilon_{k}})_{i(k),i(k+1)}=\mathbb{E}\prod_{k\in(U-1)\cup(U+1)}\zeta_{i_{L}(k)}^{i_U\cdot v_{k}}\prod_{k\in\{1,\ldots,n\}\backslash(U\cup(U-1))}(Y_{N}^{\epsilon_{k}})_{i_{L}(k),i_{L}(k+1)},\]
where we think of $i_U$ as belonging to $\Reals^U$.
Let $\pi$ be a partition of $L$. Suppose that $\ker i_{L}=\pi$. Let $\pi_{1}=\pi\upharpoonright_{(U-1)\cup(U+1)}$. For each block $S\in\pi_{1}$, all the $i_{L}(k)$ are the same for $k\in S$ and we denote this value by $i_{L}(S)$. It follows that
\begin{eqnarray*}
\mathbb{E}\prod_{k=1}^{n}(Y_{N}^{\epsilon_{k}})_{i(k),i(k+1)}&=&\mathbb{E}\prod_{S\in\pi_{1}}\prod_{k\in S}\zeta_{i_{L}(k)}^{i_U\cdot v_{k}}\prod_{k\in\{1,\ldots,n\}\backslash(U\cup(U-1))}(Y_{N}^{\epsilon_{k}})_{i_{L}(k),i_{L}(k+1)}\\&=&
\mathbb{E}\prod_{S\in\pi_{1}}\zeta_{i_{L}(S)}^{i_U\cdot(\sum_{k\in S}v_{k})}\prod_{k\in\{1,\ldots,n\}\backslash(U\cup(U-1))}(Y_{N}^{\epsilon_{k}})_{i_{L}(k),i_{L}(k+1)}.
\end{eqnarray*}
Note that the term $\prod_{k\in\{1,\ldots,n\}\backslash(U\cup(U-1))}(Y_{N}^{\epsilon_{k}})_{i_{L}(k),i_{L}(k+1)}$ is a product of the random variables $(\zeta_{j})_{j\in L}$ and their inverses, possibly with repetition.
Thus by Lemma \ref{P6}, fixing $i_{L}$ and summing over all $i_U$, we have
\[\sum_{i_U:U\to\{1,\ldots,N\}}\big|\mathbb{E}\prod_{k=1}^{n}(Y_{N}^{\epsilon_{k}})_{i(k),i(k+1)}\big|\leq N^{|U|-\dim\lspan\{\sum_{k\in S}v_{k}:S\in\pi_{1}\}}.\]
Summing now over all $i_{L}$ with $\ker i_{L}=\pi$, we obtain
\begin{equation}\label{ExpandBound}
\sum_{\{i_L:\ker i_{L}=\pi\}}\;\sum_{i_U:U\to\{1,\ldots,N\}}\big|\mathbb{E}\prod_{k=1}^{n}(Y_{N}^{\epsilon_{k}})_{i(k),i(k+1)}\big|\leq N^{|\pi|+|U|-\dim\lspan\{\sum_{k\in S}v_{k}:S\in\pi_{1}\}}.
\end{equation}
Let $\sim$ be the equivalence relation on $(U-1)\cup(U+1)$ generated by $l-1\sim l+1$ $\forall l\in U$. Let $\pi_{2}$ be the partition of $(U-1)\cup(U+1)$ that corresponds to $\sim$. By Lemma \ref{P11} and Lemma \ref{P7},
\[\dim\lspan\{\sum_{k\in S}v_{k}:S\in\pi_{1}\}\geq|\pi_{1}|-|\pi_{1}\vee\pi_{2}|.\]
Thus,
\begin{align}\label{PartitionEstimate}
|\pi|-\dim\lspan\{&\sum_{k\in S}v_{k}:S\in\pi_{1}\}\nonumber\leq|\pi|-|\pi_{1}|+|\pi_{1}\vee\pi_{2}|\nonumber\\
&=|\pi|-|\pi\upharpoonright_{(U-1)\cup(U+1)}|+|(\pi\upharpoonright_{(U-1)\cup(U+1)})\vee\pi_{2}|\nonumber\\
&=|\pi\vee(\pi_{2}\cup\{\{l\}:l\in L\backslash((U-1)\cup(U+1))\})|,
\end{align}
where the last equation follows from Lemma \ref{P8} by taking $K=(U-1)\cup(U+1)$ and $\lambda=\pi_{2}$.

\smallskip
\noindent
Case I: $|\pi\vee(\pi_{2}\cup\{\{l\}:l\in L\backslash((U-1)\cup(U+1))\})|\leq\frac{1}{2}(n+1-2|U|)$.

In this case, by \eqref{PartitionEstimate},
\[|\pi|-\dim\lspan\{\sum_{k\in S}v_{k}:S\in\pi_{1}\}\leq\frac{1}{2}(n+1-2|U|).\]
Thus, by \eqref{ExpandBound},
\[\sum_{\ker i_{L}=\pi}\;\sum_{i_U:U\to\{1,\ldots,N\}}\big|\mathbb{E}\prod_{k=1}^{n}(Y_{N}^{\epsilon_{k}})_{i(k),i(k+1)}\big|\leq N^{|U|+\frac{1}{2}(n+1-2|U|)}.\]
Since $X_{N}=\frac{1}{\sqrt{N}}Y_{N}$,
\[\sum_{\ker i_{L}=\pi}\;\sum_{i_U:U\to\{1,\ldots,N\}}\big|\mathbb{E}\prod_{k=1}^{n}(X_{N}^{\epsilon_{k}})_{i(k),i(k+1)}\big|\leq N^{|U|+\frac{1}{2}(n+1-2|U|)-\frac{n}{2}}=\sqrt{N}.\]
By \eqref{Estimate},
\[\sum_{\ker i_{L}=\pi}\;\sum_{i_U:U\to\{1,\ldots,N\}}\left|\mathbb{E}\prod_{S\in\sigma}\left(\prod_{k\in S}(X_{N}^{\epsilon_{k}})_{i(k),i(k+1)}-\mathbb{E}\left(\prod_{k\in S}(X_{N}^{\epsilon_{k}})_{i(k),i(k+1)}\right)\right)\right|\leq\sqrt{N}.\]

\smallskip
\noindent
Case II: $|\pi\vee(\pi_{2}\cup\{\{l\}:l\in L\backslash((U-1)\cup(U+1))\})|>\frac{1}{2}(n+1-2|U|)$.

By Lemma \ref{P12}, $|\pi_{2}|\leq|(U+1)\backslash(U-1)|$ so
\begin{align*}
|\pi_{2}\cup\{\{l\}:l\in L\backslash((U-1)&\cup(U+1))\}|
=|\pi_{2}|+|L\backslash((U-1)\cup(U+1))|\\
&\leq|(U+1)\backslash(U-1)|+|L|-|(U-1)\cup(U+1)|\\
&=|L|-|U-1|=|L|-|U|=n+1-2|U|.
\end{align*}
By Lemma \ref{P10}, $\pi_{2}\cup\{\{l\}:l\in L\backslash((U-1)\cup(U+1))\}$ contains a block $S_{1}$ such that
\begin{equation}\label{Singleton}
\{i_{L}(l):l\in S_{1}\}\cap\{i_{L}(l):l\in L\backslash S_{1}\}=\emptyset
\end{equation}
If $S_{1}\in\pi_{2}$ then by Lemma \ref{P14}\ref{it:P14.1},
\begin{equation}\label{LB}
S_{1}=L(S_{0})=\{k\in S_{0}:\epsilon_{k}=1\}\cup\{k+1:k\in S_{0}\text{ and }\epsilon_{k}=*\}
\end{equation}
for some $S_{0}\in\sigma$.
If $S_{1}=\{l\}$ for some $l\in L\backslash((U-1)\cup(U+1))$ and $l\neq 1,n+1$, then by Lemma \ref{P14}\ref{it:P14.2},
$S_{1}$ is also the form \eqref{LB}.
If $S_{1}=\{1\}$ or $\{n+1\}$ then since $1$ and $n+1$ are both in $L$ (by the definition of $U$), it follows from \eqref{Singleton} that $i_{L}(1)\neq i_{L}(n+1)$ and so
\begin{multline}\label{eq:sumE0}
\sum_{\ker i_{L}=\pi}\;\sum_{i_U:U\to\{1,\ldots,N\}}\delta_{i(n+1),i(1)}
\Bigg|\mathbb{E}\prod_{S\in\sigma}\Bigg(\prod_{k\in S}(X_{N}^{\epsilon_{k}})_{i(k),i(k+1)} \\
-\mathbb{E}\left(\prod_{k\in S}(X_{N}^{\epsilon_{k}})_{i(k),i(k+1)}\right)\Bigg)\Bigg|=0.
\end{multline}

If $S_{1}$ is of the form \eqref{LB} then by \eqref{Singleton},
\[\{i_{L}(l):l\in L(S_{0})\}\cap\{i_{L}(l):l\in L\backslash L(S_{0})\}=\emptyset.\]
By Lemma \ref{P13}, $\displaystyle\cup_{S\neq S_{0}}L(S)\subset L\backslash L(S_{0})$. So
\[\{i_{L}(l):l\in L(S_{0})\}\cap\{i_{L}(l):l\in\cup_{S\neq S_{0}}L(S)\}=\emptyset.\]
Note that for each $S\in\sigma$, $\prod_{k\in S}(X_{N}^{\epsilon_{k}})_{i(k),i(k+1)}$ depends only on $\{\zeta_{i_{L}(l)}:l\in L(S)\}$.  Thus, the random variable
\[\prod_{k\in S_{0}}(X_{N}^{\epsilon_{k}})_{i(k),i(k+1)}\]
is independent of the random variables
\[\prod_{k\in S}(X_{N}^{\epsilon_{k}})_{i(k),i(k+1)},\quad S\neq S_{0}.\]
Thus,
\[\left|
\mathbb{E}\prod_{S\in\sigma}\left(\prod_{k\in S}(X_{N}^{\epsilon_{k}})_{i(k),i(k+1)}-\mathbb{E}\left(\prod_{k\in S}(X_{N}^{\epsilon_{k}})_{i(k),i(k+1)}\right)\right)\right|=0.\]
So again~\eqref{eq:sumE0} holds.

Combining the conclusions of Case I and Case II and summing over all partitions $\pi$ of $L$, we get
\begin{multline*}
\sum_{i_{L}:L\to\{1,\ldots,N\}}\sum_{i_U:U\to\{1,\ldots,N\}}\delta_{i(n+1),i(1)}\Bigg|
\mathbb{E}\prod_{S\in\sigma}\Bigg(\prod_{k\in S}(X_{N}^{\epsilon_{k}})_{i(k),i(k+1)} \\
-\mathbb{E}\left(\prod_{k\in S}(X_{N}^{\epsilon_{k}})_{i(k),i(k+1)}\right)\Bigg)\Bigg|\leq C\sqrt{N},
\end{multline*}
where $C$ is the number of partitions of $L$.
By \eqref{Onealign}, the result follows.
\end{proof}
\begin{proof}[Proof of Proposition \ref{OneProp}]
Let
\[Z_{N,I}=\prod_{k\in I}d_{k}X_{N}^{\epsilon_{k}}.\]
By Lemma \ref{Offdiag} and Lemma \ref{P15}, for every integer $p\geq 1$,
\[|Z_{N,I}-\mathbb{E}Z_{N,I}|_{2p}\leq C\]
and
\[|\mathbb{E}Z_{N,I}-\mathbb{E}\circ\mathrm{diag}Z_{N,I}|_{2p}\leq C\]
where $C$ depends only on $n$ and $p$. So by Lemma \ref{P19},
\begin{multline*}
\left|\mathbb{E}\circ\mathrm{tr}\prod_{I\in\sigma}(Z_{N,I}-\mathbb{E}Z_{N,I})
-\mathbb{E}\circ\mathrm{tr}\prod_{I\in\sigma}(Z_{N,I}-\mathbb{E}\circ\mathrm{diag}Z_{N,I})\right| \\[1ex]
\leq C\sup_{I\in\sigma}|\mathbb{E}Z_{N,I}-\mathbb{E}\circ\mathrm{diag}Z_{N,I}|_{2},
\end{multline*}
where $C$ depends only on $n$. By Lemma \ref{Offdiag} for $p=1$,
\[|\mathbb{E}Z_{N,I}-\mathbb{E}\circ\mathrm{diag}Z_{N,I}|_{2}\leq\frac{C}{\sqrt{N}},\quad I\in\sigma.\]
Therefore,
\[\left|\mathbb{E}\circ\mathrm{tr}\prod_{I\in\sigma}(Z_{N,I}-\mathbb{E}Z_{N,I})-\mathbb{E}\circ\mathrm{tr}\prod_{I\in\sigma}(Z_{N,I}-\mathbb{E}\circ\mathrm{diag}Z_{N,I})\right|\leq\frac{C}{\sqrt{N}}.\]
Thus, by Lemma \ref{OneLem}, the result follows.
\end{proof}

We are now ready to prove the main result.
For a C$^*$-algebra $B$,
by $B\langle X,X^*\rangle$ we denote the $*$-algebra of polynomials in noncommuting variables $X$ and $X^*$ with coefficients on $B$;
technically this is the algebraic free product of the three algebras $B$, $\Cpx[X]$ and $\Cpx[X^*]$ with amalgamation over the scalars.
We endow $B\langle X,X^*\rangle$ with the obvious $*$-operation.
\begin{thm}\label{thm:XRDiag}
Consider the C$^*$-algebra $B=C[0,1]$ with tracial state $\tau:B\to\Cpx$ obtained by integration using Lebesgue measure.
Let $\Ec:B\langle X,X^*\rangle\to B$ be the linear, self-adjoint, $B$-bimodular
map that is the identity on $B$ and so that with respect to $\Ec$, $X$ is $B$-valued
R-diagonal with even alternating moments given by, for every $n\in\Nats$ and $b_1,\ldots,b_{2n}\in B$,
\begin{align}
&\Ec(b_{1}X^*b_{2}Xb_{3}X^*b_{4}X\cdots b_{2n-1}X^*b_{2n}X) \label{eq:EX*X} \\
&\hspace{10em}
=\sum_{\pi\in\Pc(n)}b_1\Lambda_\pi(b_3,b_5,\ldots,b_{2n-1})\tau\big(\Gamma_\pi(b_2,b_4,\ldots,b_{2n})\big) \notag \\
&\Ec(b_{1}Xb_{2}X^*b_{3}Xb_{4}X^*\cdots b_{2n-1}Xb_{2n}X^*) \label{eq:EXX*} \\
&\hspace{10em}
=\sum_{\pi\in\Pc(n)}\Gamma_\pi(b_1,b_3,\ldots,b_{2n-1})\tau\big(\Lambda_\pi(b_2,b_4,\ldots,b_{2n-2})b_{2n}\big). \notag 
\end{align}
Then for all $n\in\Nats$, $\eps(1),\ldots,\eps(n)\in\{1,*\}$ and all $b_1,\ldots,b_n\in B$,
we have
\begin{equation}\label{eq:EXeps}
\lim_{N\to\infty}\Eb\circ\tr(D_N(b_1)X_N^{\eps(1)}D_N(b_2)X_N^{\eps(2)}\cdots D_N(b_n)X_N^{\eps(n)})
=\tau\circ\Ec(b_1X^{\eps(1)}b_2X^{\eps(2)}\cdots b_nX^{\eps(n)}),
\end{equation}
where, 
for $b\in B$, $D_N(b)$ is the scalar diagonal matrix
\[
D_N(b)=\diag\big(b(\frac1N),b(\frac2N),\ldots,b(\frac NN)\big).
\]
\end{thm}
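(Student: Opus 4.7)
The plan is induction on $n$, with the base case $n=0$ trivial. Fix $\eps$ and $b_1,\ldots,b_n$, let $\sigma=\sigma(\eps)$, and split into two cases depending on whether $\sigma$ is a single block.

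\emph{Single-block case} ($\eps$ purely alternating). If $n$ is odd, both sides of \eqref{eq:EXeps} vanish: the left-hand side by Lemma~\ref{Odd}, and the right-hand side by the odd-alternating vanishing in Definition~\ref{def:rdiag} combined with $B$-bimodularity of $\Ec$. If $n$ is even, the left-hand side equals $\frac1N\sum_{k=1}^N$ of the diagonal entries of the expectation of the matrix product, and Proposition~\ref{prop:diagexpect} provides the uniform-in-$t$ limit of the $h_N(t)$-th diagonal entry. Comparing \eqref{eq:diagexpect1}--\eqref{eq:diagexpect2} directly with the defining relations \eqref{eq:EX*X}--\eqref{eq:EXX*} for $\Ec$ (they literally agree term-by-term with $b_j$ in place of $g_j$), the uniform limit is the function $t\mapsto\Ec(b_1 X^{\eps(1)}\cdots b_n X^{\eps(n)})(t) \in C[0,1]$, and uniform convergence turns the normalized trace (a Riemann sum) into the Lebesgue integral $\tau\circ\Ec(b_1 X^{\eps(1)}\cdots b_n X^{\eps(n)})$.

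\emph{Multi-block case.} For $I\in\sigma$, set $Z_{N,I}:=\prod_{k\in I} D_N(b_k)X_N^{\eps(k)}$ and $E_{N,I}:=\Eb\circ\diag Z_{N,I}$. Proposition~\ref{OneProp} gives
\[
\Eb\circ\tr\prod_{I\in\sigma}(Z_{N,I}-E_{N,I})=O(N^{-1/2}).
\]
Expanding this product by choosing from each factor either $Z_{N,I}$ (for $I\in T$) or $-E_{N,I}$ (for $I\notin T$), and isolating the $T=\sigma$ term, yields
\[
\Eb\circ\tr\prod_{I} Z_{N,I}=-\sum_{T\subsetneq\sigma}(-1)^{|\sigma\setminus T|}\Eb\circ\tr\prod_{I} W_{N,I}^{T}+O(N^{-1/2}),
\]
with $W_{N,I}^T$ equal to $Z_{N,I}$ or $E_{N,I}$ accordingly. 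For each $T\subsetneq\sigma$ and each $I\notin T$ with $|I|$ odd, Lemma~\ref{Odd} annihilates the term; for $|I|$ even, Proposition~\ref{prop:diagexpect} together with the matching already established shows that $E_{N,I}-D_N(f_I)\to 0$ in operator norm, where $f_I:=\Ec\left(\prod_{k\in I} b_k X^{\eps(k)}\right)\in B$. Substituting $D_N(f_I)$ for $E_{N,I}$ produces a moment with strictly fewer $X_N$-factors (consecutive $D_N(\cdot)$'s coalesce into a single $D_N$ of a product in $B$), and the substitution error is controlled by Schatten-H\"older (Lemma~\ref{P18}) combined with the uniform bound $|X_N|_{2p}\le C_p$ from Lemma~\ref{Bound}. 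The inductive hypothesis then applies and gives $\Eb\circ\tr\prod_I W_{N,I}^T\to\tau\circ\Ec\prod_I W_I^T$, where $W_I^T$ is $\prod_{k\in I} b_k X^{\eps(k)}$ or $\Ec$ thereof.

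The proof closes via the algebraic identity coming from $B$-valued R-diagonality of $X$: Definition~\ref{def:rdiag} gives $\Ec\left(\prod_{I}(Z_I-\Ec(Z_I))\right)=0$, so the same cumulant expansion yields
\[
\tau\circ\Ec\prod_I Z_I=-\sum_{T\subsetneq\sigma}(-1)^{|\sigma\setminus T|}\tau\circ\Ec\prod_I W_I^T,
\]
which matches the asymptotic expansion above and closes the induction. The principal technical subtlety is the substitution step: one must verify that the entry-wise uniform error in $E_{N,I}-D_N(f_I)$ continues to produce a negligible contribution to the trace after being multiplied on both sides by other random factors whose operator norms are a priori not uniformly bounded, and this is where the Schatten-norm bounds play their role.
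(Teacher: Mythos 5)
Your proposal is correct and follows essentially the same route as the paper's proof: induction on $n$, Proposition~\ref{OneProp} to pass from $Z_{N,I}$ to its expected diagonal, Proposition~\ref{prop:diagexpect} to replace the expected diagonal by $D_N(c_I)$ in operator norm, Schatten--H\"older (Lemmas~\ref{Offdiag} and \ref{P18}) to control the substitution error, and the $B$-valued R-diagonality expansion $\Ec\bigl(\prod_I(Z_I-\Ec Z_I)\bigr)=0$ to produce a matching algebraic expansion on the target side. The only organizational difference is that you split off the single-block case explicitly (where Proposition~\ref{OneProp} is vacuous and one reads the answer directly from Proposition~\ref{prop:diagexpect} via Riemann sums) and perform the $E_{N,I}\rightsquigarrow D_N(f_I)$ substitution term-by-term after expanding, whereas the paper interpolates once inside the product and expands afterward; these are equivalent bookkeeping choices and your flagging of the Schatten-norm subtlety in the substitution step is exactly the right point to be careful about.
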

\begin{proof}
Let $n\in\Nats$
and suppose $b_1,\ldots,b_n\in B$ and
$\eps(1),\ldots,\eps(n)\in\{1,*\}$ are arbitrary.
We will prove~\eqref{eq:EXeps} by induction on $n$.
In the case of $n=1$, the right-hand-side of~\eqref{eq:EXeps} is zero and, by Lemma~\ref{Odd}, so is
the left-hand-side of~\eqref{eq:EXeps}.
For the induction step, let $\sigma=\sigma(\eps)$ be the maximal alternating interval partition of $\eps$
(see Definition~\ref{def:maxaltptn}).
For $I\in\sigma$, let
\[
c_I=\Ec\left(\prod_{j\in I}b_jX^{\eps(j)}\right)\in B,
\]
where the product is taken in increasing order of the index $j$.
By $B$-valued R-diagonality of $X$,
\[
\tau\circ\Ec\left(\prod_{I\in\sigma}\left(\prod_{j\in I}b_jX^{\eps(j)}-c_I\right)\right)=0,
\]
where the product over $I\in\sigma$ is taken in order of increasing elements of the interval blocks $I$
(since $\sigma$ is an interval partition, given two distinct blocks,
all the elements of one of them are less than all the elements of the other).
Expanding the above product over $I\in\sigma$, we get a sum of $2^{|\sigma|}$ terms that enables
$\tau\circ\Ec(\prod_{j=1}^nb_jX^{\eps(j)})$ to be expressed as $(-1)^{|\sigma|-1}\tau(\prod_{I\in\sigma}c_I)$
plus the sum of $2^{|\sigma|}-2$ terms, each of the form
\begin{equation}\label{eq:EfX}
(-1)^{|\sigma\backslash\sigma'|-1}\tau\circ\Ec\left(\prod_{j\in K}f_jX^{\eps(j)}\right),
\end{equation}
where $K$ is the union of a proper subset $\sigma'$ of $\sigma$ and for certain $f_j\in B$,
equal to the product of $b_j$ and some of $(c_I)_{I\in\sigma\backslash\sigma'}$.

We will show
\begin{equation}\label{eq:withDcI}
\lim_{N\to\infty}\mathbb{E}\circ\mathrm{tr}\left(\prod_{I\in\sigma}\left(\prod_{j\in I}D_N(b_{j})X_{N}^{\epsilon(j)}
-D_N(c_I)\right)\right)=0.
\end{equation}
This will prove the induction step, because expansion of the left-hand-side of~\eqref{eq:withDcI} as a sum of $2^{|\sigma|}$ terms
will enable
\[
\lim_{N\to\infty}\mathbb{E}\circ\mathrm{tr}\left(\prod_{j=1}^nD_N(b_j)X_N^{\eps(j)}\right)
\]
to be written as
\[
(-1)^{|\sigma|-1}\lim_{N\to\infty}\Eb\circ\tr\left(\prod_{I\in\sigma}D_N(c_I)\right)=(-1)^{|\sigma|-1}\tau\left(\prod_{I\in\sigma}c_I\right)
\]
plus the sum of $2^{|\sigma|-1}$ terms,
each equal to
\begin{equation}\label{eq:limEfXN}
(-1)^{|\sigma\backslash\sigma'|-1}\lim_{N\to\infty}\Eb\circ\tr\left(\prod_{j\in K}D_N(f_j)X_N^{\eps(j)}\right),
\end{equation}
for the same $K$ and $f_j$ as appeared in~\eqref{eq:EfX}.
By the inductive hypothesis, each of the terms in~\eqref{eq:limEfXN} is equal to the corresponding term in~\eqref{eq:EfX}.
This shows that proof of the induction step will follow, once we have proved~\eqref{eq:withDcI}.

In order to verify~\eqref{eq:withDcI}, we will use
Proposition~\ref{OneProp}, which yields
\begin{equation}\label{eq:3.1result}
\lim_{N\to\infty}\mathbb{E}\circ\mathrm{tr}\left(\prod_{I\in\sigma}\left(\prod_{j\in I}D_N(b_{j})X_{N}^{\epsilon(j)}
-\mathbb{E}\circ\mathrm{diag}\left(\prod_{j\in I}D_N(b_{j})X_{N}^{\epsilon(j)}\right)\right)\right)=0.
\end{equation}
For $I\in\sigma$, if $n$ is even, then
from Proposition~\ref{prop:diagexpect} and \eqref{eq:EX*X}--\eqref{eq:EXX*}, we have
\begin{equation}\label{eq:alternatingdiag}
\lim_{N\to\infty}\left\|\Eb\circ\diag\left(\prod_{j\in I}D_N(b_{j})X_{N}^{\epsilon(j)}\right)-D_N(c_I)\right\|=0.
\end{equation}
while if $n$ is odd, then by R-diagonality of $X$ we have $c_I=0$ and from Lemma~\ref{Odd}, we see that also
in this case~\eqref{eq:alternatingdiag} holds.
We now write, for each $I\in\sigma$,
\begin{multline*}
\prod_{j\in I}D_N(b_{j})X_{N}^{\epsilon(j)}-D_N(c_I)
=\left(\prod_{j\in I}D_N(b_{j})X_{N}^{\epsilon(j)}
-\mathbb{E}\circ\mathrm{diag}\left(\prod_{j\in I}D_N(b_{j})X_{N}^{\epsilon(j)}\right)\right) \\
+\left(\mathbb{E}\circ\mathrm{diag}\left(\prod_{j\in I}D_N(b_{j})X_{N}^{\epsilon(j)}\right)-D_N(c_I)\right)
\end{multline*}
and, in the left-hand-side of~\eqref{eq:withDcI}, distribute, resulting in a sum of $2^{|\sigma|}$ limits, each of which will
be seen to equal $0$.
That the first of these limits is zero is precisely the import of~\eqref{eq:3.1result}.
That each of the other limits is zero is a consquence of~\eqref{eq:alternatingdiag} and H\"older's inequality, (see, Lemma~\ref{P18}).
Indeed, each of the other limits is of the form
\begin{equation}\label{eq:prodFI}
\lim_{N\to\infty}\Eb\circ\tr\left(\prod_{I\in\sigma}F_I\right),
\end{equation}
where $F_I$ is either
\[
\left(\prod_{j\in I}D_N(b_{j})X_{N}^{\epsilon(j)}
-\mathbb{E}\circ\mathrm{diag}\left(\prod_{j\in I}D_N(b_{j})X_{N}^{\epsilon(j)}\right)\right)
\]
or
\[
\left(\mathbb{E}\circ\mathrm{diag}\left(\prod_{j\in I}D_N(b_{j})X_{N}^{\epsilon(j)}\right)-D_N(c_I)\right)
\]
and for at least one $I\in\sigma$ it is the latter.
Now from~\eqref{eq:alternatingdiag}, we conclude that, for every $I\in\sigma$,
\[
\left\|\mathbb{E}\circ\mathrm{diag}\left(\prod_{j\in I}D_N(b_{j})X_{N}^{\epsilon(j)}\right)\right\|
\]
remains bounded as $N\to\infty$.
From Lemma~\ref{Offdiag}, we have that, for every $I\in\sigma$ and every integer $p\ge1$,
\[
\left|\prod_{j\in I}D_N(b_{j})X_{N}^{\epsilon(j)}\right|_{2p}
\]
remains bounded as $N\to\infty$.
Consequently, for every $I\in\sigma$,
\[
\left|\prod_{j\in I}D_N(b_{j})X_{N}^{\epsilon(j)}-\mathbb{E}\circ\mathrm{diag}\left(\prod_{j\in I}D_N(b_{j})X_{N}^{\epsilon(j)}\right)\right|_{2p}
\]
remains bounded as $N\to\infty$.
Of course, from~\eqref{eq:alternatingdiag} we get, for every $I$ and $p$,
\[
\lim_{N\to\infty}\left|\mathbb{E}\circ\mathrm{diag}\left(\prod_{j\in I}D_N(b_{j})X_{N}^{\epsilon(j)}\right)-D_N(c_I)\right|_{2p}=0.
\]
Consequently, taking $p=|\sigma|$ and applying H\"older's inequality, we get that for every product $\prod_{I\in\sigma}F_I$
of the form described at~\eqref{eq:prodFI},
\[
\lim_{N\to\infty}\left|\prod_{I\in\sigma}F_I\right|_2=0.
\]
Now using the Cauchy--Schwarz inequality, we conclude
\[
\lim_{N\to\infty}\Eb\circ\tr\left(\prod_{I\in\sigma}F_I\right)=0.
\]
This finishes the proof of~\eqref{eq:withDcI}, and of the theorem.
\end{proof}

\section{Calculating $\Lambda_\pi$ and certain moments and cumulants}
\label{sec:calcs}

Here are some results that will allow us to calculate $\Lambda_\pi$ for many partitions $\pi$.
The first is an easy calculation:
\begin{lemma}\label{lem:1n}
Suppose $n\ge2$ and $\pi=1_n$ is the partition of $\{1,\ldots,n\}$ into one block.
Then
\[
\Lambda_\pi(g_1,\ldots,g_{n-1})=\prod_{p=1}^{n-1}\tau(g_p)
\]
is constant.
\end{lemma}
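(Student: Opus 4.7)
The plan is to use Lemma~\ref{lem:LambdapiAlt} and reduce the integral defining $\Lambda_\pi$ to a product of one-dimensional integrals via a straightforward change of variables.

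First I would unwind the definitions for the single-block partition $\pi = 1_n$. Its only block is $S = \{1,\ldots,n\}$, so $S' = \{1,\ldots,n-1\}$ and $J_\pi = \{1,\ldots,n-1\}$. For every $p \in \{1,\ldots,n-1\}$, the block $S_\pi(j) = S$ is never $\le p$, so
\[
I_\pi(p) = \{1,2,\ldots,p\}.
\]
Moreover, the subspace $K_\pi$ of~\eqref{eq:Kpi} is defined by the single equation $\sum_{p\in S}(s_p - s_{p-1}) = 0$, which (using $s_0 = s_n$) reduces to $s_n - s_n = 0$ and is vacuous; hence $K_\pi = \Reals^n$.

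Next I would apply Lemma~\ref{lem:LambdapiAlt}. The set $F(\pi,t)$ becomes
\[
F(\pi,t) = \{(s_p)_{p=1}^n \in (0,1]^n : s_n = t\} = (0,1]^{n-1} \times \{t\},
\]
and the map $\Phi_\pi^{(t)}\colon (t_j)_{j=1}^{n-1} \mapsto \bigl(t + \sum_{j=1}^p t_j\bigr)_{p=1}^{n-1}$ (dropping the constant last coordinate $s_n = t$) is a volume-preserving linear bijection of $\Reals^{n-1}$ onto the hyperplane $\{s_n = t\}$ in $\Reals^n$. Consequently the push-forward measure $\nu_\pi^{(t)}$ is ordinary Lebesgue measure on $(0,1]^{n-1} \times \{t\}$.

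Substituting into~\eqref{eq:intF}, the integral factors as
\[
\Lambda_\pi(g_1,\ldots,g_{n-1})(t) = \int_{(0,1]^{n-1}} g_1(s_1) \cdots g_{n-1}(s_{n-1})\, ds_1 \cdots ds_{n-1} = \prod_{p=1}^{n-1} \tau(g_p),
\]
which is independent of $t$. There is no genuine obstacle here; the only point requiring a moment's thought is the verification that $K_\pi = \Reals^n$ for the one-block partition, so that the region $F(\pi,t)$ really is a full Cartesian product with no coupling between the $s_p$'s.
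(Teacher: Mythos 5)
Your proof is correct and is essentially the same computation as the paper's: both rest on the observation that the triangular, unit-Jacobian change of variables $s_p = t + \sum_{j\le p} t_j$ carries $E(\pi,t)$ onto the unit cube $(0,1]^{n-1}$, after which the integral factors. The only cosmetic difference is that you route through the reformulation in Lemma~\ref{lem:LambdapiAlt} (identifying $K_\pi=\Reals^n$ and $\nu_\pi^{(t)}$ with Lebesgue measure on the hyperplane $\{s_n=t\}$), whereas the paper performs the same change of variables directly from the definition~\eqref{eq:Lambdapi}.
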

\begin{proof}
We have $I_\pi(p)=\{1,2,\ldots,p\}$ for all $p\in\{1,\ldots,n-1\}$ and, thus,
\[
E(\pi,t)=\{(t_1,\ldots,t_{n-1})\in\Reals^{n-1}\mid \forall p\in\{1,\ldots,n-1\},\,0\le t+\sum_{j=1}^p t_j\le1\}.
\]
The change of variables
\[
s_p=t+t_p+\sum_{j=1}^{p-1}t_j,\qquad(1\le p\le n-1)
\]
preserves Lebesgue measure and sends $E(\pi,t)$ onto $[0,1]^{n-1}$ and we get
\begin{multline*}
\Lambda_{1_n}(g_1,\ldots,g_{n-1})(t)
=\int_{E(\pi,t)}\prod_{p=1}^{n-1}g_p(t+\sum_{j=1}^pt_j)\,d\lambda((t_j)_{j=1}^{n-1}) \\
=\int_{[0,1]^{n-1}}\bigg(\prod_{n=1}^{n-1}g_p(s_p)\bigg)\,d\lambda((s_j)_{j=1}^{n-1})
=\prod_{p=1}^{n-1}\bigg(\int_0^1g_p(s)\,ds\bigg)
=\prod_{p=1}^{n-1}\tau(g_p).
\end{multline*}
\end{proof}

The next lemma concerns partitions obtained by rotations of the underlying set.
Let $\ell=\ell_n:\{1,\ldots,n\}\to\{1,\ldots,n\}$ denote the left rotation map: $\ell(j)=j-1\mod n$.
For $\pi\in\Pc(n)$, let $\ell(\pi)=\{\ell(S)\mid S\in\pi\}$ denote the partition obtained by rotating the underlying set according to $\ell$.

\begin{lemma}\label{lem:rotate}
For every $n\in\Nats$ and $\pi\in\Pc(n)$,
\begin{equation}\label{eq:tauLamL}
\tau(\Lambda_\pi(g_1,\ldots,g_{n-1})g_n)=\tau(\Lambda_{\ell(\pi)}(g_2,\ldots,g_n)g_1).
\end{equation}
\end{lemma}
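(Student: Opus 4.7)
The plan is to use the alternative description of $\Lambda_\pi$ from Lemma~\ref{lem:LambdapiAlt} to rewrite both sides of the claimed identity as integrals over $K_\pi$ and $K_{\ell(\pi)}$ respectively, and then to exploit the cyclic rotation that carries one onto the other. The relevant rotation is $R : \Reals^n \to \Reals^n$, $R(s_1,\ldots,s_n) = (s_2, \ldots, s_n, s_1)$; this coordinate-permuting isometry sends $K_\pi$ bijectively onto $K_{\ell(\pi)}$ because the defining equations $\sum_{p\in S}(s_p - s_{p-1})=0$ rotate accordingly (with the convention $s_0 = s_n$). The case $\pi = 0_n$, for which Lemma~\ref{lem:LambdapiAlt} does not apply but $\ell(\pi)=\pi$ and $\Lambda_{0_n}$ is given directly by the definition, can be disposed of at the outset using only commutativity of $C[0,1]$.

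Assuming $\pi \ne 0_n$, I would first invoke Lemma~\ref{lem:LambdapiAlt} together with Fubini, and use that $s_n = t$ for every $(s_p) \in F(\pi, t)$, to write
\[
\tau\bigl(\Lambda_\pi(g_1,\ldots,g_{n-1}) g_n\bigr) = \int_0^1 \int_{E(\pi, t)} \prod_{p=1}^n g_p\bigg(t + \sum_{j \in I_\pi(p)} t_j\bigg)\, d\lambda\bigl((t_j)_{j \in J_\pi}\bigr)\, dt,
\]
with the convention $I_\pi(n) = \emptyset$. The integrand is $(g_1 \otimes \cdots \otimes g_n) \circ \Phi_\pi$, and the integration region is (up to a Lebesgue-null set) $\Phi_\pi^{-1}(K_\pi \cap (0,1]^n)$.

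The heart of the argument is a change of variables by the composed linear map $\Psi := \Phi_{\ell(\pi)}^{-1} \circ R \circ \Phi_\pi$ from $\Reals^{J_\pi} \times \Reals$ to $\Reals^{J_{\ell(\pi)}} \times \Reals$. Because $R$ preserves $(0,1]^n$, $\Psi$ identifies the two parameter domains $\Phi_\pi^{-1}(K_\pi \cap (0,1]^n)$ and $\Phi_{\ell(\pi)}^{-1}(K_{\ell(\pi)} \cap (0,1]^n)$. The main obstacle I anticipate is verifying that $\Psi$ preserves Lebesgue measure; for this I would invoke~\eqref{eq:KZ}. Since $R$ sends $\Ints^n$ to itself and $K_\pi$ to $K_{\ell(\pi)}$, it sends $K_\pi \cap \Ints^n$ onto $K_{\ell(\pi)} \cap \Ints^n$. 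Combined with~\eqref{eq:KZ} applied to both $\pi$ and $\ell(\pi)$, this implies that $\Psi$ restricts to a $\Ints$-module isomorphism $\Ints^{J_\pi} \times \Ints \to \Ints^{J_{\ell(\pi)}} \times \Ints$, both of rank $n - |\pi| + 1$ since $|\ell(\pi)|=|\pi|$. Any such isomorphism is represented by an integer matrix with integer inverse, hence has determinant $\pm 1$, and therefore preserves Lebesgue measure.

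Having made the change of variables, it remains only to identify the transformed integrand. Writing $(s_p) = \Phi_\pi((t_j), t)$ and $(s'_p) = R(s) = \Phi_{\ell(\pi)}((u_k), u)$, one has $s_p = s'_{p-1}$ for $p \ge 2$ and $s_1 = s'_n = u$, so
\[
\prod_{p=1}^n g_p(s_p) = g_1(u) \prod_{q=1}^{n-1} g_{q+1}(s'_q) = g_1(u) \prod_{q=1}^{n-1} g_{q+1}\bigg(u + \sum_{k \in I_{\ell(\pi)}(q)} u_k\bigg).
\]
Substituting into the integral and recognising the inner integral as $\Lambda_{\ell(\pi)}(g_2,\ldots,g_n)(u)$ by its very definition yields $\int_0^1 g_1(u)\, \Lambda_{\ell(\pi)}(g_2,\ldots,g_n)(u)\, du = \tau\bigl(\Lambda_{\ell(\pi)}(g_2,\ldots,g_n) g_1\bigr)$, which is the right-hand side of~\eqref{eq:tauLamL}.
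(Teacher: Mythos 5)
Your proposal is correct and follows the same overall architecture as the paper's proof: both introduce the cyclic coordinate permutation on $\Reals^n$ (you call it $R$, the paper calls it $C$), form the composed map $\Phi_{\ell(\pi)}^{-1}\circ R\circ\Phi_\pi$, observe that it carries $E(\pi)$ onto $E(\ell(\pi))$ and matches the integrand factor by factor via~\eqref{eq:tusums}, and reduce the identity to showing this map preserves Lebesgue measure. Where you genuinely diverge from the paper is in that last step. The paper establishes measure-preservation by direct computation: it splits into the two cases $\{1\}\in\pi$ and $\{1\}\notin\pi$, and in the harder case writes out the matrix of $\Theta$ in an explicit ordered basis, finding it lower triangular with diagonal entries $(1,\ldots,1,-1,1)$. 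You instead invoke the lattice identity~\eqref{eq:KZ} from Lemma~\ref{lem:LambdapiAlt}, which the paper proves but never reuses in this particular proof: since $\Phi_\pi$ and $\Phi_{\ell(\pi)}$ carry $\Ints^{J_\pi}\times\Ints$ and $\Ints^{J_{\ell(\pi)}}\times\Ints$ onto $K_\pi\cap\Ints^n$ and $K_{\ell(\pi)}\cap\Ints^n$, and the permutation $R$ preserves $\Ints^n$ while sending $K_\pi$ to $K_{\ell(\pi)}$, the composite restricts to an isomorphism of integer lattices of equal rank $n-|\pi|+1$, so both it and its inverse have integer matrices and the determinant is $\pm1$. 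This unimodularity argument is shorter, sidesteps the paper's case analysis entirely, and makes good use of machinery already in place from Lemma~\ref{lem:LambdapiAlt}; the paper's explicit computation has only the minor side benefit of exhibiting the change of variables concretely. Your separate disposal of the degenerate case $\pi=0_n$ (outside the hypotheses of Lemma~\ref{lem:LambdapiAlt}, and where $\ell(0_n)=0_n$ makes the identity trivial by commutativity of $C[0,1]$) is also a careful touch the paper leaves implicit.
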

\begin{proof}
Let
\[
E(\pi)=\bigcup_{t\in[0,1]}E(\pi,t)\times\{t\},\qquad
E(\ell(\pi))=\bigcup_{u\in[0,1]}E(\ell(\pi),u)\times\{u\}.
\]
Recalling that $I_\pi(n)=\emptyset$, we have
\begin{align*}
\tau(\Lambda_\pi(g_1,\ldots,g_{n-1})g_n)&=\int_{E(\pi)}\bigg(\prod_{p=1}^ng_p\bigg(t+\sum_{j\in I_\pi(p)}t_j\bigg)\bigg)
\,d\lambda\big((t_j)_{j\in J_\pi},t\big), \\
\tau(g_1\Lambda_{\ell(\pi)}(g_2,\ldots,g_n))&=\int_{E(\ell(\pi))}\bigg(\prod_{p=1}^ng_{\ell^{-1}(p)}\bigg(u+\sum_{j\in I_{\ell(\pi)}(p)}u_j\bigg)\bigg)
\,d\lambda\big((u_j)_{j\in J_{\ell(\pi)}},u\big) \\
&=\int_{E(\ell(\pi))}\bigg(\prod_{p=1}^ng_p\bigg(u+\sum_{j\in I_{\ell(\pi)}(\ell(p))}u_j\bigg)\bigg)
\,d\lambda\big((u_j)_{j\in J_{\ell(\pi)}},u\big).
\end{align*}
We will show that there is linear isomorphism
$\Theta:\Reals^{J_\pi}\times\Reals\to\Reals^{J_{\ell(\pi)}}\times\Reals$
that preserves Lebesgue measure and satisfies
$\Theta(E(\pi))=E(\ell(\pi))$ and that if
\begin{equation}\label{eq:Ttu}
\Theta((t_j)_{j\in J_\pi},t)=((u_j)_{j\in J_{\ell(\pi)}},u),
\end{equation}
then
\begin{equation}\label{eq:tusums}
t+\sum_{j\in I_\pi(p)}t_j=u+\sum_{j\in I_{\ell(\pi)}(\ell(p))}u_j\qquad(1\le p\le n).
\end{equation}
This will yield the desired identity~\eqref{eq:tauLamL}, after performing a change of variables of integration.

From Lemma~\ref{lem:LambdapiAlt}, we have the isomorphisms
\begin{align*}
\Phi_\pi&:\Reals^{J_\pi}\times\Reals\to K_\pi\subseteq\Reals^n \\
\Phi_{\ell(\pi)}&:\Reals^{J_{\ell(\pi)}}\times\Reals\to K_{\ell(\pi)}\subseteq\Reals^n.
\end{align*}
The cyclic permutation map $C:\Reals^n\to\Reals^n$ given by $C(s_1,\ldots,s_n)=(s_2,s_3,\ldots,s_n,s_1)$ is an isomorphism that sends $K_\pi$
onto $K_{\ell(\pi)}$.
We let 
\[
\Theta=\Phi_{\ell(\pi)}^{-1}\circ C\circ\Phi_\pi:\Reals^{J_\pi}\times\Reals\to\Reals^{J_{\ell(\pi)}}\times\Reals
\]
and from the definitions of $\Phi_\pi$ and $\Phi_{\ell(\pi)}$, we immediately see that~\eqref{eq:Ttu}
implies~\eqref{eq:tusums}.
From this, we deduce $\Theta(E(\pi))=E(\ell(\pi))$.
It remains only to see that $\Theta$ preserves Lebesque measure.

\smallskip\noindent
{\em Case 1: $\{1\}\in\pi$.}
Then $1\notin J_\pi$ and $J_{\ell(\pi)}=\{j-1\mid j\in J_\pi\}$ and
\[
I_{\ell(\pi)}(p-1)=\{j-1\mid j\in I_\pi(p)\},\quad(2\le p\le n).
\]
Thus, from~\eqref{eq:tusums}, we have
\[
t+\sum_{j\in I_\pi(p)}t_j=u+\sum_{j\in I_\pi(p)}u_{j-1},\quad(2\le p\le n)
\]
and, since $I_\pi(1)=\emptyset=I_\pi(n)$, taking $p=1$ in~\eqref{eq:tusums}, we also get $t=u$.
Thus, we get $t_j=u_{j-1}$ for all $j\in J_\pi$ and we see that the mapping $\Theta$ amounts to a relabelling of the variables,
which preserves Lebesgue measure.

\smallskip\noindent
{\em Case 2: $\{1\}\notin\pi$.}
Then $1\in J_\pi$.
Recall that $S_\pi(1)$ denotes the block of $\pi$ that contains $1$.
Let $m=\max S_\pi(1)$.
Then $m\notin J_\pi$ and we have
\[
J_{\ell(\pi)}=\{j-1\mid j\in J_\pi\setminus\{1\}\}\cup\{m-1\}
\]
and
\[
I_{\ell(\pi)}(p-1)=\begin{cases}
\{j-1\mid j\in I_\pi(p)\setminus\{1\}\},&2\le p<m \\
\{j-1\mid j\in(I_\pi(p)\cup S_\pi(1))\setminus\{1\}\},&m\le p\le n.
\end{cases}
\]
Thus, noting that $I_\pi(p)\cap S_\pi(1)=\emptyset$ whenever $m\le p\le n$, from~\eqref{eq:tusums}, we have
\begin{equation}\label{eq:tusumsdetail}
t+\sum_{j\in I_\pi(p)}t_j=\begin{cases}
u+\sum_{j\in I_\pi(p)\setminus\{1\}}u_{j-1},&2\le p<m \\
u+\sum_{j\in I_\pi(p)}u_{j-1}+\sum_{j\in S_\pi(1)\setminus\{1\}}u_{j-1},&m\le p\le n. \\
\end{cases}
\end{equation}
Take $p\in J_\pi\setminus\{1\}$.
Then we have $I_\pi(p)=I_\pi(p-1)\cup\{p\}$ and, consequently,
we find (keeping in mind $m\notin J_\pi$)
\[
t_p=\bigg(t+\sum_{j\in I_\pi(p)}t_j\bigg)-\bigg(t+\sum_{j\in I_\pi(p-1)}t_j\bigg)=u_{p-1},\quad(p\in J_\pi\setminus\{1\}).
\]
On the other hand, taking $p=n$, since $I_\pi(n)=\emptyset$, from~\eqref{eq:tusumsdetail}, we get
\[
t=u+\sum_{j\in S_\pi(1)\setminus\{1\}}u_{j-1}.
\]
Since $I_\pi(1)=\{1\}$ and $I_{\ell(\pi)}(n)=\emptyset$, from~\eqref{eq:tusums}, we get $t+t_1=u$.
Thus, we have
\[
t_1=(t_1+t)-t=-\sum_{j\in S_\pi(1)\setminus\{1\}}u_{j-1}.
\]
Thus, writing $J_\pi=\{j(1),j(2),\ldots,j(n-|\pi|)\}$ with $1=j(1)<j(2)<\cdots<j(n-|\pi|)$, we have
\[
\left(\begin{matrix}
t_{j(2)} \\
t_{j(3)} \\
\vdots \\
t_{j(n-|\pi|)} \\
t_1 \\
t
\end{matrix}\right)
=A
\left(\begin{matrix}
u_{j(2)-1} \\
u_{j(3)-1} \\
\vdots \\
u_{j(n-|\pi|)-1} \\
u_{m-1} \\
u
\end{matrix}\right)
\]
where $A$ is a lower triangular matrix whose diagonal entries form the list $(1,1,\ldots,1,-1,1)$.
Thus, the change of variables implimented by $\Theta$ preserves Lebesgue measure, as required.
\end{proof}

The next lemma handles the case when $\pi$ splits along two adjacent intervals.
Given integers $1\le x<n$ and given $\pi_1\in\Pc(x)$, $\pi_2\in\Pc(n-x)$, let us write
\[
\pi=\pi_1\oplus\pi_2
\]
for the partition $\pi\in\Pc(n)$ given by
$\pi=\pi_1\cup\pit_2$, where $\pit_2$ is obtained by translating $\pi_2$ distance $x$ to the right, namely,
\[
\pit_2=\big\{\{x+j\mid j\in S\}\mid S\in\pi_2\big\}.
\]
\begin{lemma}\label{lem:twointervals}
Given integers $1\le x <n$ and letting $\pi=\pi_1\oplus\pi_2\in\Pc(n)$ for some $\pi_1\in\Pc(x)$ and $\pi_2\in\Pc(n-x)$, we have
\[
\Lambda_\pi(g_1,\ldots,g_{n-1})=\Lambda_{\pi_1}(g_1,\ldots,g_{x-1})g_x\Lambda_{\pi_2}(g_{x+1},\ldots,g_{n-1}).
\]
\end{lemma}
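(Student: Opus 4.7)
The proof is essentially a direct unpacking of the definition~\eqref{eq:Lambdapi} and an application of Fubini's theorem, once a combinatorial splitting is established. The key observation is that since $\pi = \pi_1 \oplus \pi_2$, no block of $\pi$ crosses the boundary position $x$, so every combinatorial object defining $\Lambda_\pi$ decomposes along that boundary. Concretely, I would first verify:
\begin{enumerate}[label=(\roman*),leftmargin=20pt]
\item $J_\pi = J_{\pi_1} \sqcup \{x+j' : j' \in J_{\pi_2}\}$;
\item $I_\pi(p) = I_{\pi_1}(p)$ for $1 \le p < x$;
\item $I_\pi(x) = \emptyset$, because every block $S_\pi(j)$ with $j \le x$ is a block of $\pi_1$ and hence $S_\pi(j) \le x$;
\item $I_\pi(p) = \{x+j' : j' \in I_{\pi_2}(p-x)\}$ for $x < p \le n-1$, since blocks meeting $\{x+1,\ldots,n\}$ are translates of blocks of $\pi_2$ and contribute nothing from indices $\le x$.
\end{enumerate}
All four items are immediate from the definition of $\pi_1 \oplus \pi_2$.

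Fix $t \in [0,1]$. Using (i), I would split the integration variables as $(t_j)_{j \in J_{\pi_1}}$ together with $(t'_{j'})_{j' \in J_{\pi_2}}$, where $t'_{j'} = t_{x+j'}$. By (ii), (iii) and (iv), the product integrand in~\eqref{eq:Lambdapi} factors as
\[
\left(\prod_{p=1}^{x-1} g_p\bigg(t + \sum_{j \in I_{\pi_1}(p)} t_j\bigg)\right) \cdot g_x(t) \cdot \left(\prod_{p=1}^{n-x-1} g_{x+p}\bigg(t + \sum_{j' \in I_{\pi_2}(p)} t'_{j'}\bigg)\right).
\]
In the same way, the constraints defining $E(\pi,t)$ separate: for $p < x$ they are precisely the constraints defining $E(\pi_1,t)$ on the first set of variables; for $p = x$ the constraint reduces to $0 < t \le 1$, which is automatic; and for $p > x$ they are precisely the constraints defining $E(\pi_2,t)$ on the second set of variables. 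Hence $E(\pi,t) = E(\pi_1,t) \times E(\pi_2,t)$ under the identification above.

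Applying Fubini's theorem to this product of integrals yields the stated identity pointwise in $t$. The degenerate cases $\pi_1 = 0_x$ or $\pi_2 = 0_{n-x}$ (where $J_{\pi_1}$ or $J_{\pi_2}$ is empty) are handled separately by the convention $\Lambda_{0_m}(g_1,\ldots,g_{m-1}) = \prod_{p=1}^{m-1} g_p$ stated in the definition, and a direct check confirms the same factorization. There is no real obstacle to the argument; the only delicate point is the careful case analysis of $I_\pi(p)$ at $p = x$, where the absence of boundary-crossing blocks is what makes $g_x$ appear as a free factor evaluated at $t$.
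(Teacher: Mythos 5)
Your proof is correct and follows essentially the same route as the paper's: both establish the decomposition of $J_\pi$ and $I_\pi(p)$ across the boundary at $x$ (in particular $I_\pi(x)=\emptyset$), conclude $E(\pi,t)=E(\pi_1,t)\times E(\pi_2,t)$, and apply Fubini. Your explicit remark on the degenerate cases $\pi_1=0_x$ or $\pi_2=0_{n-x}$ is a small extra courtesy; the paper leaves that check implicit.
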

\begin{proof}
We have
\[
J_\pi=J_{\pi_1}\cup\{j+x\mid j\in J_{\pi_2}\}
\]
and
\[
I_\pi(p)=\begin{cases}
I_{\pi_1}(p),&1\le p\le x \\
\{j+x\mid j\in I_{\pi_2}(p-x)\},&x<p\le n.
\end{cases}
\]
In particular $I_\pi(x)=\emptyset$.
Thus, for every $t\in[0,1]$,
\begin{align*}
E(\pi,t)&=\bigg\{\big((t_j)_{j\in J_{\pi_1}},(t_{j+x})_{j\in J_{\pi_2}}\big)\;\bigg|\;
\begin{aligned}[t]
&\forall 1\le p<x,\quad0<t+\sum_{j\in I_{\pi_1}(p)}t_j\le1, \\
&\forall x+1\le p<n,\quad0<t+\sum_{j\in I_{\pi_2}(p-x)}t_{x+j}\le1\bigg\}
\end{aligned} \\
&=E(\pi_1,t)\times E(\pi_2,t)
\end{align*}
and
\begin{align*}
\Lambda_\pi(g_1,\ldots,g_{n-1})
&=\int_{E(\pi,t)}\bigg(\prod_{p=1}^{x-1}g_p\bigg(t+\sum_{j\in I_{\pi_1}(p)}t_j\bigg)\bigg)\,g_x(t) \\
&\qquad\qquad\cdot\bigg(\prod_{p=1}^{n-x-1}g_{x+p}\bigg(t+\sum_{j\in I_{\pi_2}(p)}t_{x+j}\bigg)\bigg)\,d\lambda((t_j)_{j\in J_{\pi_1}})\,d\lambda((t_{x+j})_{j\in J_{\pi_2}}) \\
&=\Lambda_{\pi_1}(g_1,\ldots,g_{x-1})(t)\,g_x(t)\,\Lambda_{\pi_2}(g_{x+1},\ldots,g_{n-1})(t).
\end{align*}
\end{proof}

\begin{lemma}\label{lem:internalinterval}
Suppose $\pi\in\Pc(n)$ and $\pi=\pit_1\cup\pit_2$, where $\pit_1$ is a partition of
$S_1=\{1,\ldots,x\}\cup\{x+y+1,\ldots,n\}$
and $\pit_2$ is a partition of $S_2=\{x+1,\ldots,x+y\}$,
for some integers $1\le x<x+y\le n-1$.
Let $\pi_1\in\Pc(n-y)$ and $\pi_2\in\Pc(y)$ be the partitions obtained from $\pit_1$
and $\pit_2$ by applying the order-preserving bijections from $S_1$ onto $\{1,\ldots,n-y\}$ and from $S_2$ onto
and $\{1,\ldots,y\}$, respectively.
Then
\[
\Lambda_\pi(g_1,\ldots,g_{n-1})
=\Lambda_{\pi_1}(g_1,\ldots,g_{x-1},g_x\Lambda_{\pi_2}(g_{x+1},\ldots,g_{x+y-1})g_{x+y},g_{x+y+1},\ldots,g_{n-1}).
\]
\end{lemma}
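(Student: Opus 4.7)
The plan is to apply Fubini's theorem to the defining integral~\eqref{eq:Lambdapi}, splitting the variables $(t_j)_{j\in J_\pi}$ into the group indexed by $J_{\pit_1}$ (the ``outer'' partition) and the group indexed by $J_{\pit_2}$ (the ``inner'' partition). Since each block of $\pi$ lies entirely in $S_1$ or entirely in $S_2$, the disjoint union $J_\pi=J_{\pit_1}\sqcup J_{\pit_2}$ corresponds, under the order-preserving bijections $\phi_1:S_1\to\{1,\ldots,n-y\}$ and $\phi_2:S_2\to\{1,\ldots,y\}$, to $J_{\pi_1}$ and $J_{\pi_2}$ respectively, and Lebesgue measure on $\Reals^{J_\pi}$ is the product of Lebesgue measures on $\Reals^{J_{\pit_1}}$ and $\Reals^{J_{\pit_2}}$.

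The essential step is a case analysis of $I_\pi(p)$. For $p\in\{1,\ldots,x\}\cup\{x+y,\ldots,n\}$, every block of $\pi$ meeting $\{1,\ldots,p\}$ lies in $\pit_1$, hence $I_\pi(p)\subseteq J_{\pit_1}$ and corresponds under $\phi_1$ exactly to $I_{\pi_1}(\phi_1(p))$. For $p\in\{x+1,\ldots,x+y-1\}$, one splits $I_\pi(p)=(I_\pi(p)\cap J_{\pit_1})\cup(I_\pi(p)\cap J_{\pit_2})$; the first piece equals $\{j\in\{1,\ldots,x\}:S_{\pit_1}(j)\text{ meets }\{x+y+1,\ldots,n\}\}$, which is independent of $p$ in this range and coincides with both $I_\pi(x)$ and $I_\pi(x+y)$, while the second piece corresponds under $\phi_2$ to $I_{\pi_2}(p-x)$. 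In particular, with $s_p:=t+\sum_{j\in I_\pi(p)}t_j$ as in the definition of $E(\pi,t)$, the identity $I_\pi(x)=I_\pi(x+y)$ forces $s_x=s_{x+y}$; denote this common value by $u$. Then for $p\in\{x+1,\ldots,x+y-1\}$ we have $s_p=u+\sum_{q\in I_{\pi_2}(p-x)}t''_q$, where $t''_q:=t_{x+q}$.

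With this setup Fubini is direct. Write $(t'_q)_{q\in J_{\pi_1}}:=(t_{\phi_1^{-1}(q)})_{q\in J_{\pi_1}}$, and integrate first over $(t''_q)_{q\in J_{\pi_2}}$ with $t$ and $(t'_q)$ fixed. The inner region, determined by $0<s_p\le 1$ for $p\in\{x+1,\ldots,x+y-1\}$, is by definition $E(\pi_2,u)$, and the restricted integrand is $\prod_{q=1}^{y-1}g_{x+q}\bigl(u+\sum_{q'\in I_{\pi_2}(q)}t''_{q'}\bigr)$, so the inner integral equals $\Lambda_{\pi_2}(g_{x+1},\ldots,g_{x+y-1})(u)$. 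The outer region for $(t'_q)$, determined by $0<s_p\le 1$ for $p$ in the complementary range, matches $E(\pi_1,t)$ under $\phi_1$. Because the position $q=x$ in $\pi_1$ carries the value $s'_x=u=s_x=s_{x+y}$, the three factors $g_x(u)$, $\Lambda_{\pi_2}(g_{x+1},\ldots,g_{x+y-1})(u)$, and $g_{x+y}(u)$ collapse into a single evaluation at the $q=x$ slot of $\Lambda_{\pi_1}$, producing the asserted identity with arguments $(g_1,\ldots,g_{x-1},\,g_x\Lambda_{\pi_2}(g_{x+1},\ldots,g_{x+y-1})g_{x+y},\,g_{x+y+1},\ldots,g_{n-1})$.

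The main obstacle I anticipate is the bookkeeping for $I_\pi(p)$ in the internal range $x+1\le p\le x+y-1$, particularly the constancy $I_\pi(x)=I_\pi(p)\cap J_{\pit_1}=I_\pi(x+y)$ that yields $s_x=s_{x+y}$, together with matching the three integration regions $E(\pi,t)$, $E(\pi_1,t)$, and $E(\pi_2,u)$ under the split. Once these are in place the conclusion is immediate from Fubini and~\eqref{eq:Lambdapi}.
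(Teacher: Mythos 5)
Your proof is correct, and it takes a genuinely different route from the paper. The paper does not attack the internal-interval case head-on; instead it reduces the claim, via the rotation identity of Lemma~\ref{lem:rotate}, to the simpler case where the two intervals are adjacent at the start (Lemma~\ref{lem:twointervals}): one first observes it suffices to compare $\tau(\Lambda_\pi(g_1,\ldots,g_{n-1})g_n)$ for arbitrary $g_n$, rotates $x$ places to get $\ell_n^x(\pi)=\pi_2\oplus\ell_{n-y}^x(\pi_1)$, applies Lemma~\ref{lem:twointervals}, and rotates back. You instead apply Fubini directly to the defining integral~\eqref{eq:Lambdapi}, which requires the extra bookkeeping you flagged: the constancy $I_\pi(p)\cap J_{\pit_1}=I_\pi(x)=I_\pi(x+y)$ for $x<p<x+y$, the consequent identity $s_x=s_{x+y}=u$, and the splitting of the constraint region into $E(\pi_1,t)$ for the outer variables and $E(\pi_2,u)$ for the inner ones. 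I checked this bookkeeping and it is right; the reduction of the internal constraints to $E(\pi_2,u)$ and of the boundary constraints to those of $E(\pi_1,t)$ under $\phi_1$ is correct, and the constraint at $p=x+y$ (for which $\phi_1$ is formally undefined, a slight notational slip in your writeup) is harmlessly redundant because $s_{x+y}=s_x$. The trade-off: your argument is self-contained relative to the definition of $\Lambda_\pi$ but carries heavier index bookkeeping; the paper's argument outsources the bookkeeping to the Jacobian computation in Lemma~\ref{lem:rotate} and the simpler Fubini in Lemma~\ref{lem:twointervals}, at the cost of being less direct. Either presentation is acceptable.
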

\begin{proof}
Let $g_n\in C[0,1]$.
It will suffice to show
\begin{multline*}
\tau\big(\Lambda_\pi(g_1,\ldots,g_{n-1})g_n\big) \\
=\tau\big(\Lambda_{\pi_1}(g_1,\ldots,g_{x-1},g_x\Lambda_{\pi_2}(g_{x+1},\ldots,g_{x+y-1})g_{x+y},g_{x+y+1},\ldots,g_{n-1})g_n\big).
\end{multline*}
Applying Lemma~\ref{lem:rotate} $x$ times in succession, we get
\begin{equation}\label{eq:tauLnx}
\tau\big(\Lambda_\pi(g_1,\ldots,g_{n-1})g_n\big)
=\tau\big(\Lambda_{\ell_n^x(\pi)}(g_{x+1},g_{x+2},\ldots,g_n,g_1,g_2,\ldots,g_{x-1})g_x\big).
\end{equation}
The partition $\ell_n^x(\pi)$ obtained by rotating $\pi$ a total of $x$ times to the left is split by the invervals $\ell_n^x(S_2)=\{1,\ldots,y\}$
and $\ell_n^x(S_1)=\{y+1,y+2,\ldots,n\}$
and, in the notation introduced above Lemma~\ref{lem:twointervals},
\[
\ell_n^x(\pi)=\pi_2\oplus \ell_{n-y}^x(\pi_1).
\]
Applying Lemma~\ref{lem:twointervals}, we have
\begin{multline*}
\Lambda_{\ell_n^x(\pi)}(g_{x+1},g_{x+2},\ldots,g_n,g_1,g_2,\ldots,g_{x-1}) \\
=\Lambda_{\pi_2}(g_{x+1},\ldots,g_{x+y-1})g_{x+y}\Lambda_{\ell_{n-y}^x(\pi_1)}(g_{x+y+1},\ldots,g_n,g_1,\ldots,g_{x-1}).
\end{multline*}
Substituting into~\eqref{eq:tauLnx} and applying Lemma~\ref{lem:rotate} again $x$ times, we get
\begin{multline*}
\tau\big(\Lambda_\pi(g_1,\ldots,g_{n-1})g_n\big) \\
\begin{aligned}
&=\tau\big(\Lambda_{\ell_{n-y}^x(\pi_1)}(g_{x+y+1},\ldots,g_n,g_1,\ldots,g_{x-1})g_x\Lambda_{\pi_2}(g_{x+1},\ldots,g_{x+y-1})g_{x+y}\big) \\
&=\tau\big(\Lambda_{\pi_1}(g_1,\ldots,g_{x-1},g_x\Lambda_{\pi_2}(g_{x+1},\ldots,g_{x+y-1})g_{x+y},g_{x+y+1},\ldots,g_{n-1})g_n\big),
\end{aligned}
\end{multline*}
as required.
\end{proof}

The next lemma treats the case when a partition has two adjacent elements in the same block.
\begin{lemma}\label{lem:adjacent}
Let $\pi\in\Pc(n)$ and suppose $\{k,k+1\}\subseteq S\in\pi$ for some $k\in\{1,\ldots,n-1\}$.
Let $\pit\in\Pc(n-1)$ be obtained from $\pi$ by gluing $k$ and $k+1$ together;
namely, letting 
\[
F:\{1,\ldots,n\}\backslash\{k+1\}\to\{1,\ldots,n-1\}
\]
be the order-preserving bijection, we have
\[
\pit=\{F(S\backslash\{k+1\})\mid S\in\pi\}.
\]
Then
\[
\Lambda_\pi(g_1,\ldots,g_{n-1})=\Lambda_{\pit}(g_1,g_2,\ldots,\widehat{g_k},\ldots,g_{n-1})\tau(g_k),
\]
where, as usual, $\widehat{g_k}$ indicates that $g_k$ has been removed from the list of arguments, while all the others remain.
\end{lemma}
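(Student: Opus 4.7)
The strategy is to exploit the fact that, under the hypothesis $\{k,k+1\}\subseteq S$, the coordinate $s_k$ in the $K_\pi$-picture of Lemma~\ref{lem:LambdapiAlt} is linearly unconstrained, and to integrate it out, producing the factor $\tau(g_k)$. I would work via the parametrization $\Phi_\pi^{(t)}:E(\pi,t)\to F(\pi,t)$ and view $\Lambda_\pi(g_1,\ldots,g_{n-1})(t)$ as the integral of $\prod_{p=1}^{n-1}g_p(s_p)$ over $F(\pi,t)$.

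The key structural observation is that in the block equation $\sum_{p\in S}(s_p-s_{p-1})=0$ of $K_\pi$ corresponding to the block $S$, the two consecutive terms involving $s_k$ telescope as $(s_k-s_{k-1})+(s_{k+1}-s_k)=s_{k+1}-s_{k-1}$, so $s_k$ cancels; and $s_k$ does not appear in any equation coming from another block of $\pi$, since neither $k$ nor $k+1$ lies in a block other than $S$. A block-by-block check using this telescoping shows that once $s_k$ is dropped and the remaining coordinates are relabeled by the order-preserving bijection of $\{1,\ldots,n\}\setminus\{k\}$ with $\{1,\ldots,n-1\}$, the defining equations of $K_\pi$ coincide with those of $K_{\pit}$; together with the dimension count $\dim K_\pi=\dim K_{\pit}+1$, valid because $|\pit|=|\pi|$, this gives a linear surjection $K_\pi\twoheadrightarrow K_{\pit}$ with one-dimensional fibres in the $s_k$-direction.

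To execute this at the level of $E(\pi,t)$, I would make a measure-preserving linear change of variables, splitting into two cases by $m:=\max S$. If $m>k+1$ (so $k+1\in J_\pi$), I replace the pair $(t_k,t_{k+1})$ by $(v,u_k):=(t_k,\,t_k+t_{k+1})$ and leave the other variables alone, relabelling $t_j=u_{q(j)}$ via the natural order-preserving bijection of the remaining index sets. If $m=k+1$ (so $k+1\notin J_\pi$), I simply set $v:=t_k$ and relabel the remaining $t_j$ as $u_{q(j)}$. In both cases the Jacobian has absolute value~$1$, and under the substitution, for every $p\ne k$ the value $s_p$ is expressed purely in terms of the $u$-variables (using the telescoping $t_k+t_{k+1}=u_k$ when it applies), while $s_k=s_{k-1}+v$ depends on $v$ only through the additive parameter $v$, since $s_{k-1}$ involves only $t_j$ with $j\in I_\pi(k-1)\subseteq J_\pi\setminus\{k,k+1\}$. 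Consequently the constraints $0<s_p\le 1$ for $p\ne k$ become exactly the defining constraints of $E(\pit,t)$ written in the $u$-variables, while $0<s_k\le 1$ becomes $0<s_{k-1}+v\le 1$, a pure constraint on $v$.

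Performing the $v$-integration first, the affine change $s_k=s_{k-1}+v$ transforms it into $\int_0^1 g_k(s_k)\,ds_k=\tau(g_k)$, and the remaining integral in the $u$-variables is precisely $\Lambda_{\pit}(g_1,\ldots,\widehat{g_k},\ldots,g_{n-1})(t)$ by Fubini's theorem and the definition of $\Lambda_{\pit}$. The main technical obstacle is the two-case bookkeeping needed to verify that the change of variables really identifies $E(\pi,t)$ with $E(\pit,t)\times\{v:0<s_{k-1}+v\le 1\}$ and matches $\sum_{j\in I_\pi(p)}t_j$ term by term with $\sum_{j\in I_{\pit}(q(p))}u_j$ for $p\ne k$; once that is set up, the identity is immediate.
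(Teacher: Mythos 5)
Your proposal is correct, but it takes a genuinely different route from the paper's proof. The paper first establishes the lemma for $k=1$ — splitting into the subcases $m=2$ (handled via Lemmas~\ref{lem:1n} and~\ref{lem:twointervals} and the $\oplus$-decomposition) and $m>2$ (handled via a measure-preserving shear $r=t+t_1$, $u_1=t_1+t_2$, $u_j=t_{j+1}$) — and then reduces arbitrary $k>1$ to $k=1$ by a double application of the rotation identity of Lemma~\ref{lem:rotate}, after checking that gluing together $1$ and $2$ in $\ell_n^{k-1}(\pi)$ yields $\ell_{n-1}^{k-1}(\pit)$. You instead treat all $k$ at once, using the $K_\pi$-picture of Lemma~\ref{lem:LambdapiAlt} to motivate the key observation: the only two occurrences of $s_k$ in the defining equations of $K_\pi$ are $(s_k-s_{k-1})$ and $(s_{k+1}-s_k)$, both inside the one block equation for $S$, where they telescope; so $s_k$ is a free direction, and the corresponding unimodular change of variables $(t_k,t_{k+1})\mapsto(v,u_k)=(t_k,t_k+t_{k+1})$ (with the degenerate version when $m=k+1$) carries $E(\pi,t)$ to $E(\pit,t)$ fibred over an interval of length $1$ in $v$, after which $\int g_k(s_{k-1}+v)\,dv=\tau(g_k)$. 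Your dichotomy $m>k+1$ versus $m=k+1$ plays the role of the paper's $m>2$ versus $m=2$ for $k=1$, and the two proofs essentially coincide when $k=1$. What your approach buys is uniformity and independence from Lemmas~\ref{lem:1n}, \ref{lem:rotate}, and~\ref{lem:twointervals}; what the paper's approach buys is that the hard bookkeeping (matching $\sum_{j\in I_\pi(p)}t_j$ with $\sum_{j'\in I_{\pit}(p')}u_{j'}$ term by term) only has to be done in the simplest position $k=1$, with the rotation lemma absorbing the rest.
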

\begin{proof}
First, suppose $k=1$.
Let $m=\max S_\pi(1)$. 
Then $m\ge2$.
If $m=2$, then letting $\pi_2\in\Pc(n-2)$ be obtained by restricting $\pi$ to $\{3,\ldots,n\}$ and translating left by $2$, we have
\[
\pi=1_2\oplus\pi_2,\qquad\pit=1_1\oplus\pi_2.
\]
Applying Lemmas~\ref{lem:twointervals} and~\ref{lem:1n}, we have
\begin{align*}
\Lambda_\pi(g_1,\ldots,g_{n-1})
&=\Lambda_{1_2}(g_1)g_2\Lambda_{\pi_2}(g_3,\ldots,g_{n-1})=\tau(g_1)g_2\Lambda_{\pi_2}(g_3,\ldots,g_{n-1}) \\
&=\tau(g_1)\Lambda_{1_1}()\,g_2\Lambda_{\pi_2}(g_3,\ldots,g_{n-1})=\tau(g_1)\Lambda_{\pit}(g_2,\ldots,g_{n-1}).
\end{align*}
Now suppose $m>2$.
Then $\{1,2\}\subseteq J_\pi$ and
\[
J_\pit=\{j-1\mid j\in J_\pi\setminus\{1\}\}
\]
and
\[
I_\pi(p)=\begin{cases}
\{1\},&p=1 \\
\{1,2\}&p=2 \\
\{1,2\}\cup\{j+1\mid j\in I_\pit(p-1)\setminus\{1\}\},& 3\le p<m \\
\{j+1\mid j\in I_\pit(p-1)\},& m\le p<n.
\end{cases}
\]
Moreover, $I_\pi(p)\cap\{1,2\}=\emptyset$ whenever $p\ge m$
and $1\in I_\pit(p-1)$ if and only if $2\le p<m$.
Thus,
\[
E(\pi,t)=\bigg\{\big(t_1,t_2,(t_{j+1})_{j\in J_\pit\setminus\{1\}}\big)\;\bigg|\;
\begin{aligned}[t]
&0<t+t_1\le1,\;0<t+t_1+t_2\le1, \\[1ex]
&\forall p\in\{3,\ldots,m-1\}, \\
&\qquad0<t+t_1+t_2+\sum_{j\in I_\pit(p-1)\setminus\{1\}}t_{j+1}\le1, \\
&\forall p\in\{m,\ldots,n-1\},\;0<t+\sum_{j\in I_\pit(p-1)}t_{j+1}\le1\bigg\},
\end{aligned}
\]
whereas
\[
E(\pit,t)=\bigg\{\big(u_1,(u_j)_{j\in J_\pit\setminus\{1\}}\big)\;\bigg|\;
\begin{aligned}[t]
&0<t+u_1\le1, \\[1ex]
&\forall p\in\{3,\ldots,m-1\}, \\
&\qquad0<t+u_1+\sum_{j\in I_\pit(p-1)\setminus\{1\}}u_j\le1, \\
&\forall p\in\{m,\ldots,n-1\},\;0<t+\sum_{j\in I_\pit(p-1)}u_j\le1\bigg\}.
\end{aligned}
\]
The affine mapping $\Reals^{J_\pi}\to\Reals\times\Reals^{J_\pit}$ given by
\[
\big(t_1,t_2,(t_{j+1})_{j\in J_\pit\setminus\{1\}}\big)\mapsto\big(t+t_1,\big(t_1+t_2,(t_j)_{j\in J_\pit\setminus\{1\}}\big)\big)
\]
preserves Lebesgue measure and maps $E(\pi,t)$ onto $(0,1]\times E(\pit,t)$.
Thus, making the change of variables
\[
r=t+t_1,\qquad u_1=t_1+t_2,\qquad u_j=t_{j+1},\quad(j\in J_\pit\setminus\{1\}),
\]
we have
\begin{align*}
\Lambda_\pi(g_1,\ldots,g_{n-1})&=\int_{E(\pi,t)}
\begin{aligned}[t]
&g_1(t+t_1)g_2(t+t_1+t_2) \\
&\quad\cdot\bigg(\prod_{p=3}^{m-1}g_p\bigg(t+t_1+t_2+\sum_{j\in J_\pit(p-1)\setminus\{1\}}t_{j+1}\bigg)\bigg) \\
&\quad\cdot\bigg(\prod_{p=m}^{n-1}g_p\bigg(t+\sum_{j\in J_\pit(p-1)}t_{j+1}\bigg)\bigg)\,d\lambda\big(t_1,t_2,(t_{j+1})_{j\in J_\pit\setminus\{1\}}\big)
\end{aligned}\\
&=\bigg(\int_0^1g_1(r)\,dr\bigg) \\
&\quad\cdot\int_{E(\pit,t)}
\begin{aligned}[t]
&g_2(t+u_1)\bigg(\prod_{p=3}^{m-1}g_p(t+u_1+\sum_{j\in J_\pit(p-1)\setminus\{1\}}u_j\bigg)\bigg) \\
&\qquad\cdot\bigg(\prod_{p=m}^{n-1}g_p\bigg(t+\sum_{j\in J_\pit(p-1)}u_j\bigg)\bigg)\,d\lambda\big(u_1,(u_j)_{j\in J_\pit\setminus\{1\}}\big)
\end{aligned} \\
&=\tau(g_1)\Lambda_{\pit}(g_2,\ldots,g_{n-1}).
\end{align*}
This proves the result in the case $k=1$.

Suppose $k>1$.
For any $g_n\in C[0,1]$, we will show
\[
\tau\big(\Lambda_\pi(g_1,\ldots,g_{n-1})g_n\big)=\tau\big(\Lambda_{\pit}(g_1,g_2,\ldots,\widehat{g_k},\ldots,g_{n-1})g_n\big)\tau(g_k),
\]
which will finish the proof.
We will rotate and appeal to the case just proved.
Indeed, the partition obtained from $\ell_n^{k-1}(\pi)$ be gluing together $1$ and $2$ is just $\ell_{n-1}^{k-1}(\pit)$.
By Lemma~\ref{lem:rotate} and the case just proved, we have
\begin{align*}
\tau\big(\Lambda_\pi(g_1,\ldots,g_{n-1})g_n\big)&=\tau\big(\Lambda_{\ell_n^{k-1}(\pi)}(g_k,g_{k+1},\ldots,g_n,g_1,\ldots,g_{k-2})g_{k-1}\big) \\
&=\tau(g_k)\tau\big(\Lambda_{\ell_{n-1}^{k-1}(\pit)}(g_{k+1},\ldots,g_n,g_1,\ldots,g_{k-2})g_{k-1}\big) \\
&=\tau(g_k)\tau\big(\Lambda_{\pit}(g_1,\ldots,g_{k-1},g_{k+1},\ldots,g_{n-1})g_n\big),
\end{align*}
as required.
\end{proof}

\begin{lemma}\label{lem:1andn}
Let $n\ge2$ and suppose $\pi\in\Pc(n)$ has $1$ and $n$ in the same block.
Then for all $g_1,\ldots,g_{n-1}\in C[0,1]$, $\Lambda_\pi(g_1,g_2,\ldots,g_{n-1})$ is a constant function.
Moreover, 
letting $\pit\in\Pc(n-1)$ be the restriction of $\pi$ to $\{1,\ldots,n-1\}$, 
we have
\[
\Lambda_\pi(g_1,\ldots,g_{n-1})=\tau\big(\Lambda_\pit(g_1,\ldots,g_{n-2})g_{n-1}\big).
\]
\end{lemma}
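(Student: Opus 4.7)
The plan is to use rotation to move the block containing $1$ and $n$ into a block containing the linearly adjacent positions $n-1$ and $n$, apply Lemma~\ref{lem:adjacent} to collapse them, and then rotate back down to $\pit$. First I would invoke Lemma~\ref{lem:rotate} to write
\[
\tau\bigl(\Lambda_\pi(g_1,\ldots,g_{n-1})g_n\bigr) = \tau\bigl(\Lambda_{\ell_n(\pi)}(g_2,\ldots,g_n)g_1\bigr).
\]
Since $\ell_n(1)=n$ and $\ell_n(n)=n-1$, the hypothesis $1\stackrel{\pi}{\sim}n$ translates to $n-1\stackrel{\ell_n(\pi)}{\sim}n$, and Lemma~\ref{lem:adjacent} applied to $\ell_n(\pi)$ with $k=n-1$ gives
\[
\Lambda_{\ell_n(\pi)}(g_2,\ldots,g_n) = \Lambda_{\widetilde{\ell_n(\pi)}}(g_2,\ldots,g_{n-1})\,\tau(g_n),
\]
where $\widetilde{\ell_n(\pi)}\in\Pc(n-1)$ is obtained from $\ell_n(\pi)$ by deleting $n$ from its block.

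The main combinatorial step is to verify the identity $\widetilde{\ell_n(\pi)}=\ell_{n-1}(\pit)$. I would check this by unpacking both sides block by block: the block of $\ell_n(\pi)$ containing $n$ is $\{n\}\cup\{j-1:j\in S_\pi(1)\setminus\{1\}\}$, and deleting $n$ leaves $\{j-1:j\in S_\pi(1)\setminus\{1\}\}$; on the other hand, $S_\pit(1)=S_\pi(1)\setminus\{n\}$ contains $1$, and $\ell_{n-1}$ sends $1$ to $n-1$ and decrements otherwise, producing the same set. All other blocks of $\pi$ are contained in $\{2,\ldots,n-1\}$, so $\ell_n$ and $\ell_{n-1}$ act on them identically. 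Granting this identification, a second application of Lemma~\ref{lem:rotate}, now to $\pit\in\Pc(n-1)$, yields
\[
\tau\bigl(\Lambda_{\ell_{n-1}(\pit)}(g_2,\ldots,g_{n-1})g_1\bigr) = \tau\bigl(\Lambda_{\pit}(g_1,\ldots,g_{n-2})g_{n-1}\bigr).
\]

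Chaining the three displayed identities gives
\[
\tau\bigl(\Lambda_\pi(g_1,\ldots,g_{n-1})g_n\bigr) = \tau\bigl(\Lambda_{\pit}(g_1,\ldots,g_{n-2})g_{n-1}\bigr)\,\tau(g_n)
\]
for every $g_n\in C[0,1]$. Since $\Lambda_\pi(g_1,\ldots,g_{n-1})$ is continuous on $[0,1]$ and this identity shows that integrating it against any continuous $g_n$ produces the same value as integrating the constant $\tau(\Lambda_{\pit}(g_1,\ldots,g_{n-2})g_{n-1})$ against $g_n$, continuity forces $\Lambda_\pi(g_1,\ldots,g_{n-1})$ to be that constant function everywhere on $[0,1]$, which gives both conclusions of the lemma. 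The only step that is not formal manipulation of previously proved lemmas is the combinatorial identity $\widetilde{\ell_n(\pi)}=\ell_{n-1}(\pit)$; once that is in hand, the rest is assembly.
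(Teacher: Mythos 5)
Your proof is correct and takes essentially the same approach as the paper: rotate so that the block joining $1$ and $n$ instead joins linearly adjacent positions, apply Lemma~\ref{lem:adjacent}, and conclude via the density argument that $\Lambda_\pi$ is the indicated constant. The paper rotates once in the opposite direction (putting the adjacency at positions $1,2$ rather than $n-1,n$), which lands directly on $\pit$ after gluing and saves the second application of Lemma~\ref{lem:rotate}, but this is only a minor packaging difference.
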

\begin{proof}
It will suffice to show that, for every $g_n\in C[0,1]$, we have
\[
\tau\big(\Lambda_\pi(g_1,\ldots,g_{n-1})g_n\big)
=\tau\big(\Lambda_\pit(g_1,\ldots,g_{n-2})g_{n-1}\big)\tau(g_n).
\]
Let $\sigma\in\Pc(n)$ be obtained from $\pi$ by right rotating, so that $\pi=\ell_n(\sigma)$.
Then $\{1,2\}\subseteq S_\sigma(1)$ and $\pit$ equals the partition obtained from $\sigma$ by gluing together $1$ and $2$.
Using Lemma~\ref{lem:rotate} and Lemma~\ref{lem:adjacent}, we have
\begin{align*}
\tau\big(\Lambda_\pi(g_1,\ldots,g_{n-1})g_n\big)
&=\tau\big(\Lambda_{\ell_n(\sigma)}(g_1,\ldots,g_{n-1})g_n\big)
=\tau(\Lambda_\sigma(g_n,g_1,\ldots,g_{n-2})g_{n-1}\big) \\
&=\tau(g_n)\tau\big(\Lambda_\pit(g_1,\ldots,g_{n-2})g_{n-1}\big).
\end{align*}
\end{proof}

\medskip
The following is easily checked:
\begin{lemma}\label{lem:piGam}
Suppose $\pi$ is any partition of $\{1,\ldots,n\}$.
Then
\[
\Gamma_\pi(1,\ldots,1)=1.
\]
\end{lemma}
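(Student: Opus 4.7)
The proof is immediate from unpacking the definition of $\Gamma_\pi$. Recall that
\[
\Gamma_\pi(g_1,\ldots,g_n)=\prod_{j\in S_\pi(1)}g_j\,\prod_{S\in\pi\setminus\{S_\pi(1)\}}\tau\!\left(\prod_{j\in S}g_j\right).
\]
Substituting $g_j=1$ for every $j\in\{1,\ldots,n\}$, the first factor becomes a product of constant functions equal to $1$, hence equals the constant function $1$. For each remaining block $S\in\pi\setminus\{S_\pi(1)\}$, the inner product $\prod_{j\in S}g_j$ is again the constant function $1$, and $\tau(1)=1$ since $\tau$ is a state (or, explicitly, integration of the constant $1$ against Lebesgue measure on $[0,1]$ yields $1$). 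Therefore every factor in the outer product is $1$, and the product is $1$.

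There is no real obstacle here; the only thing to verify is that each of the three products in the definition reduces trivially when all arguments equal the unit of $C[0,1]$, and this is immediate. One could write the proof as a single line, but for completeness one might note that the formula $\tau\big(\Gamma_\pi(1,\ldots,1)\big)=\prod_{S\in\pi}\tau(1)=1$ provides a quick consistency check with the identity displayed just after the definition of $\Gamma_\pi$.
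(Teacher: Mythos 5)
Your proof is correct and matches the paper's intent exactly: the paper introduces this lemma with the phrase ``The following is easily checked,'' offering no explicit argument, and your unpacking of the definition of $\Gamma_\pi$ with $g_j=1$ and $\tau(1)=1$ is precisely the routine verification being alluded to.
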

Here is an immediate consequence of the above fact and~\eqref{eq:EXX*}.
\begin{prop}\label{prop:scalarMoments}
For any $n\in\Nats$ and any $b_1,\ldots,b_n\in B$,
\[
\Ec(Xb_1X^*Xb_2X^*\cdots Xb_nX^*)\in\Cpx1.
\]
\end{prop}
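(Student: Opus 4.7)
\medskip
\noindent
\textbf{Proof proposal.}
The plan is to specialize formula~\eqref{eq:EXX*} to the case where every element of $B$ appearing in an odd-indexed slot is the unit $1\in B$, and then invoke Lemma~\ref{lem:piGam} to eliminate the $\Gamma_\pi$ factor.

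More concretely, I would begin by matching the expression $\Ec(Xb_1X^*Xb_2X^*\cdots Xb_nX^*)$ with the left-hand-side of~\eqref{eq:EXX*}: namely, writing $\hat b_1,\ldots,\hat b_n$ for the elements appearing in the proposition, substitute $b_{2k-1}=1$ and $b_{2k}=\hat b_k$ for $1\le k\le n$ in~\eqref{eq:EXX*}. This gives
\[
\Ec(X\hat b_1X^*X\hat b_2X^*\cdots X\hat b_nX^*)
=\sum_{\pi\in\Pc(n)}\Gamma_\pi(1,1,\ldots,1)\,\tau\big(\Lambda_\pi(\hat b_1,\hat b_2,\ldots,\hat b_{n-1})\hat b_n\big).
\]

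By Lemma~\ref{lem:piGam}, $\Gamma_\pi(1,1,\ldots,1)=1$ for every $\pi\in\Pc(n)$. Therefore the right-hand-side reduces to
\[
\sum_{\pi\in\Pc(n)}\tau\big(\Lambda_\pi(\hat b_1,\hat b_2,\ldots,\hat b_{n-1})\hat b_n\big),
\]
which is a finite sum of scalars, hence an element of $\Cpx1$. There is really no obstacle here once the correct substitution is identified; the entire content of the proposition is that Lemma~\ref{lem:piGam} forces the $B$-valued factor $\Gamma_\pi(1,\ldots,1)$ in~\eqref{eq:EXX*} to become trivial, leaving only the scalar trace factor.
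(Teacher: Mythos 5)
Your proof is correct and is exactly the argument the paper has in mind: the paper states this proposition as an immediate consequence of Lemma~\ref{lem:piGam} together with~\eqref{eq:EXX*}, and your substitution of $1$ into the odd slots followed by $\Gamma_\pi(1,\ldots,1)=1$ spells out precisely that deduction.
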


From Lemmas \ref{lem:1n}-\ref{lem:1andn}, we easily get the following:
\begin{lemma}\label{lem:ncLam}
Suppose $\pi$ is a noncrossing partition of $\{1,\ldots,n\}$.
Then
\[
\Lambda_\pi(1,1,\ldots,1)=1.
\]
\end{lemma}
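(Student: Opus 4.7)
My plan is to induct on $n$, using the fact that every noncrossing partition admits a structural decomposition handled by one of the preceding lemmas. The base case $n=1$ is immediate: the only partition is $0_1 = 1_1$, and $\Lambda_\pi() = 1$ by the convention in the definition of $\Lambda_\pi$.

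For the inductive step with $n \geq 2$, I would invoke the standard combinatorial fact that any noncrossing partition $\pi \in \Pc(n)$ contains at least one interval block $S = \{x+1, \ldots, x+y\}$ (indeed, any ``innermost'' block of $\pi$ is automatically an interval). I would then split into three cases based on where $S$ sits inside $\{1,\ldots,n\}$. First, if $S = \{1,\ldots,n\}$, then $\pi = 1_n$ and Lemma~\ref{lem:1n} gives $\Lambda_\pi(1,\ldots,1) = \prod_{p=1}^{n-1}\tau(1) = 1$. Second, if $S$ is a proper block at one end of $\{1,\ldots,n\}$ --- say $S = \{1,\ldots,y\}$ with $y < n$ --- then $\pi = 1_y \oplus \pi_2$ with $\pi_2$ noncrossing, and Lemma~\ref{lem:twointervals} yields
\[
\Lambda_\pi(1,\ldots,1) = \Lambda_{1_y}(1,\ldots,1)\cdot 1\cdot\Lambda_{\pi_2}(1,\ldots,1) = 1,
\]
where the first factor is $1$ by the previous case (or directly when $y = 1$) and the second by induction; the case $S = \{n-y+1,\ldots,n\}$ is symmetric. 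Third, if $S$ is internal ($x \geq 1$ and $x+y \leq n-1$), Lemma~\ref{lem:internalinterval} applies with $\pi_2 = 1_y$; since $\Lambda_{1_y}(1,\ldots,1) = 1$, the expression collapses to $\Lambda_{\pi_1}(1,\ldots,1)$, where $\pi_1$ is the noncrossing partition of $\{1,\ldots,n-y\}$ obtained from $\pi$ by excising $S$, and induction closes the case.

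The only non-mechanical ingredients are the two standard combinatorial observations: that a noncrossing partition of $\{1,\ldots,n\}$ always possesses an interval block, and that deleting or restricting to an interval block preserves noncrossing-ness. Both are well known. No step is a genuine obstacle; the content of the lemma is essentially that Lemmas~\ref{lem:1n}, \ref{lem:twointervals}, and \ref{lem:internalinterval} are precisely the operations that transport the identity $\Lambda_\pi(1,\ldots,1) = 1$ down through the recursive structure of noncrossing partitions. (Alternatively, one can run the induction via Lemma~\ref{lem:adjacent} combined with Lemma~\ref{lem:internalinterval} applied to singleton blocks, reducing $n$ by one in each step; the interval-block approach above is slightly cleaner.)
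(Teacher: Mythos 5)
The paper gives no explicit proof of Lemma~\ref{lem:ncLam}, stating only that it ``easily'' follows from Lemmas~\ref{lem:1n}--\ref{lem:1andn}; your argument is a correct and natural way to fill in that assertion. Your induction on $n$ via the existence of an interval block in a noncrossing partition, dispatching the three cases (whole set, end interval, internal interval) through Lemmas~\ref{lem:1n}, \ref{lem:twointervals}, and \ref{lem:internalinterval} respectively, is sound: the hypotheses of those lemmas require only that $\pi$ splits along the indicated intervals, which the interval block provides, and both the restriction of a noncrossing partition to a subinterval and the excision of an interval block preserve noncrossingness, so the inductive hypotheses apply. The ``symmetric'' end case $S=\{n-y+1,\ldots,n\}$ is indeed handled by $\pi=\pi_1\oplus 1_y$ just as the other end, and the convention $\Lambda_{1_1}()=1$ covers the degenerate $y=1$ situation. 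This matches the selection of tools the paper intends, even though you do not need Lemmas~\ref{lem:rotate}, \ref{lem:adjacent}, or \ref{lem:1andn} in your particular decomposition.
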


Since all partitions of $\{1,2,3\}$ are noncrossing, from \eqref{eq:EX*X}-\eqref{eq:EXX*} we easily get:
\begin{gather}
\Ec(XX^*)=\Ec(X^*X)=1, \label{eq:EXX*1} \\
\Ec\big((XX^*)^2\big)=\Ec\big((X^*X)^2\big)=2,  \label{eq:EXX*2} \\
\Ec\big((XX^*)^3\big)=\Ec\big((X^*X)^3\big)=5. \notag
\end{gather}

There are 14 noncrossing partitions of $\{1,2,3,4\}$ and one crossing partition, namely, $\pi_4=\{\{1,3\},\{2,4\}\}$.
We have
\[
E(\pi_4,t)=\{(t_1,t_2)\in\Reals^2\mid 0<t+t_1\le 1,\;0<t+t_1+t_2\le1,\;0<t+t_2\le1\}.
\]
Using the definition~\eqref{eq:Lambdapi} of $\Lambda_{\pi}$ and making a change of variables, we calculate, for $g_1,g_2,g_3\in C[0,1]$,
\begin{multline*}
\Lambda_{\pi_4}(g_1,g_2,g_3)(t)
=\int_{-t}^{1-t}g_1(t+t_1)\int_{\max(-t,-t-t_1)}^{\min(1-t,1-t-t_1)}g_2(t+t_1+t_2)g_3(t+t_2)\,dt_2\,dt_1 \\
=\int_0^tg_1(x)\int_0^{1-t+x}g_2(y)g_3(t-x+y)\,dy\,dx
+\int_t^1g_1(x)\int_{x-t}^1g_2(y)g_3(t-x+y)\,dy\,dx.
\end{multline*}
From this,
we calculate
\[
\Lambda_{\pi_4}(1,1,1)(t)=\frac12+t(1-t).
\]
Consequently, from~\eqref{eq:EX*X} and~\eqref{eq:EXX*}, we find
\begin{align}
\Ec\big((XX^*)^4\big)&=14+\frac23 \label{eq:EXX*4} \\[1ex]
\Ec\big((X^*X)^4\big)(t)&=14+\frac12+t(1-t). \label{eq:EX*X4}
\end{align}

Unlike with scalar-valued R-diagonality in the tracial setting,
in the $B$-valued case, $*$-freeness is not guaranteed in a polar decomposition.
This phenomenon was seen in~\cite{BD:rdiag}, but is also exhibited by the asymptotic limit of the random Vandermonde matrices:
\begin{prop}\label{prop:polarNotFree}
The element $X$ does not have the same $*$-distribution as any element in a $B$-valued
$*$-noncommutative probability space of the form $PU$, with $U$ unitary, $P\ge0$ and such that $U$ and $P$ are $*$-free
over $B$.
\end{prop}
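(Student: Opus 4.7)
The plan is to derive a contradiction by comparing the fourth moments $\Ec((XX^*)^4)$ and $\Ec((X^*X)^4)$ already computed in~\eqref{eq:EXX*4} and~\eqref{eq:EX*X4}: the former equals the constant $14+\frac{2}{3}$ on $[0,1]$, whereas the latter equals the non-constant function $14+\frac{1}{2}+t(1-t)$. I will show that if $X$ had the asserted polar form, then $\Ec((X^*X)^4)$ would be forced to equal $\Ec((XX^*)^4)$, hence scalar, contradicting this asymmetry.

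Suppose for contradiction that $X$ has the same $B$-valued $*$-distribution as $Y:=PU$, with $U$ unitary, $P\ge 0$, and $U$ and $P$ $*$-free over $B$. Using $UU^*=U^*U=1$, induction gives $(Y^*Y)^n=U^*P^{2n}U$ and $(YY^*)^n=P^{2n}$, so
\[
\Ec((X^*X)^n)=\Ec(U^*P^{2n}U), \qquad \Ec((XX^*)^n)=\Ec(P^{2n}).
\]
The key identity to prove is that, for every $b$ in the $B$-subalgebra generated by $P$,
\[
\Ec(U^*bU)=\Ec\bigl(U^*\Ec(b)\,U\bigr).
\]
To establish it, I would decompose $U=u+U^\circ$, $U^*=u^*+U^{*\circ}$, and $b=\Ec(b)+b^\circ$ into their $B$-valued and centered parts (with $u=\Ec(U)\in B$, $U^\circ=U-u$, etc.), expand $U^*bU$ into eight products, and apply $\Ec$. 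Terms containing an isolated centered factor $U^\circ$ or $U^{*\circ}$ adjacent to $B$-coefficients vanish by $B$-bimodularity of $\Ec$ together with $\Ec(U^\circ)=0=\Ec(U^{*\circ})$; terms in which $b^\circ$ is flanked by a centered factor from the $B$-subalgebra generated by $U$ and $U^*$ vanish by $B$-valued $*$-freeness; the remaining contributions collapse, after cancellation, to $\Ec(U^*\Ec(b)U)$.

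Specializing to $b=P^8$, the hypothesis and~\eqref{eq:EXX*4} yield $\Ec(b)=\Ec((XX^*)^4)=(14+\frac{2}{3})\cdot 1_B$, a scalar multiple of the unit of $B$. Hence $U^*\Ec(b)U=(14+\frac{2}{3})\,U^*U=14+\frac{2}{3}$, and therefore
\[
\Ec((X^*X)^4)=\Ec(U^*P^8U)=\Ec\bigl(U^*\Ec(P^8)\,U\bigr)=14+\tfrac{2}{3},
\]
a constant on $[0,1]$, contradicting~\eqref{eq:EX*X4}. The main technical obstacle is the freeness identity in the second paragraph: one must carefully track all eight expansion terms and distinguish between cancellation coming from bimodularity versus cancellation coming from freeness. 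Everything else follows immediately from computations already carried out in the paper.
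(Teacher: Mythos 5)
Your proposal is correct and follows essentially the same argument as the paper: compare $\Ec((XX^*)^4)$ and $\Ec((X^*X)^4)$, observe that a $*$-free polar factorization $PU$ would force $\Ec(U^*P^8U)=\Ec\bigl(U^*\Ec(P^8)U\bigr)$, and note that $\Ec(P^8)$ being the scalar $14+\tfrac23$ then makes $\Ec((X^*X)^4)$ constant, contradicting \eqref{eq:EX*X4}. The only difference is that you spell out the centering/freeness computation behind the identity $\Ec(U^*bU)=\Ec(U^*\Ec(b)U)$, which the paper simply invokes as a direct consequence of $*$-freeness.
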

\begin{proof}
Suppose for contradiction such a realization $X\sim PU$ is possible
for $P$ and $U$ in a $B$-valued $*$-noncommutative probability space $(\At,\Ect)$.

From \eqref{eq:EXX*4}-\eqref{eq:EX*X4},
we have
\begin{align*}
\Ect(P^8)&=14+\frac23 \\[1ex]
\Ect(U^*P^8U)(t)&=14+\frac12+t(1-t).
\end{align*}
However, by $*$-freeness, we calculate
\[
\Ect(U^*P^8U)=\Ect\big(U^*\Ect(P^8)U\big)=\Ect\big(U^*(14+\frac23)U\big)=14+\frac23,
\]
which is a contradiction.
\end{proof}

\begin{ques}
Can $X$ have the same $*$-distribution as a product $UP$
for some $U$ and $P$ as described in Proposition~\ref{prop:polarNotFree}?
\end{ques}

The next result answers negatively a question of G.\ Tucci.
\begin{prop}
With respect to the trace $\tau\circ\Ec$, $X$ is not a scalar-valued R-diagonal element.
\end{prop}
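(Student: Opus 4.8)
The plan is to assume, for contradiction, that $X$ is scalar-valued R-diagonal with respect to the trace $\phi:=\tau\circ\Ec$, and to contradict the explicit moments of Section~\ref{sec:altmoms}. First note that $\phi$ is a genuine tracial state (by Theorem~\ref{thm:XRDiag} it is a pointwise limit of the tracial states $\Eb\circ\tr$), so the structure theory applies: in a tracial setting a scalar R-diagonal element admits a polar decomposition $X\sim uh$ with $u$ a Haar unitary $*$-free from $h=(X^*X)^{1/2}$, equivalently its only nonvanishing $*$-cumulants are the alternating ones $\alpha_n=\kappa_{2n}(X,X^*,\dots,X,X^*)$. (The vanishing of odd alternating moments, part of R-diagonality, does hold for $X$, by Lemma~\ref{Odd}.) Under this hypothesis every $*$-moment of $X$ is determined by the law $\mu$ of $X^*X$; for instance, using $X(X^*X)^jX^*=(XX^*)^{j+1}$ and $*$-freeness, a word of the form $X(X^*X)^{j_1}X^*\,f_2\cdots X(X^*X)^{j_r}X^*\,f_{r}$ with each $f_i$ a word in $X^*X$ reduces to moments of $\mu$ and of the free multiplicative convolution $\mu\boxtimes\mu$.

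Next, the low-order cumulants are forced by what is already computed. Since every partition of $\{1,2,3\}$ is noncrossing, Lemma~\ref{lem:ncLam} and \eqref{eq:EXX*4} give $\phi((XX^*)^k)=1,2,5,14+\tfrac23$ for $k=1,2,3,4$; feeding these into the R-diagonal moment--cumulant formula (in which a partition contributes only if each of its blocks alternates between $X$ and $X^*$) yields successively $\alpha_1=1$, $\alpha_2=\alpha_3=0$, and $\alpha_4=\tfrac23$. Equivalently, the first four moments of $\mu$ are $\beta_1=1,\beta_2=2,\beta_3=5,\beta_4=\tfrac{44}{3}$, the first three Catalan and the fourth deviating from $C_4=14$ by $\tau\big(\Lambda_{\pi_4}(1,1,1)\big)=\tfrac12+\tfrac16=\tfrac23$, the contribution of the unique crossing partition $\pi_4=\{\{1,3\},\{2,4\}\}$. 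In particular every $*$-moment of $X$ of degree at most $8$ is now completely determined, and a (routine) check shows it agrees with $\phi$, so the contradiction must be sought at higher degree.

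Finally one produces the contradiction. The scalar R-diagonal hypothesis can only ``see'' $\tau$ of the element $\Ec((X^*X)^4)=14+\tfrac12+t(1-t)$, namely $\tfrac{44}{3}$, whereas the $*$-distribution of $X$ retains the non-constant function $\Lambda_{\pi_4}(1,1,1)(t)=\tfrac12+t(1-t)$ itself, and $\int_0^1\Lambda_{\pi_4}(1,1,1)(t)^2\,dt-\big(\int_0^1\Lambda_{\pi_4}(1,1,1)(t)\,dt\big)^2=\tfrac{9}{20}-\tfrac{4}{9}=\tfrac1{180}\neq0$. Concretely, take a $*$-word $w$ whose maximal alternating interval partition isolates two separated copies of $(X^*X)^4$ (for instance $w=(X^*X)^4\,XX^*\,(X^*X)^4\,XX^*$). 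On one hand, expanding $\phi(w)$ through the $B$-valued R-diagonality of Theorem~\ref{thm:XRDiag} and Proposition~\ref{OneProp} and evaluating the resulting alternating moments with the $\Lambda_\pi$-calculus of Lemmas~\ref{lem:1n}--\ref{lem:1andn} produces a value involving $\int_0^1\Lambda_{\pi_4}(1,1,1)(t)^2\,dt$; on the other hand, the R-diagonal recipe (the $\mu\boxtimes\mu$ formula, with $\alpha_1,\dots,\alpha_4$ as above) forces the same moment to involve instead $\big(\int_0^1\Lambda_{\pi_4}(1,1,1)(t)\,dt\big)^2$, all other contributions being identical; hence the two disagree. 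The genuine obstacle is the bookkeeping in this last step --- arranging the expansion so that the crossing contribution survives rather than cancels --- which is why part of the computation is carried out with computer assistance (the accompanying Mathematica file).
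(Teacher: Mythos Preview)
Your intuition is right: the obstruction is precisely the non-constancy of $\Ec((X^*X)^4)=14+\tfrac12+t(1-t)$, equivalently the variance $\tfrac1{180}$ of $\Lambda_{\pi_4}(1,1,1)$. But the last paragraph is not a proof --- you name a candidate word, assert that two expansions disagree, and then defer the verification to ``computer assistance.'' That deferred step is the entire content; without it the argument is only a heuristic. (Incidentally, the paper's Mathematica file is used for the cumulant tables at the end of Section~\ref{sec:calcs}, not for this proposition, which is proved by hand.)

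The paper's argument is shorter and avoids the bookkeeping you flagged as the obstacle. Rather than computing cumulants $\alpha_1,\dots,\alpha_4$ and then a degree-$20$ moment, it uses one clean consequence of scalar R-diagonality in a tracial setting: $XX^*$ and $X^*X$ are free. Hence the \emph{centered} alternating product
\[
\big((X^*X)^4-\tfrac{44}{3}\big)\big((XX^*)^2-2\big)\big((X^*X)^4-\tfrac{44}{3}\big)\big((XX^*)^2-2\big)
\]
must have trace zero. On the other hand, $B$-valued R-diagonality gives the analogous vanishing with $b(t)=14+\tfrac12+t(1-t)$ in place of $\tfrac{44}{3}$; writing $(X^*X)^4-\tfrac{44}{3}=((X^*X)^4-b)+(b-\tfrac{44}{3})$, expanding, and using \eqref{eq:EXX*2}--\eqref{eq:EXX*} once more reduces the trace to $\tfrac23\,\tau\big((b-\tfrac{44}{3})^2\big)=\tfrac23\cdot\tfrac{1}{180}=\tfrac{1}{270}\neq0$. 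The trick that makes this tractable --- and that your sketch is missing --- is to center \emph{both} factors, so that freeness gives vanishing immediately and the $B$-valued identity \eqref{eq:EXXpoly} cancels almost everything, leaving only the variance term. If you adopt this centering your word $w$ would work too, but at degree $12$ rather than $20$.
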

\begin{proof}
If it were scalar-valued R-diagonal, then,
because $XX^*$ and $X^*X$ would be free with respect to $\tau\circ\Ec$,
we would have
\begin{equation}\label{eq:tauEXXpoly}
\tau\circ\Ec\big(\big((X^*X)^4-\frac{44}3\big)\big((XX^*)^2-2\big)\big((X^*X)^4-\frac{44}3\big)((XX^*)^2-2\big)\big)=0.
\end{equation}
Letting $b\in B$
be $b(t)=14+\frac12+t(1-t)$, by $B$-valued R-diagonality of $X$ and~\eqref{eq:EX*X4} and~\eqref{eq:EXX*2}, we have
\begin{equation}\label{eq:EXXpoly}
\Ec\big(\big((X^*X)^4-b\big)\big((XX^*)^2-2\big)\big((X^*X)^4-b\big)\big((XX^*)^2-2\big)\big)=0.
\end{equation}
Writing $(X^*X)^4-\frac{44}3=((X^*X)^4-b)+(b-\frac{44}3)$, expanding, distributing, using~\eqref{eq:EXXpoly},
$B$-valued R-diagonality again and~\eqref{eq:EXX*2}, we get
\begin{align}
\Ec\big(\big((X^*X)^4-\frac{44}3\big)&\big((XX^*)^2-2\big)\big((X^*X)^4-\frac{44}3\big)\big((XX^*)^2-2\big)\big) \notag \\
&=\Ec\big((b-\frac{44}3)\big((XX^*)^2-2\big)(b-\frac{44}3)\big((XX^*)^2-2\big)\big) \notag \\
&=(b-\frac{44}3)\left(\Ec\big((XX^*)^2(b-\frac{44}3)(XX^*)^2\big)-4(b-\frac{44}3)\right). \label{eq:Eprod}
\end{align}
Using~\eqref{eq:EXX*} we find that for $b'\in B$,
\[
\Ec\big((XX^*)^2b'(XX^*)^2\big)
=\sum_{\pi\in\Pc(4)}\Gamma_\pi(1,1,b',1)\tau\big(\Lambda_\pi(1,1,1)\big)
=10\tau(b')+(4+\frac23)b'
\]
and, thus, that the quantity~\eqref{eq:Eprod} equals $\frac23(b-\frac{44}3)^2$.
But
\[
\frac23\tau\big((b-\frac{44}3)^2\big)=\frac23\int_0^1((t(1-t)-\frac16)^2\,dt=\frac1{270}\ne0,
\]
which shows that~\eqref{eq:tauEXXpoly} fails to hold.
\end{proof}

\medskip
We conclude this paper with a report of calculations of some of the 
$C[0,1]$-valued cumulant maps of the asymptotic $*$-distribution
of random Vandermonde matrices, namely, of the $C[0,1]$-valued distribution $\Ec$ from Theorem~\ref{thm:XRDiag}.
The details of these calculations are either straightforward to work out or can be found in the Mathematica~\cite{W15}
Notebook that is available with this paper.
Let $\alpha$ denote these cumulant maps, and for brevity let
\[
\alpha^{(1)}_k=\alpha_{(\underset{2k}{\underbrace{\scriptstyle 1,2,\ldots,1,2}})},\qquad\alpha^{(2)}_k=\alpha_{(\underset{2k}{\underbrace{\scriptstyle 2,1,\ldots,2,1}})}
\]
be those that need not, by virtue of R-diagonality, be zero.

We will use the following notion.
\begin{defi}\label{defi:PC}
For $n\in\Nats$, a partition $\pi\in\Pc(n)$ is said to be {\em purely crossing} if
\begin{enumerate}[label=(\alph*),leftmargin=20pt]
\item\label{it:nosplit} no proper subinterval $\{p+1,p+2,\ldots,p+q\}$ of $\{1,\ldots,n\}$ splits $\pi$
(by proper subinterval we mean with $0\le p<p+q\le n$ and $q<n$)
\item\label{it:noneigh} no block of $\pi$ contains neighbors (modulo $n$), namely, $k\overset{\pi}{\not\sim}k+1$
for all $k\in\{1,\ldots,n-1\}$ and  $1\overset{\pi}{\not\sim}n$.
\end{enumerate}
We let $\PC(n)$ denote the set of all purely crossing partitions of $\{1,\ldots,n\}$.
\end{defi}
Note that condition~\ref{it:nosplit} implies that $\pi$ has no singleton blocks.
It is easy to check that $\PC(n)$ is empty when $n\in\{1,2,3,5\}$,
and that $\PC(4)=\{\pi_4\}$, where $\pi_4=\{\{1,3\},\{2,4\}\}$.
The purely crossing projections and related quantities are studied further in~\cite{D}.

\begin{prop}
We have
\[
\alpha^{(1)}_1(b_1)=\alpha^{(2)}_1(b_1)=\tau(b_1)1.
\]
For $n\in\{2,3,4,5,6,7\}$, we have
\begin{align*}
\alpha^{(1)}_n(b_1,\ldots,b_{2n-1})&=\sum_{\pi\in\PC(n)}\Gamma_{\pi}(1,b_2,b_4,\ldots,b_{2n-2})
\tau\big(\Lambda_{\pi}(b_1,b_3,\ldots,b_{2n-3})b_{2n-1}\big), \\
\alpha^{(2)}_n(b_1,\ldots,b_{2n-1})&=\sum_{\pi\in\PC(n)}\tau\big(\Gamma_{\pi}(b_1,b_3,\ldots,b_{2n-1})\big)\Lambda_{\pi}(b_2,b_4,\ldots,b_{2n-2}).
\end{align*}
In particular, $\alpha^{(1)}_n$ and $\alpha^{(2)}_n$ vanish when $n\in\{2,3,5\}$.
However, this pattern breaks with $n=8$, for we have
\begin{align*}
\alpha^{(1)}_8(b_1,\ldots,b_{15})
&=\sum_{\pi\in\PC(8)}\Gamma_{\pi}(1,b_2,b_4,\ldots,b_{14})
\tau\big(\Lambda_{\pi}(b_1,b_3,\ldots,b_{13})b_{15}\big) \\
&\hspace{2em}
- \tau(b_{2} b_{6} ) b_{4} b_{8} \tau(b_{10} b_{14} ) b_{12}
  \tau(\Lambda_{\pi_4}(b_{1} , b_{3} , b_{5} )  b_{7} )
  \tau(\Lambda_{\pi_4}(b_{9} , b_{11} , b_{13} )  b_{15}) \\
&\hspace{2em}
- \tau(b_{2} b_{6} b_{10} b_{14} ) b_{4} \tau(b_{8} b_{12} )
  \tau( \Lambda_{\pi_4}(b_{1} , b_{3} , b_{5} ) b_{15} )
  \tau( \Lambda_{\pi_4}(b_{7} , b_{9} , b_{11} )  b_{13} )\\
&\hspace{2em}
- \tau( b_{2} b_{14} ) b_{4} b_{8}  \tau(b_{6} b_{10} ) b_{12}
   \tau(\Lambda_{\pi_4}(b_{1} , b_{3} , b_{13} )   b_{15} )
   \tau(\Lambda_{\pi_4}(b_{5} , b_{7} , b_{9} )  b_{11} ) \\
&\hspace{2em}
-\tau(b_{2} b_{6} b_{10} b_{14} ) \tau(b_{4} b_{8} ) b_{12} 
  \tau(\Lambda_{\pi_4}(b_{1} , b_{11} , b_{13} )   b_{15} )
  \tau( \Lambda_{\pi_4}(b_{3} , b_{5} , b_{7} )   b_{9} ), \\
\alpha^{(2)}_8(b_1,\ldots,b_{15})
&=\sum_{\pi\in\PC(8)}\tau\big(\Gamma_{\pi}(b_1,b_3,\ldots,b_{15})\big)\Lambda_{\pi}(b_2,b_4,\ldots,b_{14}) \\
&\hspace{2em}
- \tau(b_{1} b_{5} b_{9} b_{13} )  \tau( b_{3} b_{7}) \tau(b_{11} b_{15})
   \tau(\Lambda_{\pi_4}(b_{2}, b_{4}, b_{6}) b_{8} ) \Lambda_{\pi_4}(b_{10}, b_{12}, b_{14}) \\
&\hspace{2em}
- \tau(b_{1} b_{13}) \tau( b_{3} b_{7} b_{11} b_{15}) \tau(b_{5} b_{9})
   \Lambda_{\pi_4}(b_{2}, b_{12}, b_{14}) \tau(\Lambda_{\pi_4}(b_{4}, b_{6}, b_{8}) b_{10}) \\
&\hspace{2em}
- \tau(b_{1} b_{5} b_{9} b_{13}) \tau(b_{3}b_{15} ) \tau(  b_{7}b_{11})
  \Lambda_{\pi_4}(b_{2}, b_{4}, b_{14}) \tau(\Lambda_{\pi_4}(b_{6}, b_{8}, b_{10}) b_{12} ) \\
&\hspace{2em}
-\tau(b_{1} b_{5}) \tau(b_{3} b_{7} b_{11} b_{15}) \tau(b_{9}b_{13})
  \Lambda_{\pi_4}(b_{2}, b_{4}, b_{6}) \tau(\Lambda_{\pi_4}(b_{8}, b_{10}, b_{12}) b_{14} ).
\end{align*}
\end{prop}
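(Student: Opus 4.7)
The plan is to invert the $B$-valued moment formulas \eqref{eq:EX*X} and \eqref{eq:EXX*} via the operator-valued moment-cumulant relation. By $B$-valued R-diagonality (Definition~\ref{def:rdiag}) together with Lemma~\ref{Odd}, the only cumulants of $X$ that can be nonzero are the even alternating ones $\alpha^{(1)}_k$ and $\alpha^{(2)}_k$; thus each even alternating $2n$-moment of $X$ equals a sum, over noncrossing partitions $\sigma$ of $\{1,\ldots,2n\}$ whose blocks are alternating with respect to the pattern of $X$'s and $X^*$'s, of nested products $\kappa_\sigma^B$ of these cumulants. The base case $n=1$ is immediate: since $\Ec(X)=\Ec(X^*)=0$, we have $\alpha^{(1)}_1(b)=\Ec(XbX^*)$, and applying \eqref{eq:EXX*} with $n=1$ to $\Ec(XbX^*)=\Ec(1\cdot X\cdot b\cdot X^*)$ yields $\alpha^{(1)}_1(b)=\tau(b)\cdot 1$ (symmetrically for $\alpha^{(2)}_1$).

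For $n\ge 2$, I would proceed by strong induction. Rewriting $\Ec(Xb_1X^*\cdots Xb_{2n-1}X^*)=\Ec(b'_1Xb'_2X^*\cdots b'_{2n-1}Xb'_{2n}X^*)$ with $b'_1=1$ and $b'_{i+1}=b_i$, the moment formula \eqref{eq:EXX*} gives
\[
\Ec(Xb_1X^*\cdots Xb_{2n-1}X^*)=\sum_{\pi\in\Pc(n)}\Gamma_\pi(1,b_2,\ldots,b_{2n-2})\,\tau\big(\Lambda_\pi(b_1,b_3,\ldots,b_{2n-3})\,b_{2n-1}\big),
\]
which is the proposed formula for $\alpha^{(1)}_n$ but with the summation extended from $\PC(n)$ to all of $\Pc(n)$. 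Independently, the moment-cumulant formula writes this moment as $\alpha^{(1)}_n(b_1,\ldots,b_{2n-1})$ plus the contributions from non-trivial noncrossing alternating $\sigma$. The task is to identify the latter with the sum over reducible $\pi\in\Pc(n)\setminus\PC(n)$ in the above display.

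To do so, I would expand each lower cumulant factor inside the nested products via the inductive hypothesis, turning each nested $\alpha^{(j)}_k$-factor into a sum over $\PC(k)$, and then use the reduction Lemmas~\ref{lem:twointervals}, \ref{lem:internalinterval}, \ref{lem:adjacent}, and \ref{lem:1andn} to reassemble the resulting nested $\Gamma$-$\Lambda$ products into $\Gamma_\pi$-$\Lambda_\pi$ factors for suitable reducible $\pi\in\Pc(n)$. The central combinatorial claim is a bijection between (a)~proper noncrossing alternating $\sigma\in NC(2n)$ equipped with a purely crossing \emph{decoration} inside each of its blocks and (b)~reducible $\pi\in\Pc(n)\setminus\PC(n)$, whose decomposition via the reduction lemmas exhibits purely crossing irreducible pieces sitting inside a noncrossing skeleton.

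The inductive hypothesis sharply restricts which decorations are possible: since $\alpha^{(j)}_k=0$ for $k\in\{2,3,5\}$, only $\alpha^{(j)}_1$-blocks and $\alpha^{(j)}_4$-blocks can contribute, and for $n\le 7$ a size-$2n$ noncrossing skeleton accommodates at most one $\alpha^{(j)}_4$-block; this is exactly why the proposed formula holds without correction in that range. For $n=8$ two $\alpha^{(j)}_4$-blocks can fit noncrossingly inside $\{1,\ldots,16\}$, and the four listed correction terms correspond to the four distinct noncrossing placements of a pair of $\pi_4$-decorations. The main obstacle is the combinatorial bookkeeping of this bijection and the matching between nested cumulant products and $\Gamma$-$\Lambda$ reassemblies; once set up, both the main formula for $n\in\{2,\ldots,7\}$ and the explicit $n=8$ correction follow mechanically.
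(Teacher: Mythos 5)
The paper does not actually prove this proposition; it explicitly defers to ``straightforward to work out'' computations or the accompanying Mathematica notebook, which simply invert the moment formulas \eqref{eq:EX*X}--\eqref{eq:EXX*} symbolically and report the result. Your proposal attempts something more ambitious --- a conceptual, bijective derivation --- and that would indeed be a genuinely different and potentially more illuminating route. As written, though, it has a real gap: the central bijection between ``decorated'' noncrossing alternating $\sigma\in NC(2n)$ (each block carrying an element of $\PC$) and reducible $\pi\in\Pc(n)\setminus\PC(n)$ is asserted, not constructed. One would need to show that iterating the reduction Lemmas~\ref{lem:twointervals}--\ref{lem:1andn} on a reducible $\pi$ produces a canonical noncrossing skeleton with purely crossing innards, that this decomposition is well-defined, and that it is inverse to expanding the nested cumulant products; none of this bookkeeping is carried out, so the claimed equality of the two sides for $n\le7$ is not established but merely conjectured.

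There is also a concrete error in the step where you explain why the pattern breaks at $n=8$. You claim that for $n\le7$ ``only $\alpha^{(j)}_1$-blocks and $\alpha^{(j)}_4$-blocks can contribute,'' but $\alpha^{(j)}_6$ and $\alpha^{(j)}_7$ are nonzero (for instance $\{\{1,3,5\},\{2,4,6\}\}\in\PC(6)$), and a size-$12$ block together with one pair block fits noncrossingly inside $\{1,\ldots,14\}$; similarly an $\alpha^{(j)}_7$-decorated block of size $14$ occurs in the $n=8$ expansion. These configurations must also be absorbed by the bijection, so the bare observation that two size-$8$ blocks first fit at $n=8$ does not explain why that is the unique source of failure, nor does it derive the four specific correction terms and their signs --- you are matching the stated answer rather than deriving it. The plan is worth pursuing, but at this level of detail it proves neither the clean $n\le7$ formula nor the $n=8$ corrections, and a careful reader would still need the symbolic verification the paper points to.
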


\end{document}